\numberwithin{equation}{section}
\newtheorem{theorem}[equation]{Theorem}
\newtheorem{lemma}[equation]{Lemma}
\newtheorem{cor}[equation]{Corollary}
\theoremstyle{remark}
\newtheorem{remark}[equation]{Remark}
\newtheorem{example}[equation]{Example}
\newcommand{\C}{\mathbf C}
\newcommand{\Hilb}{\mathcal H}
\newcommand{\poset}{partially ordered set}
\newcommand{\Sp}{\mathrm{Sp}}
\newcommand{\Z}{\mathbf Z}
\theoremstyle{defition}
\newtheorem{defn}[equation]{Definition}
\DeclareMathOperator{\Aut}{Aut}
\DeclareMathOperator{\End}{End}
\title[Weil representations]{Combinatorics of finite abelian groups\\and Weil representations}
\author{Kunal Dutta}
\address{KD: Max-Planck-Institut f\"ur Informatik\\
Algorithms and Complexity\\
Campus E1 4, Room 319\\
66123 Saarbr\"ucken\\
Germany.}
\author{Amritanshu Prasad}
\address{AP: The Institute of Mathematical Sciences\\CIT campus, Taramani\\ Chennai 600113, India.}
\keywords{Weil representation, Heisenberg group, Clifford group, finite abelian group}
\begin{document}
\begin{abstract}
  The Weil representation of the symplectic group associated to a finite abelian group of odd order is shown to have a multiplicity-free decomposition.
  When the abelian group is $p$-primary, the irreducible representations occurring in the Weil representation are parametrized by a partially ordered set which is independent of $p$.
  As $p$ varies, the dimension of the irreducible representation corresponding to each parameter is shown to be a polynomial in $p$ which is calculated explicitly.
  The commuting algebra of the Weil representation has a basis indexed by another \poset{} which is independent of $p$.
  The expansions of the projection operators onto the irreducible invariant subspaces in terms of this basis are calculated.
  The coefficients are again polynomials in $p$.
  These results remain valid in the more general setting of finitely generated torsion modules over a Dedekind domain.
\end{abstract}
\maketitle
\section{Introduction}
\subsection{Overview}
\label{sec:overview}
\footnote{2010 \emph{Mathematics Subject Classification.} Primary 11F27, 05E10. Secondary 81R05.}Heisenberg groups were introduced by Weyl \cite[Chapter~4]{Weyl50} in his mathematical formulation of quantum kinematics.
Best known among them are the Lie groups whose Lie algebras are spanned by position and momentum operators which satisfy Heisenberg's commutation relations.
Weyl also considered Heisenberg groups which are finite modulo centre,
such as the Pauli group (generated by the Pauli matrices), which he used to characterize the kinematics of electron spin.

A fundamental property of Heisenberg groups, predicted by Weyl and proved by Stone \cite{Stone30} and von Neumann \cite{vN31} for real Heisenberg groups is known as the Stone-von Neumann theorem.
Mackey \cite{Mackey49} extended this theorem to locally compact Heisenberg groups  (see Section~\ref{sec:basic-definitions} for the case that is pertinent to this paper, and \cite{SvN} for a more detailed and general exposition).
By considering Heisenberg groups associated to finite fields, local fields and ad\`eles, Weil \cite{MR0165033} demonstrated the importance of Heisenberg groups in number theory.

Weil exploited the Stone-von Neumann-Mackey theorem to construct a projective representation of a group of automorphisms of the Heisenberg group, now commonly known as the Weil representation.
Along with parabolic induction and the technique of Deligne and Lusztig \cite{MR0393266} using $l$-adic cohomology, the Weil representation is one of the most important techniques for constructing representations of reductive groups over finite fields (see G\'erardin \cite{Gerardin77} and Srinivasan \cite{MR528020}) or local fields (see G\'erardin \cite{MR0396859} and Moeglin-Vigneras-Waldspurger \cite{MR1041060}).

Tanaka \cite{MR0219635,MR0229737} showed how the Weil representation can be used to construct all the irreducible representations of $\mathrm{SL}_2(\Z/p^k\Z)$ for odd $p$ by looking at Weil representations associated to the abelian groups $\Z/p^k\Z\oplus \Z/p^l\Z$ for $l\leq k$.
However, most of the literature on Weil representations associated to finite abelian groups has focused on vector spaces over finite fields and on constructing representations of classical groups over finite fields.

The representation theory of groups over finite principal ideal local rings was initiated by Kloosterman \cite{MR0021032}, who studied $\mathrm{SL}_2(\Z/p^k\Z)$.
In contrast to general linear groups over finite fields, whose character theory was worked out by Green \cite{MR0072878}, the representation theory for general linear groups over these rings is quite hard.
It has been shown (Aubert-Onn-Prasad-Stasinski \cite{MR2607551} and Singla \cite{Poojareps}) that this problem is intricately related to the problem of understanding the representations of automorphism groups of finitely generated torsion modules over discrete valuation rings.
However, explicit constructions have been available either for a very small class of representations (Hill \cite{MR1334228,MR1311772}) or for a very small class of groups (Onn \cite{MR2456275}, Stasinski \cite{MR2588859}, Singla\cite{Poojareps}).

This article concerns the decomposition of the Weil representation of the full symplectic group associated to a finite abelian group of odd order (and more generally, a finite module of odd order over a Dedekind domain) into irreducible representations.
When the module in question is elementary (e.g., $(\Z/p\Z)^n$ for some odd prime $p$), it is well-known that the Weil representation, which may be realized on the space of functions on the abelian group, breaks up into two irreducible subspaces consisting of even and odd functions.
Besides this, only the case where all the invariant factors are equal (e.g., $(\Z/p^k\Z)^n$) has been understood completely (see Prasad \cite[Theorem~2]{MR1478492} for the case where $k$ is even, Cliff-McNeilly-Szechtman \cite{MR1783635} for the general case).
A small part of the decomposition has been explained in the general case (Cliff-McNeilly-Szechtman \cite{MR1971043}).
In their paper \cite{1101.0560}, Maktouf and Torasso have shown that
the restriction of the Weil representation of a symplectic group over a $p$-adic field to a maximal compact subgroup or to a maximal elliptic torus is multiplicity-free and have given an explicit description of the irreducible subrepresentations.

In this paper, we describe all the invariant subspaces for the Weil representation for all finite modules of odd order over a Dedekind domain.
To be specific, it is shown that the Weil representation has a multiplicity-free decomposition (Theorem~\ref{theorem:multiplicity-free}).
When the underlying finitely generated torsion module is primary of type $\lambda$ for some partition $\lambda$ (see (\ref{eq:11}) and (\ref{eq:13})), the irreducible components are parametrized by elements of a partially ordered set which depends only on $\lambda$, and not on the underlying ring.
As the local ring varies, for a fixed element of this \poset{}, the dimension of the corresponding representation is shown to be a polynomial in the order of its residue field whose coefficients do not depend on the ring (Theorem~\ref{theorem:main}).
These polynomials are computed explicitly (Theorem~\ref{theorem:dim}).
The centralizer algebra of the Weil representation also has a combinatorial basis indexed by a partially ordered set which depends only on $\lambda$ and not on the underlying ring.
The projection operators onto the irreducible invariant subspaces, when expressed in terms of this basis are also shown to have coefficients which are polynomials in the order of the residue field whose coefficients also do not depend on the ring (Theorem~\ref{theorem:alpha-qualitative}), and these polynomials are computed explicitly (Theorems~\ref{theorem:alpha-supp} and~\ref{theorem:alpha-exact}).
Thus the decomposition of the Weil representation into irreducible invariant subspaces is, despite its apparent complexity, combinatorial in nature.

The results in this paper could serve as a starting point from which more subtle constructions involving the Weil representation (such as Howe duality) which have worked so well in the case of classical groups over finite fields can be extended to groups of automorphisms of finitely generated torsion modules over a discrete valuation ring.

It is worth noting that every Heisenberg group that is finite modulo centre is isomorphic to one of the groups considered here (for the precise statement, see Prasad-Shapiro-Vemuri \cite{Prasad2010}, particularly, Section~3 and Corollary~5.7).
For example, the seemingly different Heisenberg groups used by Tanaka \cite{MR0219635} to construct representations in the principal series and cuspidal series of finite $\mathrm{SL}_2$ are isomorphic.
The difference lies in the realization of the special linear group as a group of automorphisms.
The decomposition of any Weil representation associated to a finite abelian group will therefore always be a refinement of one of the decompositions described in this paper.

In order to concentrate on the important ideas without being distracted by technicalities, the main body of this paper uses the setting of finite abelian groups. Section~\ref{sec:finite-modules} explains how to carry over the results to finitely generated torsion modules over discrete valuation rings and even more generally, finite modules over Dedekind domains.

To obtain our results, we use the combinatorial theory of orbits in finite abelian groups developed by us in \cite{MR2793603} (the relevant part is recalled in Section~\ref{sec:orbits}), well-known basic facts about Heisenberg groups and Weil representations which are recalled in Section~\ref{sec:basic-definitions} (of which simple proofs can be found in \cite{gdft}), and the standard combinatorial theory of \poset s, as set out in Chapter~3 of Stanley's book \cite{MR1442260}.

\subsection{Structure of the paper}
\label{sec:structure-paper}
In Section~\ref{sec:basic-definitions}, we recall the definition of the Heisenberg group and its Schr\"odinger representatution.
Following Weil \cite{MR0165033}, we deduce the existence of the Weil representation from the irreducibility and uniqueness of the Schr\"odinger representation.
Section~\ref{sec:problem} contains a precise formulation of our main problem - the decomposition of the Weil representation associated to a finite abelian group into irreducible summands.

In Section~\ref{sec:products}, we use the primary decomposition of finite abelian groups to reduce the main problem to the case of primary finite abelian groups.
In Section~\ref{sec:mult-orbits} we explain the relationship between the mutliplicities of the summands in the decomposition of the Weil representation and the number of orbits for the action of the symplectic group on the quotient of the Heisenberg group by its centre.

In Section~\ref{sec:orbits}, we recall the combinatorial theory of orbits and characteristic subgroups in a finite abelian group developed by the authors in \cite{MR2793603}.
An important order-reversing involution on the lattice of characteristic subgroups is introduced in Section~\ref{section:char-sub}.
The theory of \cite{MR2793603} is extended to symplectic orbits on the quotient of the Heisenberg group modulo its centre in Section~\ref{sec:symplectic-orbits}.

Section~\ref{sec:mult-one} contains the first major theorem of this article, namely that the decomposition of the Weil representation associated to a finite abelian group into simple representations is multiplicity-free (Theorem~\ref{theorem:multiplicity-free}).
This is achieved by computing the structure constants of its endomorphism algebra to show that this algebra is commutative (Lemma~\ref{lemma:product}).
It follows that the set of invariant subspaces of the Weil representation, partially ordered by inclusion, forms a Boolean lattice (Corollary~\ref{cor:Boolean-lattice}).

The task of describing the irreducible components of the Weil representation is carried out using combinatorial analysis in Sections~\ref{sec:construction-invariant-subs}--\ref{sec:parametrization}.
In Section~\ref{sec:construction-invariant-subs} two elementary types of invariant subspaces of the Weil representation are identified.
The first type are the subspaces of $L^2(A)$ consisting of even and odd functions; the second type are associated to so-called small order ideals
(these subspaces are far from being mutually disjoint and irreducible).
In Section~\ref{sec:components}, we describe a tensor product decomposition of the invariant subspaces associated to small order ideals.
In Section~\ref{sec:poset-invar-sub}, we refine the invariant subspaces of Section~\ref{sec:construction-invariant-subs} to construct a family of invariant subspaces, which as a poset under inclusion is described in terms of a combinatorially defined poset $Q_\lambda$.
In Section~\ref{sec:parametrization} the irreducible subrepresentations of the Weil representations are extracted from the invariant subspaces of Section~\ref{sec:poset-invar-sub} (Theorem~\ref{theorem:main}).
The rest of Section~\ref{sec:parametrization} is devoted to the explicit computation of the dimensions of these subrepresentations as well as formulae for the orthogonal projections onto them in terms of a natrual basis for the endomorphism algebra of the Weil representation.

Finally in Section~\ref{sec:finite-modules} we explain how to extend the ideas of this paper to analyze the Weil representation associated to any finite module over a Dedekind domain.
\subsection{Basic definitions}
\label{sec:basic-definitions}
Let $A$ be a finite abelian group of odd order.
Let $\hat A$ denote the Pontryagin dual of $A$.
This is the group of all homomorphisms $A\to U(1)$, where $U(1)$ denotes the group of unit complex numbers.
Let $K=A\times \hat A$.
For each $k=(x,\chi)\in K$, the unitary operator on $L^2(A)$ defined by
\begin{equation*}
  W_k f(u)=\chi(u-x/2)f(u-x) \text{ for all } f\in L^2(A),\; u\in A
\end{equation*}
is called a Weyl operator.
These operators satisfy
\begin{equation*}
  W_kW_l=c(k,l)W_{k+l} \text{ for all } k,l\in K,
\end{equation*}
where, if $k=(x,\chi)$ and $l=(y,\lambda)$, then
\begin{equation*}
  c(k,l)=\chi(y/2)\lambda(x/2)^{-1}.
\end{equation*}
Observe that $c(k,l)$ is bimultiplicative, for example, $c(k,l+l') = c(k,l)c(k,l')$ for all $k,l,l'\in K$.

The subgroup 
\begin{equation*}
  H=\{c W_k|c\in U(1),k\in K\}
\end{equation*}
of the group of unitary operators on $L^2(A)$ is called the Heisenberg group associated to $A$.
This group is known to physicists as a generalized Pauli group or a Weyl-Heisenberg group.
As defined here, it comes with a unitary representation on $L^2(A)$, called the Schr\"odinger representation.
Mackey's generalization \cite[Theorem~1]{Mackey49} of the Stone-von Neumann theorem applies:
\begin{theorem}
  \label{theorem:SvN}
  The Schr\"odinger representation of $H$ is irreducible.
  Let $\rho:H\to U(\Hilb)$ (here $U(\Hilb)$ is the group of unitary operators on a Hilbert space $\Hilb$) be an irreducible unitary representation of $H$ such that $\rho(c W_0)=c\mathrm{Id}_{\Hilb}$ for every $c\in U(1)$.
  Then there exists, up to scaling, a unique isometry $W:L^2(A)\to \Hilb$ such that
  \begin{equation*}
    WW_k=\rho(W_k)W \text{ for all } k\in K.
  \end{equation*}
\end{theorem}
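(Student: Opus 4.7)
The plan is to split the statement into the irreducibility of the Schrödinger representation and the existence and essential uniqueness of the intertwiner $W$, and handle each with a Wedderburn-type argument adapted to the Heisenberg setting.

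For irreducibility, the key computation is the orthogonality of the Weyl operators $\{W_k\}_{k \in K}$ with respect to the Hilbert--Schmidt inner product on $\End(L^2(A))$. I would compute $\tr(W_k W_l^*)$ directly from the definition: the trace reduces to a character sum of the form $\sum_{u\in A}\chi(u)$ for some $\chi \in \hat A$ determined by $k-l$, which by Pontryagin duality is $|A|$ if $k=l$ and $0$ otherwise. Since $|K|=|A|^2=\dim_\C\End(L^2(A))$, the $W_k$ form a $\C$-basis. Hence any operator commuting with the Schrödinger representation commutes with every element of $\End(L^2(A))$ and is therefore scalar; Schur's lemma yields irreducibility.

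For the second part, pass to the twisted group algebra $\mathcal A=\C^{c}[K]$, the $\C$-vector space on basis $\{T_k\}_{k\in K}$ with product $T_kT_l=c(k,l)T_{k+l}$. The Schrödinger representation defines an algebra map $\mathcal A\to\End(L^2(A))$, $T_k\mapsto W_k$, which is injective by the linear independence established above and, by equality of dimensions, an isomorphism $\mathcal A\cong M_{|A|}(\C)$. The hypothesis $\rho(cW_0)=c\,\mathrm{Id}_\Hilb$ says that $\rho$ too factors through $\mathcal A$, making $\Hilb$ a module over the matrix algebra $M_{|A|}(\C)$. Any such module is a direct sum of copies of the unique simple module $\C^{|A|}$; irreducibility of $\rho$ forces exactly one copy, producing a $\C$-linear isomorphism $W\colon L^2(A)\to\Hilb$ with $W W_k=\rho(W_k)W$, unique up to scalar by Schur. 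Since $\rho$ is unitary, the adjoint relation $W^*W\in\End_H(L^2(A))=\C\cdot\mathrm{Id}$ makes $W^*W$ a positive scalar; rescaling gives an isometry.

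The main obstacle, and the only place care is needed, is the passage from the infinite group $H$ (containing the circle $U(1)$) to the finite-dimensional algebra $\mathcal A$: one must verify that $c$ is a genuine $2$-cocycle (equivalent to associativity of $W_kW_lW_m$), that $\rho$ descends because its central character on $U(1)\hookrightarrow H$ is the tautological one, and that the Schrödinger map really is injective so that $\mathcal A$ is identified with a full matrix algebra. Once this bookkeeping is in place, everything else is a direct application of the Wedderburn structure theorem, and I would in practice conduct the entire argument inside $\End(L^2(A))$ and $\End(\Hilb)$, using $H$ only as an indexing device.
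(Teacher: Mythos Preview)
Your argument is correct, but there is nothing to compare it against: the paper does not prove Theorem~\ref{theorem:SvN}. It is stated as a known result (``Mackey's generalization of the Stone--von Neumann theorem applies'') with references to \cite{SvN} and \cite{gdft} for proofs. So your proposal is not an alternative to the paper's proof; it is a self-contained proof of a theorem the paper merely cites.

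That said, your approach dovetails nicely with what the paper does later. The orthogonality of the Weyl operators that you establish by a direct trace computation is exactly Lemma~\ref{lemma:onb}, which the paper proves by a slightly different route (showing that the $W_l$ are eigenvectors with distinct eigencharacters for the conjugation action $\tau(k)T = W_k T W_k^*$ of $K$ on $\End_\C L^2(A)$). Either argument gives that $\{W_k\}$ is a basis of $\End_\C L^2(A)$, and from there your deduction of irreducibility (commutant is scalar) and your passage to the twisted group algebra $\C^c[K]\cong M_{|A|}(\C)$ to get uniqueness via Wedderburn are standard and sound. The bookkeeping you flag---that $c$ is a cocycle, that $\rho$ descends to $\C^c[K]$ because of the central character hypothesis, and that the Schr\"odinger map is injective---is all straightforward in this finite setting, so there is no genuine obstacle.
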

If $g$ is an automorphism of $K$ such that
\begin{equation}
  \label{eq:2}
  c(g k,g l)=c(k,l) \text{ for all } k,l\in K
\end{equation}
then $\rho_g:H\to U(L^2(A))$ defined by
\begin{equation*}
  \rho_g(c W_k)=c W_{g(k)} \text{ for all } c\in U(1),\; k\in K
\end{equation*}
is an irreducible unitary representation of $H$ on $L^2(A)$ such that $\rho(c W_0)=c\mathrm{Id}_{L^2(A)}$.
By Theorem~\ref{theorem:SvN}, there exists a unitary operator $W_g$ on $L^2(A)$ such that $W_g W_k=W_{g(k)}W_g$ for all $k\in K$.
Writing $W_g^*$ for the adjoint of the unitary operator $W_g$, we have:
\begin{equation}
  \label{eq:1}
  W_g W_kW_g^*=W_{g(k)} \text{ for all } k\in K.
\end{equation}
If $g_1$ and $g_2$ are two such automorphisms, both $W_{g_1g_2}$ and $W_{g_1}W_{g_2}$ intertwine the Schr\"odinger representation with $\rho_{g_1g_2}$, and hence must differ by a unitary scalar:
\begin{equation*}
  W_{g_1}W_{g_2}=c(g_1,g_2)W_{g_1g_2} \text{ for some } c(g_1,g_2)\in U(1).
\end{equation*}
Let $\Sp(K)$ be the group of all automorphisms $g$ of $K$ which satisfy (\ref{eq:2}). We have shown that $g\mapsto W_g$ is a projective representation of $\Sp(K)$ on $L^2(A)$. This representation is known as the Weil representation.
\begin{remark}
  The operators $W_g$, for $g\in \Sp(K)$ can be normalized in such a way that $c(g_1,g_2)=1$ for all $g_1,g_2$ (see Remark~\ref{remark:ordinary-rep}).
  Thus the Weil representation can be taken to be an ordinary representation of $\Sp(K)$.
\end{remark}
\begin{remark}
  The overlap of notation between the Weyl operators and the Weil representation is suggested by (\ref{eq:1}), which implies that they can be combined to construct a representation of $H\rtimes \Sp(K)$.
  The operators in this representation are precisely the unitary operators which normalize $H$.
  The resulting group is sometimes known as a Clifford group or a Jacobi group.
  It plays a prominent role in the stabilizer formalism for quantum error-correcting codes (see Chapter X of Nielsen and Chuang \cite{Nielsen-Chuang}).
\end{remark}
\subsection{Formulation of the problem}
\label{sec:problem}
We investigate the decomposition
\begin{equation}
  \label{eq:12}
  L^2(A)=\bigoplus_{\pi\in \widehat{\Sp(K)}} m_\pi \Hilb_\pi
\end{equation}
into irreducible representations.
Here $\widehat{\Sp(K)}$ denotes the set of equivalence classes of irreducible unitary representations of $\Sp(K)$ and, for each $\pi:\Sp(K)\to U(\Hilb_\pi)$ in $\widehat{\Sp(K)}$, $m_\pi$ denotes the multiplicity of $\pi$ in the Weil representation.
Although the Weil representation is defined only up to multiplication by a scalar representation, the multiplicities and dimensions of the irreducible representations occurring in the decomposition are invariant under such twists (see Remark~\ref{remark:projective-equiv}).
As explained in Section~\ref{sec:overview}, the outcome of this paper is an understanding of this decomposition.
\section{Product decompositions}
\label{sec:products}
We shall recall and apply a well-known observation on Weil representations associated to a product of abelian groups (see \cite[Corollary~2.5]{Gerardin77}).
\subsection{Projective equivalence}
Since Weil representations are defined only up to scalar factors, we use a definition of equivalence of representations that is weaker than unitary equivalence:
\begin{defn}
  [Projective equivalence]
  \label{defn:proj-equiv}
  Let $G$ be a group and $\rho_i:G\to U(\Hilb_i)$ for $i=1,2$ be two unitary representations of $G$.
  We say that $\rho_1$ and $\rho_2$ are projectively equivalent if there exists a homomorphism $\chi:G\to U(1)$ such that $\rho_2$ is unitarily equivalent to $\rho_1\otimes \chi$.
\end{defn}
\begin{remark}
  \label{remark:projective-equiv}
  If, as a representation of $G$,
  \begin{equation*}
    \Hilb_i = \bigoplus_{\pi \in \hat G} m^{(i)}_\pi \Hilb_\pi
  \end{equation*}
  is the decomposition of $\Hilb_i$ into irreducibles for representations as in Definition~\ref{defn:proj-equiv}, then $m^{(2)}_{\pi\otimes\chi}=m^{(1)}_\pi$, so there is a bijection between the sets of irreducible representations of $G$ that appear in $\Hilb_1$ and $\Hilb_2$ which preserves multiplicities and dimensions.
\end{remark}
\subsection{Tensor product decomposition}
If $A$ admits a product decomposition $A=A'\times A^{\prime\prime}$, then
\begin{equation}
  \label{eq:9}
  L^2(A)=L^2(A')\otimes L^2(A^{\prime\prime}).
\end{equation}
Let $K'=A'\times \widehat{A'}$, $K^{\prime\prime}=A^{\prime\prime}\times \widehat{A^{\prime\prime}}$.
Thus $K=K'\times K''$.
Let $S'$ and $S^{\prime\prime}$ be subgroups of $\Sp(K')$ and $\Sp(K^{\prime\prime})$ respectively.
Then $S=S'\times S''$ is a subgroup of $\Sp(K)$.
\begin{theorem}
  \label{theorem:product-Weil}
  The Weil representation of $S$ on $L^2(A)$ is projectively equivalent to the tensor product of the Weil representation of $S'$ on $L^2(A')$ and the Weil representation of $S''$ on $L^2(A'')$.
\end{theorem}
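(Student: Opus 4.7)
The plan is to exploit the uniqueness clause of Theorem~\ref{theorem:SvN} after identifying the Schr\"odinger representation of $H$ with an external tensor product of the Schr\"odinger representations associated to $A'$ and $A''$.

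First I would check that, under the canonical isomorphism (\ref{eq:9}) together with $\hat A = \widehat{A'}\times\widehat{A''}$ (so that $K=K'\times K''$), the defining formula for the Weyl operators yields
\[ W_{(k',k'')} \;=\; W_{k'}\otimes W_{k''} \qquad\text{for all } k'\in K',\ k''\in K''. \]
A character of $A$ splits as a tensor product $\chi'\otimes\chi''$, and both the shift and the phase in the definition of $W_k$ then separate cleanly between the two factors. It is equally direct to check that the coordinatewise action of $S=S'\times S''$ on $K$ preserves the cocycle $c$ of (\ref{eq:2}), so $S\subseteq \Sp(K)$.

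Next, fix $g=(g',g'')\in S$ and set $T_g := W_{g'}\otimes W_{g''}$. Using (\ref{eq:1}) in each tensor factor,
\[ T_g\, W_k\, T_g^{-1} \;=\; W_{g'}W_{k'}W_{g'}^{-1}\otimes W_{g''}W_{k''}W_{g''}^{-1} \;=\; W_{g'(k')}\otimes W_{g''(k'')} \;=\; W_{g(k)} \]
for every $k\in K$. Thus both $W_g$ and $T_g$ intertwine the Schr\"odinger representation with $\rho_g$, and Theorem~\ref{theorem:SvN} supplies a unitary scalar $\chi(g)\in U(1)$ with $W_g=\chi(g)\,T_g$ for all $g\in S$.

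Finally, to match Definition~\ref{defn:proj-equiv}, I would normalize the three projective representations $g\mapsto W_g$, $g'\mapsto W_{g'}$, $g''\mapsto W_{g''}$ to genuine unitary representations of $\Sp(K)$, $\Sp(K')$, $\Sp(K'')$ respectively, as permitted by the remark following Theorem~\ref{theorem:SvN}. Once all three are ordinary, so is $g\mapsto T_g$, and then comparing $W_{gh}=W_g W_h$ with $T_{gh}=T_g T_h$ via $W_g=\chi(g)T_g$ forces $\chi:S\to U(1)$ to be a homomorphism. This exhibits the Weil representation of $S$ on $L^2(A)$ as the tensor product of the Weil representations of $S'$ and $S''$ twisted by the character $\chi$, which is precisely the projective equivalence required. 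The one genuine obstacle --- that Stone--von Neumann--Mackey a priori pins $\chi$ down only as a function rather than a homomorphism --- is dissolved by this ordinary-representation normalization; everything else is a direct verification.
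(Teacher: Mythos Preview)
Your proposal is correct and follows essentially the same route as the paper: both verify that $W_{g'}\otimes W_{g''}$ satisfies the defining identity (\ref{eq:1}) for the Weil representation of $S$ on $L^2(A)$, which is the heart of the matter. You are simply more explicit than the paper about invoking the ordinary-representation normalization (Remark~\ref{remark:ordinary-rep}) to guarantee that the scalar $\chi$ is a genuine homomorphism as Definition~\ref{defn:proj-equiv} requires, whereas the paper leaves that step implicit.
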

\begin{proof}
  By (\ref{eq:1}), the Weil representations of $S'$ and $S''$ satisfy
  \begin{equation*}
    W_{g'}W_{k'}W_{g'}^*=W_{g'(k')}\text{ and } W_{g''}W_{k''}W_{g''}^*=W_{g''(k'')}
  \end{equation*}
  for all $g'\in S'$, $g''\in S''$, $k'\in K'$ and $k''\in K''$, whence
  \begin{equation*}
    (W_{g'}\otimes W_{g''})(W_{k'}\otimes W_{k''})(W_{g'}\otimes W_{g''})^*=W_{g'(k')}\otimes W_{g''(k'')}.
  \end{equation*}
  Since $W_{k'}\otimes W_{k''}$ coincides with $W_{(k',k'')}$ under the isomorphism (\ref{eq:9}), $W_{g'}\otimes W_{g''}$ satisfies the defining identity (\ref{eq:1}) for the Weil representation of $S$ on $L^2(A)$.
\end{proof}
\subsection{Primary decomposition}
\label{sec:prim-decomp}
Every finite abelian group has primary decomposition
\begin{equation*}
  A = \prod_{p \text{ prime}} A_p,
\end{equation*}
where $A_p$ is the subgroup of elements of $A$ annihilated by some power of $p$.
Writing $K_p$ for $A_p\times \widehat{A_p}$,
\begin{equation*}
  K=\prod_p K_p\quad \text{ and } \quad \Sp(K)=\prod_p \Sp(K_p).
\end{equation*}
Theorem~\ref{theorem:product-Weil}, when applied to the primary decomposition gives
\begin{cor}
  \label{cor:primary-decomposition-Weil}
  The Weil representation of $\Sp(K)$ on $L^2(A)$ is projectively equivalent to the tensor product over those primes $p$ for which $A_p\neq 0$ of the Weil representations of $\Sp(K_p)$ on $L^2(A_p)$.
\end{cor}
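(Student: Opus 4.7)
The plan is to deduce this by iterated application of Theorem~\ref{theorem:product-Weil}. Since $A$ is finite, only finitely many primes $p$ have $A_p\neq 0$; enumerate them as $p_1,\dots,p_r$. I would then proceed by induction on $r$, the number of nontrivial primary components.

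The base case $r=1$ is trivial. For the inductive step, split the primary decomposition as a product of two factors: $A = A_{p_1}\times A^{\prime\prime}$, where $A^{\prime\prime}=\prod_{i\geq 2} A_{p_i}$. The statements in the paragraph preceding the corollary give $K=K_{p_1}\times K^{\prime\prime}$ and $\Sp(K)=\Sp(K_{p_1})\times \Sp(K^{\prime\prime})$, where $K^{\prime\prime}=A^{\prime\prime}\times\widehat{A^{\prime\prime}}$. Taking $S'=\Sp(K_{p_1})$ and $S^{\prime\prime}=\Sp(K^{\prime\prime})$ in Theorem~\ref{theorem:product-Weil}, the Weil representation of $\Sp(K)$ on $L^2(A)$ is projectively equivalent to the tensor product of the Weil representation of $\Sp(K_{p_1})$ on $L^2(A_{p_1})$ and the Weil representation of $\Sp(K^{\prime\prime})$ on $L^2(A^{\prime\prime})$. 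The inductive hypothesis, applied to $A^{\prime\prime}$, identifies the latter (up to projective equivalence) with the tensor product over $i\geq 2$ of the Weil representations of $\Sp(K_{p_i})$ on $L^2(A_{p_i})$.

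Tensoring the two pieces and using that projective equivalence is preserved under tensor products (if $\rho_2\simeq\rho_1\otimes\chi$ via a unitary intertwiner, then $\rho_2\otimes\sigma\simeq(\rho_1\otimes\sigma)\otimes\chi$), one obtains the stated decomposition.

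No real obstacle appears: the only point that needs checking is the identification $\Sp(K)=\prod_p\Sp(K_p)$, which follows from the canonicity of the primary decomposition of $A$ (any automorphism of $A$, and hence of $K$, preserves each $A_p$ and each $\widehat{A_p}$), together with the fact that the cocycle $c$ restricted to $K_p\times K_q$ for $p\neq q$ is trivial because $\chi(y/2)=\lambda(x/2)^{-1}=1$ whenever $\chi\in\widehat{A_p}$, $y\in A_q$ (the pairing between different primary components is zero). Since this is asserted in the paragraph preceding the corollary, the proof reduces to invoking Theorem~\ref{theorem:product-Weil} inductively.
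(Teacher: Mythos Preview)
Your proposal is correct and matches the paper's approach: the paper simply states the corollary as an immediate application of Theorem~\ref{theorem:product-Weil} to the primary decomposition, without writing out any argument, so your inductive unwinding is just a more explicit version of what the paper leaves implicit.
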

  In view of Corollary~\ref{cor:primary-decomposition-Weil}, it suffices to consider the case where $A$ is a finite abelian $p$-group for some odd prime $p$.

\section{Multiplicities and orbits}
\label{sec:mult-orbits}
We now recall the relation between the decomposition of the Weil representation and orbits in $K$ \cite{gdft}.
\subsection{An orthonormal basis}
\begin{lemma}
  \label{lemma:onb}
  The set $\{W_k|k\in K\}$ of Weyl operators is an orthonormal basis of $\End_\C L^2(A)$.
\end{lemma}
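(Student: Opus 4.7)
The plan is to verify orthonormality of the family $\{W_k\}_{k\in K}$ with respect to the normalized Hilbert--Schmidt inner product
\[
\langle S,T\rangle = \frac{1}{|A|}\,\tr(ST^*) \quad \text{on } \End_\C L^2(A),
\]
and then invoke a dimension count, since $\dim_\C \End_\C L^2(A) = |A|^2 = |A\times\hat A| = |K|$: any family of $|K|$ orthonormal vectors in a $|K|$-dimensional Hermitian inner product space is automatically a basis. So the whole proof reduces to computing $\tr(W_kW_l^*)$.

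First I would compute $\tr(W_k)$ directly. Writing $\{\delta_u\}_{u\in A}$ for the standard orthonormal basis of $L^2(A)$, the formula $W_kf(u)=\chi(u-x/2)f(u-x)$ with $k=(x,\chi)$ gives $W_k\delta_u = \chi(u+x/2)\,\delta_{u+x}$, so the matrix coefficient $\langle W_k\delta_u,\delta_u\rangle$ vanishes unless $x=0$, and when $x=0$ it equals $\chi(u)$. Hence
\[
\tr(W_k) = \begin{cases} |A| & \text{if } k=0,\\ 0 & \text{if } k\neq 0,\end{cases}
\]
the second line for $x=0$, $\chi\neq 1$ following from orthogonality of characters on $A$.

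Next I would convert $W_kW_l^*$ into a scalar multiple of some $W_m$ using unitarity and the cocycle identity $W_kW_l = c(k,l)W_{k+l}$. Since $W_l$ is unitary, $W_l^* = W_l^{-1}$, and the identity $W_lW_{-l} = c(l,-l)W_0 = c(l,-l)\,\mathrm{Id}$ yields $W_l^* = c(l,-l)^{-1}W_{-l}$. Therefore
\[
W_kW_l^* = c(l,-l)^{-1}\,c(k,-l)\,W_{k-l}.
\]
Taking traces and using the previous step, $\tr(W_kW_l^*)=0$ whenever $k\neq l$. For $k=l$ the scalar simplifies to $c(l,-l)^{-1}c(l,-l)=1$, giving $\tr(W_lW_l^*)=|A|$. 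Thus $\langle W_k,W_l\rangle = \delta_{k,l}$, and the dimension count finishes the proof.

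The only place where something could go wrong is the cocycle bookkeeping in the reduction of $W_kW_l^*$, in particular verifying that the spurious scalar factor cancels exactly when $k=l$ so that the normalization of the inner product is consistent. But this is forced by the identity $W_lW_l^* = \mathrm{Id}$, so no separate computation with the explicit formula for $c(\cdot,\cdot)$ is actually needed — a small point worth emphasizing since it makes the argument independent of the particular choice of cocycle used to define the Heisenberg extension.
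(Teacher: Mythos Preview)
Your proof is correct. Both you and the paper finish with the same dimension count $|K|=|A|^2=\dim\End_\C L^2(A)$, but you establish orthogonality differently. You compute $\tr(W_k)$ directly from the matrix entries in the standard basis $\{\delta_u\}$ and then reduce $\tr(W_kW_l^*)$ to $\tr(W_{k-l})$ via the cocycle relation. The paper instead introduces the conjugation action $\tau(k)T=W_kTW_k^*$ of $K$ on $\End_\C L^2(A)$ and checks that $\tau(k)W_l=\chi(y)\lambda(x)^{-1}W_l$, so the $W_l$ are eigenvectors for pairwise distinct characters of $K$ and hence orthogonal. Your route is more elementary and entirely self-contained; the paper's route is a bit more conceptual, since it exhibits $\{W_l\}$ as the isotypic decomposition of the $K$-module $\End_\C L^2(A)$, which explains why this basis is canonical and not just convenient. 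Incidentally, in this specific normalization one actually has $c(l,-l)=1$, so $W_l^*=W_{-l}$ on the nose; your remark that $W_lW_l^*=\mathrm{Id}$ makes the scalar irrelevant is of course the cleaner way to say it.
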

\begin{proof}
  For each $k\in K$ and $T\in \End_\C L^2(A)$, let
  \begin{equation*}
    \tau(k)T=W_k T W_k^*.
  \end{equation*}
  Then $k\mapsto \tau(k)$ is a unitary representation of $K$ on $\End_\C L^2(A)$.
  If $k=(x,\chi)$ and $l=(y,\lambda)$ are two elements of $K$, then
  \begin{eqnarray*}
    \tau(k)W_l&=&W_k W_l W_k^*\\
    &=&W_k W_l (W_l W_k)^* W_l\\
    &=&c(k,l) W_{k+l} c(l,k)^{-1} W_{l+k}^* W_l\\
    &=&\chi(y)\lambda(x)^{-1}W_l.
  \end{eqnarray*}
  Thus the $W_l$'s are eigenvectors for the action of $K$ with distinct eigencharacters.
  Therefore they form an orthonormal set of operators.
  Since $|K|=|A|^2=\dim\End_\C L^2(A)$, this orthonormal set is a basis.
\end{proof}
\subsection{Endomorphisms}
By Lemma~\ref{lemma:onb}, every $T\in \End_\C L^2(A)$ has a unique expansion
\begin{equation}
  \label{eq:3}
  T=\sum_{k\in K} T_k W_k, \text{ with each } T_k\in \C.
\end{equation}
\begin{theorem}
  \label{theorem:orbit-mult}
  For every subgroup $S$ of $\Sp(K)$,
  \begin{equation*}
    \End_S L^2(A)=\{T\in \End_\C L^2(A)|T_k=T_{g(k)} \text{ for all } g\in S,\; k\in K\}.
  \end{equation*}
\end{theorem}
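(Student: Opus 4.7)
The plan is to translate the $S$-equivariance condition into the Weyl-basis coordinates provided by Lemma~\ref{lemma:onb}. The first point to note is that although the Weil representation is only projective, the commutation condition is insensitive to this: if $W_g$ is rescaled by a unit scalar $c \in U(1)$, then $W_g T W_g^{*}$ is unchanged because $c$ and $\bar c$ cancel. Consequently, $T \in \End_S L^2(A)$ is equivalent to the unambiguous condition
\begin{equation*}
  W_g T W_g^{*} = T \text{ for all } g \in S.
\end{equation*}

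Next I would expand $T = \sum_{k\in K} T_k W_k$ as in (\ref{eq:3}) and substitute into the left-hand side. Using the defining identity (\ref{eq:1}), namely $W_g W_k W_g^{*} = W_{g(k)}$, linearity gives
\begin{equation*}
  W_g T W_g^{*} = \sum_{k \in K} T_k W_{g(k)} = \sum_{k \in K} T_{g^{-1}(k)} W_k,
\end{equation*}
the second equality being a reindexing via $k \mapsto g^{-1}(k)$, which is legitimate because $g$ is an automorphism of $K$.

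Finally I would invoke the uniqueness of the Weyl-basis expansion, again from Lemma~\ref{lemma:onb}: two expansions in the basis $\{W_k\}$ agree if and only if all coefficients agree. Applied to $W_g T W_g^{*} = T$, this yields $T_{g^{-1}(k)} = T_k$ for every $k \in K$; letting $g$ range over all of $S$ (and replacing $g$ by $g^{-1}$, which again lies in $S$), this is exactly the stated condition $T_k = T_{g(k)}$ for all $g \in S$ and $k \in K$. Both inclusions are obtained simultaneously from this biconditional.

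There is no real obstacle here; the argument is a direct calculation. The only conceptual step is recognizing that the projective ambiguity in the Weil representation disappears under the conjugation action on $\End_\C L^2(A)$, so that one is reduced to a clean identity in the Weyl-operator basis.
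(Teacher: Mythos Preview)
Your proof is correct and follows essentially the same approach as the paper's own proof, which simply notes that $T\in\End_S L^2(A)$ if and only if $W_gTW_g^*=T$ for all $g\in S$, expands $T$ in the Weyl basis, and applies the identity $W_gW_kW_g^*=W_{g(k)}$. You have merely spelled out the reindexing and the uniqueness step in more detail, and added the helpful remark that conjugation is insensitive to the scalar ambiguity in $W_g$; nothing differs in substance.
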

\begin{proof}
  Note that $T\in \End_S L^2(A)$ if and only if $W_g T W_g^*=T$ for all $g\in S$.
  Expanding $T$ as in (\ref{eq:3}) and using the defining identity (\ref{eq:1}) for $W_g$ gives the theorem.
\end{proof}
Now suppose that as a representation of $S$, $L^2(A)$ has the decomposition
\begin{equation*}
  L^2(A)=\bigoplus_{\pi\in \hat S}m_{\pi,S}\Hilb_\pi.
\end{equation*}
Then, together with Schur's lemma, Theorem~\ref{theorem:orbit-mult} implies
\begin{cor}
  \label{cor:orbit-mult}
  If $S\backslash K$ denotes the set of $S$-orbits in $K$,
  \begin{equation*}
    \sum_{\pi\in \hat S} m_{\pi,S}^2 =|S\backslash K|.
  \end{equation*}
\end{cor}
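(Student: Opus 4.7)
The plan is to compute the dimension of $\End_S L^2(A)$ in two different ways and compare.

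First I would use Theorem~\ref{theorem:orbit-mult}, which characterizes $\End_S L^2(A)$ as those operators $T=\sum_{k\in K}T_k W_k$ whose coefficient function $k\mapsto T_k$ is constant on $S$-orbits in $K$. Since $\{W_k\}_{k\in K}$ is an orthonormal (in particular linearly independent) basis of $\End_\C L^2(A)$ by Lemma~\ref{lemma:onb}, an $S$-invariant endomorphism is determined freely by one complex coefficient per $S$-orbit. Hence
\begin{equation*}
  \dim_\C \End_S L^2(A) = |S\backslash K|.
\end{equation*}

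Second, I would invoke Schur's lemma on the isotypic decomposition $L^2(A)=\bigoplus_{\pi\in\hat S} m_{\pi,S}\Hilb_\pi$. Because $\Hom_S(\Hilb_\pi,\Hilb_{\pi'})$ is one-dimensional when $\pi=\pi'$ and zero otherwise, the commutant decomposes as
\begin{equation*}
  \End_S L^2(A) \;\cong\; \bigoplus_{\pi\in\hat S} \End_\C(\C^{m_{\pi,S}}),
\end{equation*}
which has complex dimension $\sum_{\pi\in\hat S} m_{\pi,S}^2$. Equating the two expressions for $\dim_\C \End_S L^2(A)$ yields the desired identity.

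There is no real obstacle here: the content is entirely packaged in Theorem~\ref{theorem:orbit-mult} and the standard Schur-lemma description of the commutant. The only thing to be mildly careful about is that the sum on the left and the orbit count on the right are both finite (because $A$, and hence $K$ and $\hat S$, are finite), so no convergence issues arise. The statement is thus an immediate corollary, and the proof is essentially a one-line dimension count.
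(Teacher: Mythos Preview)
Your proposal is correct and is exactly the argument the paper has in mind: it states the corollary as following directly from Theorem~\ref{theorem:orbit-mult} together with Schur's lemma, which is precisely your two-way dimension count of $\End_S L^2(A)$.
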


\section{Orbits and characteristic subgroups}
\label{sec:counting-orbits}
We first recall the theory of orbits (under the full automorphism
group) and characteristic subgroups in a finite abelian group from \cite{MR2793603}.
We then see how it applies to $\Sp(K)$-orbits in $K$.

\subsection{Orbits}
\label{sec:orbits}
Every finite abelian $p$-group is isomorphic to
\begin{equation}
  \label{eq:11}
  A=\Z/p^{\lambda_1}\Z\times \cdots \times \Z/p^{\lambda_l}\Z
\end{equation}
for a unique sequence $\lambda=(\lambda_1\geq\cdots\geq\lambda_l)$ of positive integers (in other words, a partition).
Henceforth, we assume that $A$ is of the above form.
For each partition $\lambda$, let
\begin{equation*}
  P_\lambda = \big\{(v,k)|k\in \{\lambda_1,\ldots,\lambda_l\},\; 0\leq v<k\big\}.
\end{equation*}
Say that $(v,k)\geq (v',k')$ if and only if $v'\geq v$ and $k'-v'\leq k-v$.
This relation is a partial order on $P_\lambda$.
For $x\in \Z/p^k\Z$, let
\begin{equation*}
  v(x)=\max\{0\leq v\leq k|x\in p^v\Z/p^k\Z\}.
\end{equation*}

For $a=(a_1,\ldots,a_l)\in A$, let $I(a)$ be the order ideal in $P_\lambda$ generated by $(v(a_i),\lambda_i)$ with $a_i\neq 0$ in $\Z/p^{\lambda_i}\Z$.
\begin{example}
\begin{figure}[h]
  \begin{equation*}
    \begin{array}{ccc}
      \tiny{
      \begin{xy}
        (0,0)*{(4,5)};
        (5,5)*{(3,4)}**@{-};
        (0,10)*{(3,5)}**@{-};
        (5,15)*{(2,4)}**@{-};
        (0,20)*{(2,5)}**@{-};
        (5,25)*{(1,4)}**@{-};
        (0,30)*{(1,5)}**@{-};
        (5,35)*{(0,4)}**@{-};
        (0,40)*{(0,5)}**@{-};
        (5,5)*{(3,4)};
        (20,20)*{(0,1)}**@{-};
        (5,35)*{(0,4)}**@{-};
      \end{xy}
      }
      &\quad\quad\quad &
      \begin{xy}
        (0,0)*{\bullet};
        (5,5)*{\bullet}**@{-};
        (0,10)*{\bullet}**@{-};
        (5,15)*{\bullet}**@{-};
        (0,20)*{\circ}**@{-};
        (5,25)*{\circ}**@{-};
        (0,30)*{\circ}**@{-};
        (5,35)*{\circ}**@{-};
        (0,40)*{\circ}**@{-};
        (5,5)*{\circ};
        (20,20)*{\circ}**@{-};
        (5,35)*{\circ}**@{-};
      \end{xy}\\
      \text{The poset } P_{(5,4,4,1)} & & \text{The order ideal } I(p^4,p^2,p^3,0)
    \end{array}
  \end{equation*}
  \caption{}
  \label{fig:1}
\end{figure}
  \label{eg:Plambda}
  When $\lambda=(5,4,4,1)$, and $a=(p^4,p^2,p^3,0)$ the Hasse diagram of $P_\lambda$ is shown on the left hand side of Figure~\ref{fig:1}.
  The ideal $I(a)$ is represented by black dots on the right hand side of Figure~\ref{fig:1}.
Note that the elements of $P_\lambda$ are arranged in such a way that $k$ is constant along verticals and decreases from left to right.
\end{example}
\begin{theorem}
  \cite[Theorem~4.1]{MR2793603}
  \label{theorem:degeneration}
  For $a,b\in A$, $b$ is the image of $a$ under an endomorphism of $A$ if and only if $I(b)\subset I(a)$.
\end{theorem}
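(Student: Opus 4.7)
The plan is to reduce the problem to explicit $p$-adic valuation arithmetic inside $\Z/p^{\lambda_i}\Z$, by identifying endomorphisms of $A$ with $l\times l$ matrices. Any homomorphism $\Z/p^{\lambda_j}\Z\to\Z/p^{\lambda_i}\Z$ is multiplication by some $c_{ij}\in\Z/p^{\lambda_i}\Z$, and well-definedness (the requirement $c_{ij}\cdot p^{\lambda_j}\equiv 0\pmod{p^{\lambda_i}}$) forces $p^{\max(\lambda_i-\lambda_j,0)}\mid c_{ij}$. Every nonzero $x\in\Z/p^k\Z$ factors uniquely as $x=p^{v(x)}u$ with $u$ a unit in $\Z/p^{k-v(x)}\Z$. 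These two facts are the only ingredients needed.

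For the forward direction, I assume $b=\phi(a)$ with $\phi=(c_{ij})$, so $b_i=\sum_j c_{ij}a_j$ in $\Z/p^{\lambda_i}\Z$. For each summand $c_{ij}a_j\neq 0$ the valuation equals $v(c_{ij})+v(a_j)$, and a short case split on $\lambda_i\leq\lambda_j$ versus $\lambda_i>\lambda_j$, using the divisibility constraint on $c_{ij}$, shows directly that $(v(c_{ij}a_j),\lambda_i)\leq(v(a_j),\lambda_j)$ in $P_\lambda$. The ultrametric inequality $v(b_i)\geq\min_j v(c_{ij}a_j)$ then produces some $j^\ast$ with $(v(b_i),\lambda_i)\leq(v(c_{ij^\ast}a_{j^\ast}),\lambda_i)\leq(v(a_{j^\ast}),\lambda_{j^\ast})$ whenever $b_i\neq 0$, and hence $I(b)\subset I(a)$.

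For the converse, given $I(b)\subset I(a)$, for each $i$ with $b_i\neq 0$ I pick $j=j(i)$ such that $(v(b_i),\lambda_i)\leq(v(a_{j(i)}),\lambda_{j(i)})$, which unpacks to $v(b_i)\geq v(a_{j(i)})$ and $\lambda_i-v(b_i)\leq\lambda_{j(i)}-v(a_{j(i)})$. Writing $a_{j(i)}=p^{v(a_{j(i)})}u_{j(i)}$ and $b_i=p^{v(b_i)}w_i$ with $u_{j(i)},w_i$ units, I set $c_{i,j(i)}:=p^{v(b_i)-v(a_{j(i)})}w_iu_{j(i)}^{-1}$ (lifted arbitrarily to $\Z/p^{\lambda_i}\Z$) and all other $c_{ij}:=0$. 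The first inequality makes $c_{i,j(i)}$ meet the divisibility required to define a homomorphism, while the second ensures that the lifting ambiguity in $u_{j(i)}^{-1}$ contributes an error of valuation at least $v(b_i)+(\lambda_{j(i)}-v(a_{j(i)}))\geq\lambda_i$, so $c_{i,j(i)}a_{j(i)}=b_i$ holds on the nose in $\Z/p^{\lambda_i}\Z$.

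The main obstacle is precisely this exact-equality check in the converse: a naive lift of $u_{j(i)}^{-1}$ from $\Z/p^{\lambda_{j(i)}-v(a_{j(i)})}\Z$ into $\Z/p^{\lambda_i}\Z$ only determines $c_{i,j(i)}a_{j(i)}$ modulo a single power of $p$, and one must verify that \emph{both} halves of the partial-order condition on $P_\lambda$ are simultaneously needed and sufficient, one to validate the homomorphism property and the other to absorb the lifting error. This dovetailing of the two coordinates of the order with the two divisibility requirements in $\End(A)$ is what makes the combinatorial order on $P_\lambda$ the exact shadow of endomorphic reachability.
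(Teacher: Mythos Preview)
Your argument is correct. The paper itself does not prove this statement: it is quoted without proof from the authors' companion paper \cite{2010arXiv1005.5222D} and used thereafter as a black box. Your matrix-and-valuation approach is the natural elementary proof and supplies exactly what a reader without that reference would want.

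One minor wording issue in the converse: you attribute the homomorphism's divisibility constraint to the first inequality $v(b_i)\geq v(a_{j(i)})$ and the lifting-error control to the second. In fact the divisibility constraint $v(c_{i,j(i)})\geq\max(0,\lambda_i-\lambda_{j(i)})$ and the well-definedness of the map (absorption of the $p^{\lambda_{j(i)}}$-ambiguity in the lift of $a_{j(i)}$) are the \emph{same} condition, and both inequalities contribute to it: the first gives $v(c_{i,j(i)})=v(b_i)-v(a_{j(i)})\geq 0$, the second gives $v(c_{i,j(i)})\geq\lambda_i-\lambda_{j(i)}$. Since you have and use both, nothing is lost and the construction goes through as written.
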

Given $x=(v,k)\in P_\lambda$, let $e(x)$ denote the element in $A$ all of whose entries are zero except for the left-most entry with $\lambda_i=k$, which is $p^v$.
For an order ideal $I$ in $P_\lambda$ denote by $\max I$ the set of maximal elements in $I$ and let
\begin{equation*}
  a(I)=\sum_{x\in \max I} e(x).
\end{equation*}
Let $G$ denote the group of all automorphisms of $A$.
\begin{theorem}
  \cite[Theorem~5.4]{MR2793603}
  \label{theorem:orbits}
  The map $I\mapsto a(I)$ gives rise to a bijection from the set of order ideals in $P_\lambda$ to the set of $G$-orbits in $A$.
\end{theorem}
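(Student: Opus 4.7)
The plan is to verify that $I\mapsto a(I)$ inverts the assignment $[a]\mapsto I(a)$, where we first note that the latter descends to $G$-orbits: since both $g$ and $g^{-1}$ are endomorphisms, Theorem~\ref{theorem:degeneration} gives $I(g\cdot a)\subset I(a)\subset I(g\cdot a)$ for every $g\in G$, so $I(g\cdot a)=I(a)$.

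To check $I(a(I))=I$, the key structural fact about $P_\lambda$ is that for each fixed $k$ the elements $(v,k)$ form a chain, with $(v,k)>(v',k)$ precisely when $v<v'$. Hence $\max I$ contains at most one element with any given second coordinate, so the summands $e(x)$ appearing in $a(I)=\sum_{x\in\max I}e(x)$ occupy pairwise distinct coordinates of $A$, namely the leftmost index with $\lambda_i=k$ for each $(v,k)\in\max I$. The generators of $I(a(I))$ produced by Theorem~\ref{theorem:degeneration} from the nonzero entries of $a(I)$ are therefore exactly $\max I$, whose downward closure is $I$.

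The main step is to show that $a$ lies in the same $G$-orbit as $a(I(a))$. I would build the required automorphism explicitly as a composition of elementary automorphisms of three types: (i)~permutations of coordinates $i,j$ with $\lambda_i=\lambda_j$; (ii)~rescaling $a_i\mapsto u a_i$ by a unit $u\in\Z/p^{\lambda_i}\Z$; and (iii)~transvections $a_i\mapsto a_i+c\,p^{\max(0,\lambda_i-\lambda_j)}a_j$ with $c$ a unit. Using types~(i) and~(ii), for each $k$ occurring as $\lambda_i$ with some $a_i\neq0$, move the nonzero entry with minimal valuation (which realizes the maximal element of $I(a)$ whose second coordinate is $k$) to the leftmost such index $i_k$ and normalize it to $p^{v_k}$. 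Then for every remaining nonzero $a_i$, apply a single transvection of type~(iii) using one of the distinguished entries $a_{i_k}$ to kill $a_i$, producing $a(I(a))$.

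The main obstacle is justifying the last sweep. A direct computation shows that a transvection built from a distinguished entry $a_{i_0}=p^{v_0}$ (with $\lambda_{i_0}=k_0$) can cancel another entry $a_i=p^v u$ (with $\lambda_i=k$) precisely when the equation
\[
p^v u + c\,p^{v_0+\max(0,k-k_0)}\equiv 0 \pmod{p^k}
\]
is solvable for $c\in\Hom(\Z/p^{k_0}\Z,\Z/p^k\Z)$, and this is the case exactly when $v_0\leq v$ and $k_0-v_0\geq k-v$, i.e.\ when $(v_0,k_0)\geq(v,k)$ in $P_\lambda$. Because maximal elements of the finite generating set $\{(v(a_i),\lambda_i):a_i\neq 0\}$ are exactly the maximal elements of $I(a)$, every non-distinguished nonzero entry is dominated by some distinguished entry, and the transvections needed to clear them commute because each modifies only the coordinate being cleared, leaving the distinguished entries untouched.
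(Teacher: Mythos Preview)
The paper does not actually prove this theorem; it is quoted from the companion paper \cite{2010arXiv1005.5222D}, so there is no in-paper argument to compare against. Your strategy---showing that $a\mapsto I(a)$ is constant on orbits, that $I(a(I))=I$, and that every $a$ can be moved to $a(I(a))$ by explicit elementary automorphisms---is the standard one and is essentially correct, but the reduction step has a small slip.

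You create one distinguished coordinate $i_k$ for \emph{every} value $k$ that occurs as some $\lambda_i$ with $a_i\neq 0$, then clear only the non-distinguished entries. But $a(I(a))$ has a nonzero coordinate only for those $k$ with $(v_k,k)\in\max I(a)$, and this can be a proper subset. For instance, with $\lambda=(2,1)$ and $a=(1,1)$ one has $(0,1)\leq(0,2)$, so $\max I(a)=\{(0,2)\}$ and $a(I(a))=(1,0)$; your procedure, however, makes both coordinates distinguished and stops at $(1,1)$. The parenthetical ``which realizes the maximal element of $I(a)$ whose second coordinate is $k$'' is exactly where the confusion enters: $(v_k,k)$ need not be maximal in $I(a)$. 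The fix is immediate: declare as distinguished only those $i_k$ with $(v_k,k)\in\max I(a)$, and clear all remaining nonzero entries. Your own transvection criterion handles these extra entries, since each $(v(a_i),\lambda_i)$ lies below some element of $\max I(a)$.

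One further minor point: requiring ``$c$ a unit'' in your type~(iii) moves is too restrictive. When $v>v_0+\max(0,k-k_0)$ the coefficient needed to cancel $a_i$ is divisible by $p$; the transvection is still an automorphism for arbitrary $c$, and indeed your displayed equation already allows $c$ to range over all of $\Hom(\Z/p^{k_0}\Z,\Z/p^k\Z)$.
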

The elements $a(I)$, as $I$ varies over the order ideals in $P_\lambda$, can be taken as representatives of the orbits.
The inverse of the function of Theorem~\ref{theorem:orbits} is given by $a\mapsto I(a)$.
\subsection{Characteristic subgroups}
\label{section:char-sub}
For an order ideal $I\subset P_\lambda$
\begin{equation*}
  A_I=\{a\in A|I(a)\subset I\}
\end{equation*}
is a characteristic subgroup of $A$ of order $p^{[I]}$, where $[I]$ denotes the number of elements in $I$, counted with multiplicity (the multiplicity of $(v,k)$ is the number of times that $k$ occurs in the partition $\lambda$, see \cite[Theorem~7.3]{MR2793603}).
Every characteristic subgroup of $A$ is of the form $A_I$ for some order ideal $I\subset P_\lambda$.
In fact, $I\mapsto A_I$ is an isomorphism of the lattice of order ideals in $P_\lambda$ onto the lattice of characteristic subgroups of $A$.
Thus, the lattice of characteristic subgroups of $A$ is a finite distributive lattice \cite[Section~3.4]{MR1442260}.
If $B$ is any group isomorphic to $A$, and $\phi:A\to B$ is an isomorphism, then since $A_I$ is characteristic, the image $B_I=\phi(A_I)$ does not depend on the choice of $\phi$.
Consequently, it makes sense to talk of the subgroup $\hat A_I$ of $\hat A$, which is the image of $A_I$ under any isomorphism $A\to \hat A$.

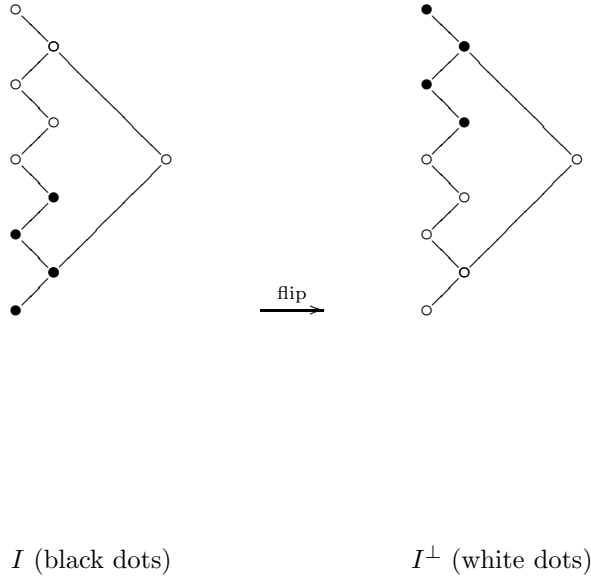
\begin{figure}[h]
  \begin{equation*}
  \xymatrix{
      \begin{xy}
    (0,0)*{\bullet};
    (5,5)*{\bullet}**@{-};
    (0,10)*{\bullet}**@{-};
    (5,15)*{\bullet}**@{-};
    (0,20)*{\circ}**@{-};
    (5,25)*{\circ}**@{-};
    (0,30)*{\circ}**@{-};
    (5,35)*{\circ}**@{-};
    (0,40)*{\circ}**@{-};
    (5,5)*{\circ};
    (20,20)*{\circ}**@{-};
    (5,35)*{\circ}**@{-};
  \end{xy}
  &\ar[r]^{\text{flip}}&&
\begin{xy}
    (0,0)*{\circ};
    (5,5)*{\circ}**@{-};
    (0,10)*{\circ}**@{-};
    (5,15)*{\circ}**@{-};
    (0,20)*{\circ}**@{-};
    (5,25)*{\bullet}**@{-};
    (0,30)*{\bullet}**@{-};
    (5,35)*{\bullet}**@{-};
    (0,40)*{\bullet}**@{-};
    (5,5)*{\circ};
    (20,20)*{\circ}**@{-};
    (5,35)*{\circ}**@{-};
  \end{xy}\\
  I \text{ (black dots)} &&& I^\perp \text{ (white dots)}
  }
\end{equation*}
\caption{The involution on order ideals.}
\label{fig:2}
\end{figure}
For each order ideal $I\subset P_\lambda$, its annihilator
\begin{equation*}
  A_I^\perp:=\{\chi\in \hat A: \chi(a)=1 \text{ for all } a\in A_I\}
\end{equation*}
is a characteristic subgroup of $\hat A$.
Therefore, there exists an order ideal $I^\perp\subset P_\lambda$ such that $A_I^\perp=\hat A_{I^\perp}$.
Clearly, $I\mapsto I^\perp$ is an order reversing involution of the set of order ideals in $P_\lambda$.
The Hasse diagram of $P_\lambda$ has a horizontal axis of symmetry.
$I^\perp$ can be visualized as the complement of the reflection of $I$ about this axis of $I$ (see Figure~\ref{fig:2}).
\subsection{Symplectic orbits}
\label{sec:symplectic-orbits}
\begin{theorem}
  \label{theorem:symplectic-orbits}
  The map $I\mapsto (a(I),0)$ (here $0$ denotes the identity element of $\hat A$) gives rise to a bijection from the set of order ideals in $P_\lambda$ to the set of $\Sp(K)$-orbits in $K$.
\end{theorem}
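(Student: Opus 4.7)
The plan is to establish the bijection in two steps: (i) surjectivity, namely that every $\Sp(K)$-orbit in $K$ meets the subset $A\times 0$ in an element of the form $(a(I),0)$; and (ii) injectivity, namely that distinct ideals yield distinct orbits.

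A common ingredient is the embedding $\Aut(A)\hookrightarrow\Sp(K)$ given by $\alpha\mapsto g_\alpha$, where $g_\alpha(x,\chi)=(\alpha(x),\chi\circ\alpha^{-1})$; the defining condition (\ref{eq:2}) is verified directly from the formula $c(k,l)=\chi(y/2)\lambda(x/2)^{-1}$. To prove (ii), suppose $g\in\Sp(K)$ carries $(a(I),0)$ to $(a(I'),0)$. Decomposing $g$ with respect to $K=A\oplus\hat A$, its upper-left block $\alpha\colon A\to A$ is an endomorphism of $A$ satisfying $\alpha(a(I))=a(I')$; Theorem~\ref{theorem:degeneration} then gives $I'=I(a(I'))\subset I(a(I))=I$, and the reverse inclusion follows by applying the same argument to $g^{-1}$. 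Hence $I=I'$.

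For (i), the key is to exploit the orthogonal decomposition of $K$ induced by the partition $\lambda$. Writing $A=\bigoplus_{i=1}^l A_i$ with $A_i=\Z/p^{\lambda_i}\Z$, and correspondingly $\hat A=\bigoplus_i\hat A_i$, set $K_i=A_i\oplus\hat A_i$, so that $K=\bigoplus_i K_i$. Since any character in $\hat A_i$ annihilates $A_j$ for $j\neq i$, the cocycle $c$ is trivial on pairs drawn from distinct $K_i$, while its restriction to each $K_i$ is the standard non-degenerate cocycle. Consequently the product $\Sp(K_1)\times\cdots\times\Sp(K_l)$ embeds diagonally into $\Sp(K)$. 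Each factor $\Sp(K_i)$ is isomorphic to $\mathrm{SL}_2(\Z/p^{\lambda_i}\Z)$, for which the classical orbit classification on $(\Z/p^{\lambda_i}\Z)^2$ allows one to transport any $k_i\in K_i$ to a representative of the form $(p^{v_i},0)\in A_i\times\hat A_i$. Performing these reductions simultaneously in each factor moves an arbitrary $k\in K$ into $A\times 0$; a final application of a suitable element of $\Aut(A)\subset\Sp(K)$, chosen via Theorem~\ref{theorem:orbits}, brings the resulting element to the canonical representative $(a(I),0)$.

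The principal technical point to check is the orthogonal splitting of the cocycle $c$ across the decomposition $K=\bigoplus_i K_i$, from which the embedding $\prod_i\Sp(K_i)\hookrightarrow\Sp(K)$ is a formal consequence. Once this is in place, the reduction within each symplectic summand is precisely the standard orbit classification for $\mathrm{SL}_2$ over a finite local principal ideal ring, and the remainder is a bookkeeping step using Theorem~\ref{theorem:orbits} to pass from a representative in $A\times 0$ to the canonical form $(a(I),0)$.
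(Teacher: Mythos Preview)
Your proof is correct and follows essentially the same approach as the paper: both reduce an arbitrary element of $K$ to $A\times\{0\}$ by working coordinatewise in the orthogonal splitting $K=\bigoplus_i K_i$ (the paper does this by hand with explicit swap and shear elements of $\Sp(K_i)$, while you invoke the equivalent $\mathrm{SL}_2(\Z/p^{\lambda_i}\Z)$ orbit classification), and then apply $\Aut(A)\subset\Sp(K)$ together with Theorem~\ref{theorem:orbits}. For injectivity the paper appeals to the coarser $\Aut(K)$-orbit structure, whereas your argument via the $(1,1)$-block and Theorem~\ref{theorem:degeneration} is a pleasant direct alternative.
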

\begin{proof}
  We first show that each $\Sp(K)$-orbit in $K$ intersects $A\times\{0\}$.
  Let $e_1,\ldots,e_l$ denote the generators of $A$, so $e_i$ is the element whose $i$th coordinate is $1$ and all other coordinates are $0$.
  Each element $a\in A$ has an expansion
  \begin{equation}
    \label{eq:4}
    a=a_1e_1+\cdots+a_le_l \text{ with } 0\leq a_i< p^{\lambda_i} \text{ for each } i\in \{1,\ldots,l\}
  \end{equation}
  Let $\epsilon_j$ denote the unique element of $\hat A$ for which
  \begin{equation*}
    \epsilon_j(e_k)=e^{2\pi i \delta_{j k} p^{-\lambda_j}}. 
  \end{equation*}
  Then each element $\alpha\in \hat A$ has an expansion
  \begin{equation}
    \label{eq:5}
    \alpha=\alpha_1\epsilon_1+\cdots+\alpha_l\epsilon_l \text{ with } 0\leq \alpha_i< p^{\lambda_i} \text{ for each } i\in \{1,\ldots,l\}.
  \end{equation}
  Let $k=(a,\alpha)\in K$, with $a$ and $\alpha$ as in (\ref{eq:4}) and (\ref{eq:5}) respectively.
  The automorphism of $K$ which takes $e_i\mapsto \epsilon_i$ and $\epsilon_i\mapsto -e_i$ while preserving all other generators $e_j$ and $\epsilon_j$ with $j\neq i$, lies in $\Sp(K)$.
  In terms of coordinates, it has the effect of interchanging $a_i$ and $\alpha_i$ up to sign.
  Using this automorphism, we may arrange that $v(a_i)\leq v(\alpha_i)$ for each $i$.
  Therefore, there exists an integer $b_i$ such that $b_i a_i\equiv \alpha_i\mod p^{\lambda_i}$.
  Let $B_i:A\to \hat A$ be the homomorphism which takes $e_i$ to $b_i\epsilon_i$ and all other generators $e_j$ with $j\neq i$ to $0$.
  Then the automorphism of $K$ which takes $(a,\alpha)$ to $(a,\alpha-B_i(a))$ also lies in $\Sp(K)$.
  This has the effect of changing $\alpha_i$ to $0$.
  Repeating this process for each $i$ allows us to reduce $(a,\alpha)$ to $(a,0)$ as claimed.

  Now, for every automorphism $g$ of $A$, the automorphism $(a,\alpha)\mapsto (g(a),\hat g^{-1}(\alpha))$ lies in $\Sp(K)$ (here $\hat g$ is the automorphism of $\hat A$ defined by $\hat g(\chi)(a)=\chi(g(a))$ for $a\in A$ and $\chi \in \hat A$).
  Such automorphisms can be used to reduce $(a,0)$ further to an element of the form $(a(I),0)$ for some order ideal $I\subset P_\lambda$.
  Since, for distinct $I$'s, these elements are in distinct $\Aut(K)$-orbits, they must also be in distinct $\Sp(K)$-orbits.
\end{proof}

\section{Multiplicity one}
\label{sec:mult-one}

\subsection{Relation to commutativity}
\label{sec:relat-comm}
Suppose that the decomposition of the Weil representation onto irreducible representations is given by
\begin{equation}
  \label{eq:6}
  L^2(A)=\bigoplus_{\pi \in \widehat{\Sp(K)}} m_\pi \Hilb_\pi.
\end{equation}
Then by Schur's lemma, the endomorphism algebra of $L^2(A)$ is a sum of matrix algebras:
\begin{equation*}
  \End_{\Sp(K)}L^2(A) = \bigoplus_{\pi\in \widehat{\Sp(K)}} M_{m_{\pi}\times m_\pi}(\mathbf C).
\end{equation*}
It follows that $m_\pi\leq 1$ for every $\pi\in \widehat{\Sp(K)}$ if and only if the ring $\End_{\Sp(K)}L^2(A)$ of endomorphisms of the Weil representations is commutative.
For each order ideal $I\subset P_\lambda$ let $O_I$ denote the $\Sp(K)$-orbit of $(a(I),0)$ in $K$ and let
\begin{equation*}
  T_I=\sum_{k\in O_I} W_k.
\end{equation*}
By Theorems~\ref{theorem:orbit-mult} and~\ref{theorem:symplectic-orbits}, the set of all $T_I$ as $I$ varies over the order ideals $I\subset P_\lambda$ is a basis of $\End_{\Sp(K)}L^2(A)$.

Let $K_I=A_I\times \hat A_I$ ($\hat A_I$ as in Section~\ref{section:char-sub}) and define
\begin{equation}
  \label{eq:8}
  \Delta_I=\sum_{k\in K_I} W_k.
\end{equation}
Since
\begin{equation*}
  K_I = \coprod_{J\subset I} O_J,
\end{equation*}
we have
\begin{equation*}
  \Delta_I=\sum_{J\subset I} T_J.
\end{equation*}
Thus, the elements $\Delta_I$ are obtained from the basis elements $T_I$ of $\End_{\Sp(K)}L^2(A)$ by a unipotent upper-triangular transformation.
Hence, 
\begin{equation*}
  \{\Delta_I \mid I\in J(P_\lambda)\}
\end{equation*}
is also a basis of $\End_{\Sp(K)}L^2(A)$.
Thus if $\Delta_I$ commutes with $\Delta_J$ for all $I,J\in J(P_\lambda)$, then $\End_{\Sp(K)}L^2(A)$ is a commutative algebra.

Therefore, in order to show that $m_\pi\leq 1$ for each $\pi\in \widehat{\Sp(K)}$, it suffices to show that for any two order ideals $I,J\subset P_\lambda$, $\Delta_I$ and $\Delta_J$ commute.
This will follow from the calculation in Section~\ref{sec:calculation}.

\subsection{Calculation of the product}
\label{sec:calculation}
\begin{lemma}
  \label{lemma:product}
  For any two order ideals $I,J\subset P_\lambda$,
  \begin{equation*}
    \Delta_I\Delta_J=|K_{I\cap J}|\Delta_{(I\cap J)^\perp \cap (I\cup J)}.
  \end{equation*}
\end{lemma}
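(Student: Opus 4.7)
The plan is to expand both sides of the claimed identity in the orthonormal basis of Weyl operators provided by Lemma~\ref{lemma:onb} and match the coefficient of each $W_n$. Using $W_kW_l=c(k,l)W_{k+l}$,
\[
\Delta_I\Delta_J = \sum_{k\in K_I,\,l\in K_J} c(k,l)\,W_{k+l}.
\]
I would first regroup by $n=k+l$. Since $I\mapsto A_I$ is a lattice isomorphism onto the characteristic subgroups of $A$, we have $K_I+K_J=K_{I\cup J}$ and $K_I\cap K_J=K_{I\cap J}$; so $n$ ranges over $K_{I\cup J}$ and the fiber over each such $n$ is a coset of $K_{I\cap J}$. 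Fix one decomposition $n=k_0+l_0$ with $k_0=(x_0,\chi_0)\in K_I$ and $l_0=(y_0,\lambda_0)\in K_J$, and parametrize the coset by $s=(u,\mu)\in K_{I\cap J}$ via $k=k_0+s$, $l=l_0-s$. A short direct calculation with the explicit cocycle (using that $|A|$ is odd, so halving is well-defined) collapses to $c(k_0+s,\,l_0-s)=c(k_0,l_0)\,\mu(a/2)\,\alpha(u/2)^{-1}$, where $n=(a,\alpha)$.

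Summing over $s$ splits into independent sums over $\hat A_{I\cap J}$ and $A_{I\cap J}$. Pontryagin orthogonality, together with the annihilator relations $\hat A_{I\cap J}^\perp=A_{(I\cap J)^\perp}$ and $A_{I\cap J}^\perp=\hat A_{(I\cap J)^\perp}$ (which hold because the involution $\perp$ on order ideals transports exactly to the annihilator pairing), shows the sum vanishes unless $n\in K_{(I\cap J)^\perp}$, and equals $c(k_0,l_0)\,|K_{I\cap J}|$ otherwise. Combined with the constraint $n\in K_{I\cup J}$, the coefficient of $W_n$ in $\Delta_I\Delta_J$ is $c(k_0,l_0)\,|K_{I\cap J}|$ for $n\in K_{(I\cap J)^\perp\cap(I\cup J)}$ and zero elsewhere.

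It remains to show $c(k_0,l_0)=1$. The calculation above already shows this phase depends only on the coset of $k_0$ modulo $K_{I\cap J}$, so I am free to pick any convenient decomposition. Distributivity of the lattice of order ideals gives $A_{(I\cup J)\cap(I\cap J)^\perp}=A_{I\cap(I\cap J)^\perp}+A_{J\cap(I\cap J)^\perp}$, so I would take $k_0\in K_{I\cap(I\cap J)^\perp}$ and $l_0\in K_{J\cap(I\cap J)^\perp}$. Writing $L=I\cap(I\cap J)^\perp$, the character $\chi_0\in\hat A_L$ annihilates $A_{L^\perp}$; by De Morgan for $\perp$ (which follows from $\perp$ being an order-reversing involution on a distributive lattice), $L^\perp=I^\perp\cup(I\cap J)$. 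Hence $\chi_0(y_0)=1$ reduces to the inclusion $J\cap(I\cap J)^\perp\subset I^\perp\cup(I\cap J)$, and $\lambda_0(x_0)=1$ follows symmetrically; together they give $c(k_0,l_0)=1$.

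The main obstacle is this final poset-theoretic inclusion. Using $(I\cap J)^\perp=I^\perp\cup J^\perp$, the only non-automatic part is $J\cap J^\perp\subset I^\perp\cup(I\cap J)$. The key combinatorial input is the reflection $\sigma\colon(v,k)\mapsto(k-1-v,k)$ on $P_\lambda$: it preserves each column of constant $k$, and a direct check from the definition of the order shows that $x$ and $\sigma(x)$ are always comparable in $P_\lambda$. For $x\in J\cap J^\perp$, i.e.\ $x\in J$ but $\sigma(x)\notin J$, the order-ideal property of $J$ forces $\sigma(x)>x$. Then for any order ideal $I$: either $x\in I$, giving $x\in I\cap J$; or $x\notin I$, in which case $\sigma(x)\notin I$ as well (since $x\le\sigma(x)$ and $I$ is down-closed), so $x\notin\sigma(I)$, i.e.\ $x\in I^\perp$. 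This establishes the inclusion and completes the proof.
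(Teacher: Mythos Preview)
Your proof is correct. The overall strategy---expand in Weyl operators, regroup by $n=k+l$, and evaluate the fiber sum via orthogonality to obtain $c(k_0,l_0)\,|K_{I\cap J}|$ supported on $K_{(I\cap J)^\perp\cap(I\cup J)}$---matches the paper's argument essentially verbatim, the only cosmetic difference being that the paper uses bimultiplicativity of $c$ to write the fiber sum as $c(x_0,y_0)\sum_{l}c(l,k)$ rather than your explicit coordinate computation.

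The genuine divergence is in the final step, showing the phase $c(k_0,l_0)=1$. The paper observes that $\Delta_I\Delta_J\in\End_{\Sp(K)}L^2(A)$, so its Weyl coefficients are constant on $\Sp(K)$-orbits; invoking Theorem~\ref{theorem:symplectic-orbits}, it reduces to $n=(a(I'),0)$ for an order ideal $I'\subset I\cup J$, and then splits $a(I')=a(I'_1)+a(I'_2)$ by partitioning $\max I'$ between $I$ and $J$, so that both summands have trivial character component and $c=1$ is immediate. Your route avoids the orbit classification entirely: you use distributivity of $J(P_\lambda)$ to choose $k_0\in K_{I\cap(I\cap J)^\perp}$ and $l_0\in K_{J\cap(I\cap J)^\perp}$, reduce the vanishing of the cocycle to the poset inclusion $J\cap J^\perp\subset I^\perp\cup(I\cap J)$, and then prove that inclusion via the reflection $\sigma$ and the observation that $x$ and $\sigma(x)$ are always comparable. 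Your argument is more self-contained---it does not rely on Theorem~\ref{theorem:symplectic-orbits}, which the paper's does---at the cost of a short combinatorial lemma about $P_\lambda$. The paper's version is quicker but imports more machinery; yours would stand even if the orbit classification were proved later.
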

\begin{proof}
  The coefficient of $W_k$ in $\Delta_I\Delta_J$ is
  \begin{equation}
    \label{eq:7}
    \sum_{x\in K_I,y\in K_J, x+y=k} c(x,y).
  \end{equation}
  From the definition of $I(a)$, it is easy to see that $I(a+b)\subset I(a)\cup I(b)$.
  Therefore, $A_I+A_J\subset A_{I\cup J}$ and hence $K_I+K_J\subset K_{I\cup J}$.
  It follows that the sum (\ref{eq:7}) is $0$ unless $k\in K_{I\cup J}$.
  Suppose $x_0\in K_I$ and $y_0\in K_J$ are such that $x_0+y_0=k$.
  Then the sum (\ref{eq:7}) becomes
  \begin{eqnarray*}
    \sum_{l\in K_I\cap K_J} c(x_0+l,y_0-l) & = & c(x_0,y_0)\sum_{l\in K_I\cap K_J} c(l,y_0)c(l,x_0)\\
    & = & c(x_0,y_0) \sum_{l\in K_I\cap K_J} c(l,k)\\
    & = &
    \begin{cases}
      c(x_0,y_0)|K_I\cap K_J| &\text{ if } k\in (K_I\cap K_J)^\perp,\\
      0 &\text{ otherwise}.
    \end{cases}
  \end{eqnarray*}
  Observe that $K_I\cap K_J=K_{I\cap J}$.
  It remains to show that, for every $k\in K_{I\cup J}$, there exist $x_0\in K_I$ and $y_0\in K_J$ such that $k=x_0+y_0$ and $c(x_0,y_0)=1$.
  Since $\Delta_I\Delta_J$ is constant on $\Sp(K)$-orbits in $K$, we may use Theorem~\ref{theorem:symplectic-orbits} to assume that $k=(a(I'),0)$ for some order ideal $I'\subset I\cup J$.
  We have $\max I'\subset I\cup J$.
  Let $I'_1$ be the order ideal generated by $(\max I')\cap I$, and $I'_2$ be the order ideal generated by $(\max I')-I$.
  Then $a(I')=a(I'_1)+a(I'_2)$.
  Clearly $(a(I'_1),0)$ and $(a(I'_2),0)$ have the properties required of $x_0$ and $y_0$.
\end{proof}

\subsection{Multiplicity one}
We have proved
\begin{theorem}
  \label{theorem:multiplicity-free}
  In the decomposition (\ref{eq:6}) of the Weil representation of $\Sp(K)$, $m_\pi\leq 1$ for every isomorphism class $\pi$ of irreducible representations of $\Sp(K)$.
\end{theorem}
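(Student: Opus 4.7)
The plan is to deduce the theorem directly from the commutativity criterion recorded in Section~\ref{sec:relat-comm} together with the product formula of Lemma~\ref{lemma:product}. As noted there, a standard application of Schur's lemma to the decomposition (\ref{eq:6}) shows that $m_\pi\leq 1$ for every $\pi$ if and only if the commutant $\End_{\Sp(K)}L^2(A)$ is a commutative algebra: the commutant is isomorphic to $\bigoplus_\pi M_{m_\pi}(\C)$, which is commutative precisely when every $m_\pi$ is at most $1$. Moreover, it has already been established that the operators $\Delta_I$, as $I$ ranges over order ideals of $P_\lambda$, form a basis of $\End_{\Sp(K)}L^2(A)$. So it is enough to show that any two such basis elements commute.

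First I would apply Lemma~\ref{lemma:product} to $\Delta_I\Delta_J$ to get
\begin{equation*}
  \Delta_I\Delta_J = |K_{I\cap J}|\,\Delta_{(I\cap J)^\perp\cap(I\cup J)}.
\end{equation*}
Then I would apply the same lemma to $\Delta_J\Delta_I$. Since both the intersection $I\cap J$ and the union $I\cup J$ of order ideals are symmetric in their arguments, the right-hand side is invariant under the swap $I\leftrightarrow J$, and therefore
\begin{equation*}
  \Delta_I\Delta_J = \Delta_J\Delta_I.
\end{equation*}
This gives commutativity of $\End_{\Sp(K)}L^2(A)$ on a basis, hence on the whole algebra, and the multiplicity-free statement follows.

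All of the substantive work has been carried out in Lemma~\ref{lemma:product} (and, earlier, in the identification of characteristic subgroups with order ideals and of $\Sp(K)$-orbits in $K$ with order ideals via Theorem~\ref{theorem:symplectic-orbits}). At this stage there is no remaining obstacle: the theorem is a one-line consequence of the symmetry of the product formula under $I\leftrightarrow J$.
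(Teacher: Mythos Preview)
Your proposal is correct and follows exactly the paper's own argument: Section~\ref{sec:relat-comm} reduces the theorem to commutativity of the $\Delta_I$'s, and Lemma~\ref{lemma:product} supplies a product formula symmetric in $I$ and $J$, whence the theorem. There is nothing to add.
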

Every $\Sp(K)$-invariant subspace is completely determined by the subset of $\widehat{\Sp(K)}$ consisting of representations that occur in it.
Therefore
\begin{cor}
  \label{cor:Boolean-lattice}
  The set of $\Sp(K)$-invariant subspaces of $L^2(A)$, partially ordered by inclusion, forms a finite Boolean lattice.
\end{cor}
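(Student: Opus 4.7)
The plan is to deduce the corollary directly from Theorem~\ref{theorem:multiplicity-free} together with Schur's lemma and the fact that $L^2(A)$ is finite dimensional. Since the statement already asserts what invariant subspaces look like ("completely determined by the subset of $\widehat{\Sp(K)}$ consisting of representations that occur in it"), I just need to turn that assertion into a precise lattice isomorphism with a Boolean lattice.

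First I would record the setup. By Theorem~\ref{theorem:multiplicity-free}, the decomposition~(\ref{eq:6}) is multiplicity free, so writing $S=\{\pi\in \widehat{\Sp(K)}\mid m_\pi=1\}$ (a finite set, since $\dim L^2(A)=|A|<\infty$), one has $L^2(A)=\bigoplus_{\pi\in S}\Hilb_\pi$, with each $\Hilb_\pi$ irreducible and pairwise non-isomorphic. The key observation is that, under multiplicity one, each $\Hilb_\pi$ is the \emph{unique} $\Sp(K)$-invariant subspace of $L^2(A)$ isomorphic to $\pi$: if $V\subset L^2(A)$ were any other such copy, the projection of $V$ onto the $\pi'$-summand would be an intertwiner $\pi\to \pi'$, which by Schur's lemma vanishes for $\pi'\neq\pi$ and is a scalar for $\pi'=\pi$, forcing $V=\Hilb_\pi$.

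Next I would argue that every invariant subspace is a sum of these canonical irreducible pieces. Given an $\Sp(K)$-invariant subspace $V\subset L^2(A)$, complete reducibility (available because the representation is unitary and finite-dimensional) gives $V=\bigoplus_{\pi\in T_V}V_\pi$ for some $T_V\subset S$, with each $V_\pi\subset L^2(A)$ isomorphic to $\pi$; by the uniqueness just established, $V_\pi=\Hilb_\pi$, so $V=\bigoplus_{\pi\in T_V}\Hilb_\pi$. Conversely, any subset $T\subset S$ gives such an invariant subspace. This yields a bijection between the power set $2^S$ and the set $\mathcal{L}$ of $\Sp(K)$-invariant subspaces of $L^2(A)$.

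Finally I would verify that this bijection $T\mapsto \bigoplus_{\pi\in T}\Hilb_\pi$ is an isomorphism of posets: $T\subset T'$ if and only if the corresponding direct sums are contained one in the other. Hence $\mathcal{L}$ is order-isomorphic to the Boolean lattice $2^S$, which is the desired conclusion. There is no real obstacle here; the only subtle point worth spelling out is the uniqueness of $\Hilb_\pi$ as a subrepresentation (Schur's lemma applied to the projections), which is precisely what the multiplicity-one hypothesis buys us.
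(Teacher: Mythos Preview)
Your proof is correct and is precisely the argument the paper has in mind: the paper simply records that every $\Sp(K)$-invariant subspace is determined by the set of irreducibles it contains and states the corollary without further detail, while you have spelled out the Schur-lemma and complete-reducibility steps that justify this.
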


\section{Elementary invariant subspaces}
\label{sec:construction-invariant-subs}
In this section, we construct some elementary invariant subspaces for the Weil representation of $\Sp(K)$ on $L^2(A)$.
In Section~\ref{sec:poset-invar-sub}, we will use these subspaces and the results of Section~\ref{sec:components} to construct enough invariant subspaces to carve out all the irreducible subspaces.

\subsection{Small order ideals}
\begin{defn}
  [Small order ideal]
  \label{defn:small-ideal}
  An order ideal $I\subset P_\lambda$ is said to be small if $I\subset I^\perp$, with $I^\perp$ as in Section~\ref{section:char-sub}.
\end{defn}
For example, the order ideal $I$ in Figure~\ref{fig:2} is small.
\subsection{Interpreting some $\Delta_I$'s}
\begin{lemma}
  \label{lemma:Delta-I}
  For each order ideal $I\subset P_\lambda$, let $\Delta_I$ be as in (\ref{eq:8}).
  \begin{enumerate}
  \item \label{item:2}$\Delta_{P_\lambda}f(u)=|A|f(-u)$ for all $f\in L^2(A)$ and $u\in A$.
  \item \label{item:1}For every small order ideal $I\subset P_\lambda$, $|A_I|^{-2}\Delta_I$ is the orthogonal projection onto the subspace of $L^2(A)$ consisting of functions supported on $A_{I^\perp}$ and invariant under translations in $A_I$.
  \end{enumerate}
\end{lemma}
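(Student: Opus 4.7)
\medskip

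The plan is to unwind the definition $\Delta_I f(u)=\sum_{(x,\chi)\in K_I}\chi(u-x/2)f(u-x)$ and evaluate the resulting double sum directly. Splitting the sum over $x\in A_I$ from the sum over $\chi\in \hat A_I$ gives
\[
  \Delta_I f(u)=\sum_{x\in A_I} f(u-x)\sum_{\chi\in\hat A_I}\chi(u-x/2),
\]
(here $x/2$ makes sense because $|A|$ is odd). The inner character sum is the one to understand; then both parts of the lemma fall out.

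For part~(\ref{item:2}) I would take $I=P_\lambda$, so that $K_I=K$ and $\hat A_I=\hat A$. Orthogonality of characters in $\hat A$ forces $u-x/2=0$, i.e.\ $x=2u$, and produces $|A|f(-u)$.

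For part~(\ref{item:1}) I would use Pontryagin duality to identify $\hat A_I$ with the character group of $A/A_{I^\perp}$: the defining relation $A_I^\perp=\hat A_{I^\perp}$ of Section~\ref{section:char-sub}, together with biduality between $A$ and $\hat{\hat A}$, forces the annihilator in $A$ of $\hat A_I$ to be exactly $A_{I^\perp}$. Consequently
\[
  \sum_{\chi\in\hat A_I}\chi(w)=|A_I|\,\mathbf{1}_{A_{I^\perp}}(w).
\]
Now smallness enters: $I\subset I^\perp$ gives $A_I\subset A_{I^\perp}$, so $x/2\in A_{I^\perp}$ for every $x\in A_I$ (again using that $|A|$ is odd, so $A_{I^\perp}$ is closed under halving), and hence $\mathbf{1}_{A_{I^\perp}}(u-x/2)=\mathbf{1}_{A_{I^\perp}}(u)$. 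Setting $P=|A_I|^{-2}\Delta_I$, the formula collapses to
\[
  Pf(u)=\mathbf{1}_{A_{I^\perp}}(u)\cdot\frac{1}{|A_I|}\sum_{a\in A_I}f(u-a),
\]
which is manifestly ``restrict to $A_{I^\perp}$, then average over $A_I$.'' It is immediate from this description that $P$ maps $L^2(A)$ into $V:=\{f:\mathrm{supp}(f)\subset A_{I^\perp},\ f\text{ is }A_I\text{-invariant}\}$ and fixes every element of $V$; hence $P$ is a (linear) projection onto $V$. For orthogonality I would observe that $c(k,-k)=1$, so $W_k^{*}=W_{-k}$, and since $K_I$ is a subgroup of $K$ invariant under negation, $\Delta_I^{*}=\Delta_I$; a self-adjoint idempotent is an orthogonal projection.

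The only genuinely non-routine step is identifying the annihilator of $\hat A_I$ in $A$ as $A_{I^\perp}$. This is dual to the definition of $I^\perp$ and I expect it to require one careful invocation of Pontryagin biduality; everything else is bookkeeping with the orthogonality relations for characters.
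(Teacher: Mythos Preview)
Your proof is correct and follows essentially the same route as the paper's: both expand $\Delta_I f(u)$ as a double sum, evaluate the inner character sum over $\hat A_I$ using orthogonality (which is where the condition $u-x/2\in A_{I^\perp}$ appears), and then read off both parts from the resulting closed form. The paper compresses the small-ideal case into the single identity $\Delta_I f(u)=|A_I|\sum_{x\in (u+A_I)\cap(-u+A_{I^\perp})} f(x)$ and leaves the word ``orthogonal'' implicit, whereas you spell out the self-adjointness of $\Delta_I$ via $W_k^*=W_{-k}$; that is a welcome addition but not a different argument.
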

\begin{proof}
  For any order ideal $I\subset P_\lambda$, we have
  \begin{equation*}
    \Delta_I f(u)  = \sum_{x\in A_I} \sum_{\chi \in \hat A_I} \chi(u-x/2) f(u-x).
  \end{equation*}
  The inner sum is $f(u-x)$ times the sum of values of a character of $\hat A_I$, which vanishes if this character is non-trivial, namely if $u-x/2\notin A_{I^\perp}$, and is $|A_I|$ otherwise.
  Therefore,
  \begin{eqnarray}
    \nonumber
    \Delta_If(u) & = & |A_I|\sum_{x\in A_I\cap (2u+A_{I^\perp})} f(u-x)\\
    \label{eq:17}
    & = & |A_I|\sum_{x\in (u+A_I)\cap (-u+A_{I^\perp})} f(x).
  \end{eqnarray}
  Taking $I=P_\lambda$ in (\ref{eq:17}) gives \ref{item:2}.

  Now suppose that $I\subset I^\perp$.
  If $u\notin A_{I^\perp}$ then $(u+A_I)\cap (-u+A_{I^\perp})=\emptyset$, so that $\Delta_If(u)=0$.
  If $u\in A_{I^\perp}$, then the sum (\ref{eq:17}) is over $u+A_I$, so $|A_I|^{-2}\Delta_I$ is the averaging over $A_I$-cosets
  from which \ref{item:1} follows.
\end{proof}

\subsection{Even and odd functions}
\label{sec:even-odd-functions}
\begin{theorem}
  \label{theorem:even-odd}
  The subspaces of $L^2(A)$ consisting of even and odd functions are invariant under $\Sp(K)$.
\end{theorem}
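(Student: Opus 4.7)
The plan is to realize the parity involution as (up to a scalar) an element of the commuting algebra of the Weil representation; invariance of the even and odd subspaces is then immediate, since they are the $\pm 1$-eigenspaces of this involution.

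Concretely, let $P$ be the operator $Pf(u) = f(-u)$ on $L^2(A)$. The first step is to identify $P$ with $|A|^{-1}\Delta_{P_\lambda}$. Here $P_\lambda$ is the whole poset, so $A_{P_\lambda} = A$ and $\hat A_{P_\lambda} = \hat A$, giving $K_{P_\lambda} = K$. Lemma~\ref{lemma:Delta-I}\ref{item:2} applies to $I = P_\lambda$ and yields exactly $\Delta_{P_\lambda}f(u) = |A|f(-u)$, i.e.\ $P = |A|^{-1}\Delta_{P_\lambda}$.

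The second step is to observe that $\Delta_{P_\lambda}$ lies in $\End_{\Sp(K)} L^2(A)$. In the Weyl expansion (\ref{eq:3}), the coefficients of $\Delta_{P_\lambda} = \sum_{k\in K} W_k$ are all equal to $1$ and hence trivially constant on $\Sp(K)$-orbits in $K$, so Theorem~\ref{theorem:orbit-mult} gives the claim. (Equivalently, $P_\lambda$ is itself one of the order ideals producing a basis vector of the commuting algebra as in Section~\ref{sec:relat-comm}.)

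It follows that $P$ commutes with every $W_g$ for $g \in \Sp(K)$, and therefore so do the orthogonal projectors $\tfrac12(\mathrm{Id} \pm P)$ onto the spaces of even and odd functions. These two subspaces are thus $\Sp(K)$-invariant. There is no real obstacle; the only point requiring a moment's care is the identification $K_{P_\lambda} = K$ and the applicability of Lemma~\ref{lemma:Delta-I}\ref{item:2} to the non-small ideal $P_\lambda$, both of which follow directly from the construction in Section~\ref{section:char-sub} and the proof of Lemma~\ref{lemma:Delta-I}.
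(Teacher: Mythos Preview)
Your proof is correct and follows essentially the same route as the paper: identify the parity involution with $|A|^{-1}\Delta_{P_\lambda}$ via Lemma~\ref{lemma:Delta-I}\ref{item:2}, invoke Theorem~\ref{theorem:orbit-mult} to see that this operator commutes with $\Sp(K)$, and conclude that the eigenspace projectors $\tfrac12(\mathrm{id}\pm|A|^{-1}\Delta_{P_\lambda})$ have $\Sp(K)$-invariant images. The paper's argument is identical in substance, only slightly terser.
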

\begin{proof}
  By \ref{item:2},
  \begin{equation}
    \label{eq:22}
    [(\mathrm{id}_{L^2(A)}\pm |A|^{-1}\Delta_{P_\lambda})/2] f(u)=(f(u)\pm f(-u))/2.
  \end{equation}
  These operators are the orthogonal projections onto the subspaces of even and odd functions in $L^2(A)$.
  Since these operators commute with $\Sp(K)$ (by Theorem~\ref{theorem:orbit-mult}), their images are $\Sp(K)$-invariant subspaces of $L^2(A)$.
\end{proof}
\begin{remark}
  [The Weil representation is an ordinary representation]
  \label{remark:ordinary-rep}
  The subspaces of even and odd functions on $A$ have dimensions $(|A|+1)/2$ and $(|A|-1)/2$ respectively.
  For each $g\in \Sp(K)$, let $W_g^+$ and $W_g^-$ denote the restrictions of $W_g$ to these spaces.
  Taking the determinants of the identities
  \begin{equation*}
    W_{g_1}^\pm W_{g_2}^\pm=c(g_1,g_2)W_{g_1 g_2}^\pm
  \end{equation*}
  gives the identities
  \begin{gather*}
    \det W_{g_1}^+\det W_{g_2}^+=c(g_1,g_2)^{(|A|+1)/2}\det W_{g_1g_2}^+\\
    \det W_{g_1}^-\det W_{g_2}^-=c(g_1,g_2)^{(|A|-1)/2}\det W_{g_1g_2}^-.
  \end{gather*}
  Dividing the first equation by the second and rearranging gives:
  \begin{equation*}
    c(g_1,g_2)=\frac{\alpha(g_1)\alpha(g_2)}{\alpha(g_1g_2)}
  \end{equation*}
  for all $g_1,g_2\in \Sp(K)$, when $\alpha:G\to U(1)$ is defined by
  \begin{equation*}
    \alpha(g)=\det(W_g^+)/\det(W_g^-) \text{ for all } g\in \Sp(K).
  \end{equation*}
  Therefore, if each $W_g$ is replaced by $\alpha(g)^{-1}W_g$, then $g\mapsto W_g$ is a representation of $\Sp(K)$ on $L^2(A)$.
  This argument seems to be well known. 
  It has appeared before in Adler-Ramanan \cite[Appendix I]{MR1621185}, and again in Cliff-McNeilly-Szechtman \cite{MR1783635}.
\end{remark}
\subsection{Invariant spaces corresponding to small order ideals}
\label{sec:invar-spac-corr}
Since $\Delta_I$ commutes with $\Sp(K)$, its image is an $\Sp(K)$-invariant subspace of $L^2(A)$.
An immediate consequence of \ref{item:1} is the following theorem:
\begin{theorem}
  \label{theorem:characteristic-invariant}
  For each small order ideal $I\subset P_\lambda$, the subspace of $L^2(A)$ consisting of functions supported on $A_{I^\perp}$ which are invariant under translations in $A_I$ is an $\Sp(K)$-invariant subspace of $L^2(A)$.
\end{theorem}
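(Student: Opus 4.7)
The plan is to realise the subspace in question as the image of an orthogonal projection operator that lies in $\End_{\Sp(K)} L^2(A)$; the image of any such projection is automatically an $\Sp(K)$-invariant subspace.

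More precisely, let $V \subset L^2(A)$ denote the subspace of functions supported on $A_{I^\perp}$ which are invariant under translations in $A_I$. Because $I$ is small, part~\ref{item:1} of Lemma~\ref{lemma:Delta-I} applies and identifies the operator $P := |A_I|^{-2}\Delta_I$ as the orthogonal projection of $L^2(A)$ onto $V$. It therefore suffices to show that $P$, equivalently $\Delta_I$, commutes with every operator $W_g$ for $g \in \Sp(K)$.

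For this step I would appeal directly to the discussion preceding the statement in Section~\ref{sec:relat-comm}. By Theorems~\ref{theorem:orbit-mult} and~\ref{theorem:symplectic-orbits}, each orbit-sum $T_J = \sum_{k \in O_J} W_k$ lies in $\End_{\Sp(K)} L^2(A)$, and the identity $\Delta_I = \sum_{J \subset I} T_J$ expresses $\Delta_I$ as a $\C$-linear combination of such orbit-sums. Hence $\Delta_I \in \End_{\Sp(K)} L^2(A)$, so $P$ commutes with the Weil representation, and its image $V$ is $\Sp(K)$-invariant.

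I do not anticipate any substantive obstacle: the argument is just a direct repackaging of results already established. The one point worth flagging is that the smallness hypothesis $I \subset I^\perp$ is indispensable: it is precisely what allows part~\ref{item:1} of Lemma~\ref{lemma:Delta-I} to identify the image of $P$ with the explicit subspace $V$. For an arbitrary order ideal, $\Delta_I$ still commutes with $\Sp(K)$, but its image no longer admits the clean intrinsic description asserted in the theorem.
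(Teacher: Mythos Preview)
Your proposal is correct and follows exactly the paper's own argument: the paper observes that $\Delta_I$ commutes with $\Sp(K)$ (as established in Section~\ref{sec:relat-comm}), hence its image is $\Sp(K)$-invariant, and then invokes part~\ref{item:1} of Lemma~\ref{lemma:Delta-I} to identify that image with the stated subspace. Your write-up simply makes each of these steps explicit, including the observation that smallness of $I$ is what triggers \ref{item:1}.
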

\begin{remark}
  [Alternative description]
  For $f\in L^2(A)$ recall that its Fourier transform is the function on $\hat A$ defined by
  \begin{equation*}
    \hat f(\chi)=\sum_{a\in A} f(a)\overline{\chi(a)} \text{ for each } \chi\in \hat A.
  \end{equation*}
  For any subgroup $B$ of $A$, the Fourier transforms of functions invariant under translations in $B$ are the functions supported on the annihilator subgroup $B^\perp$ of $A$ (consisting of characters which vanish on $B$), and Fourier transforms of functions supported on $B$ are the functions which are invariant under $B^\perp$.
  Therefore, the functions supported on $A_{I^\perp}$ which are invariant under $A_I$ are precisely the functions supported on $A_{I^\perp}$ whose Fourier transforms are supported on $\hat A_{I^\perp}$.
  They are also the functions invariant under translations in $A_I$ whose Fourier transforms are invariant under translations in $\hat A_I$.
  \label{remark:alternative}
\end{remark}
Identify $L^2(A_{I^\perp}/A_I)$ with the space of functions in $L^2(A)$ which are supported on $A_{I^\perp}$ and invariant under translations in $A_I$.
Let $K(I)=A_{I^\perp}/A_I\times \widehat{A_{I^\perp}/A_I}$.
$K(I)$ can be identified with $(A_I{^\perp}\times \hat A_{I^\perp})/(A_I\times \hat A_I)$.
Thus $K(I)$ is a quotient of one characteristic subgroup of $K$ by another.
Therefore the action $\Sp(K)$ on $K$ descends to an action on $K(I)$ giving rise to a homomorphism $\Sp(K)\to \Sp(K(I))$.
The defining condition (\ref{eq:1}) for the Weil representation ensures:
\begin{theorem}
  \label{theorem:invar-I}
  For every small order ideal $I\subset P_\lambda$, the Weil representation of $\Sp(K)$ on $L^2(A_{I^\perp}/A_I)$ is projectively equivalent to the representation obtained by composing the Weil representation of $\Sp(K(I))$ on $L^2(A_{I^\perp}/A_I)$ with the homomorphism $\Sp(K)\to \Sp(K(I))$.
\end{theorem}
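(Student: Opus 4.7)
The plan is to invoke the uniqueness clause of the Stone--von Neumann--Mackey theorem (Theorem~\ref{theorem:SvN}). Set $\Hilb_I := L^2(A_{I^\perp}/A_I)$, identified (as in Remark~\ref{remark:alternative}) with the subspace of $L^2(A)$ consisting of functions supported on $A_{I^\perp}$ and invariant under $A_I$-translations; by Theorem~\ref{theorem:characteristic-invariant} this subspace carries a unitary representation of $\Sp(K)$ via $g\mapsto W_g|_{\Hilb_I}$.

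The first step is to analyze the Weyl operators themselves on $\Hilb_I$. A direct calculation from $W_kf(u)=\chi(u-x/2)f(u-x)$ with $k=(x,\chi)$ shows that for $k\in K_{I^\perp}$ the operator $W_k$ preserves $\Hilb_I$, since $A_{I^\perp}$ is stable under translation by $A_{I^\perp}$ and characters in $\hat A_{I^\perp}=A_I^\perp$ are trivial on $A_I$; and that for $k\in K_I$ the restriction $W_k|_{\Hilb_I}$ is the identity, since $A_I$-translations fix $A_I$-invariant functions and characters in $\hat A_I=A_{I^\perp}^\perp$ are trivial on $A_{I^\perp}$. Hence $k\mapsto W_k|_{\Hilb_I}$ factors through $K(I)=K_{I^\perp}/K_I$, and on comparing multiplication rules it coincides (up to the standard Heisenberg cocycle) with the Weyl operators of $K(I)$ acting on $L^2(A_{I^\perp}/A_I)$.

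The second step is to exploit (\ref{eq:1}). Since $K_I$ and $K_{I^\perp}$ are characteristic subgroups of $K$, every $g\in\Sp(K)$ preserves them and descends to an automorphism $\bar g$ of $K(I)$; a routine check shows the cocycle $c$ descends as well, so $\bar g\in\Sp(K(I))$, giving the homomorphism $\Sp(K)\to\Sp(K(I))$ in the statement. Restricting (\ref{eq:1}) for $k\in K_{I^\perp}$ to $\Hilb_I$ and passing to the quotient yields
\begin{equation*}
  W_g|_{\Hilb_I}\,\tilde W_{\bar k}\,(W_g|_{\Hilb_I})^* = \tilde W_{\bar g(\bar k)}\quad\text{for all } \bar k\in K(I),
\end{equation*}
where $\tilde W_{\bar k}$ denotes the descended Weyl operator. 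This is precisely the defining identity for the Weil operator $W_{\bar g}$ on $L^2(A_{I^\perp}/A_I)$, so Theorem~\ref{theorem:SvN} forces $W_g|_{\Hilb_I}=\chi(g)W_{\bar g}$ for some scalar $\chi(g)\in U(1)$.

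The final step is to check that $g\mapsto \chi(g)$ is a homomorphism, which is what is needed for projective equivalence in the sense of Definition~\ref{defn:proj-equiv}. After normalizing both Weil representations to be ordinary via Remark~\ref{remark:ordinary-rep}, multiplicativity of $\chi$ follows by comparing $W_{g_1g_2}|_{\Hilb_I}$ with $(W_{g_1}|_{\Hilb_I})(W_{g_2}|_{\Hilb_I})$ and using $\overline{g_1g_2}=\bar g_1\bar g_2$. I expect the main obstacle to be the bookkeeping of the first step, together with the verification that $c$ descends to a well-defined cocycle on $K(I)$; once these are in place the conclusion is essentially automatic from Theorem~\ref{theorem:SvN}.
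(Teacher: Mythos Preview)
Your proposal is correct and follows essentially the same approach as the paper: the paper's entire justification is the single sentence ``The defining condition (\ref{eq:1}) for the Weil representation ensures'', and what you have written is precisely the unpacking of that sentence --- showing that the Weyl operators for $K$ restrict to $\Hilb_I$ and descend through $K_{I^\perp}/K_I$ to the Weyl operators for $K(I)$, so that (\ref{eq:1}) restricted to $\Hilb_I$ becomes the defining relation for the Weil operator $W_{\bar g}$, whence Stone--von~Neumann uniqueness forces $W_g|_{\Hilb_I}$ to agree with $W_{\bar g}$ up to a scalar. The paper leaves all three of your steps to the reader.
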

\section{Component decomposition}
\label{sec:components}

\subsection{Connected components of a \poset}
A \poset{} is said to be connected if its Hasse diagram is a connected graph.
A connected component of a \poset{} is a maximal connected induced subposet.
Every \poset{} can be written as the disjoint union of its connected components in the sense of \cite[Section~3.2]{MR1442260}.
Denote the set of connected components of a poset $P$ by $\pi_0(P)$.

\subsection{Connected components of $J-I$}
\label{sec:conn-comp-j-i}
Suppose that $I\subset J$ are two order ideals in $P_\lambda$.
Each connected component $C\in \pi_0(J-I)$ determines a segment (namely, a contiguous set of integers) $S_C$ in $\{1,\ldots,l\}$:
\begin{equation*}
  S_C=\{1\leq k\leq l|(v,k)\in C\text{ for some } v\}.
\end{equation*}
The $S_C$'s are pairwise disjoint, but their union may be strictly smaller than $\{1,\ldots,l\}$.
Write $S_0$ for the complement of $\coprod_{C\in \pi_0(I^\perp-I)}S_C$ in $\{1,\ldots,l\}$.
It will be convenient to write $\tilde \pi_0(I^\perp-I)=\pi_0(I^\perp-I)\coprod \{0\}$.
Define partitions $\lambda(C)=(\lambda_k|k\in S_C)$ for each $C\in \tilde \pi_0(I^\perp-I)$.
Then $P_{\lambda(C)}$ is the induced subposet of $P_\lambda$ consisting of those pairs $(v,k)\in P_\lambda$ for which $k\in S_C$.
Let $I(C)$ and $J(C)$ be the ideals in $P_{\lambda(C)}$ obtained by intersecting $I$ and $J$ respectively with $P_{\lambda(C)}$.

For example, if $\lambda=(5,4,4,1)$ and $I$ is the order ideal in the diagram on the left in Figure~\ref{fig:2} and $J=I^\perp$, then $I^\perp-I$ is depicted in the diagram on the left in Figure~\ref{fig:3}.
As the diagram on the right shows, the induced subposet $I^\perp-I$ has two connected components, $C_1$ and $C_2$, with $\lambda(C_1)=(5)$ and $\lambda(C_2)=(1)$. Moreover, $\lambda(0)=(4,4)$.
\begin{figure}
  \begin{equation*}
    \begin{array}{ccc}
      \begin{xy}
        (0,0)*{\circ};
        (5,5)*{\circ}**@{-};
        (0,10)*{\circ}**@{-};
        (5,15)*{\circ}**@{-};
        (0,20)*{\bullet}**@{-};
        (5,25)*{\circ}**@{-};
        (0,30)*{\circ}**@{-};
        (5,35)*{\circ}**@{-};
        (0,40)*{\circ}**@{-};
        (5,5)*{\circ};
        (20,20)*{\bullet}**@{-};
        (5,35)*{\circ}**@{-};
      \end{xy}
      &\quad\quad\quad
      &
      \begin{xy}
        (0,0)*{};
        (0,20)*{\bullet};
        (0,40)*{};
        (20,20)*{\bullet};
      \end{xy}\\
      I^\perp-I\text{ inside } P_{5,4,4,1} & & I^\perp-I\text{ by itself}
    \end{array}
  \end{equation*}
  \caption{}
  \label{fig:3}
\end{figure}
\begin{lemma}
  \label{lemma:ideals-components}
  Let $I\subset J$ be two order ideals in $P_\lambda$.
  For each $C\in \pi_0(J-I)$ let $L(C)$ be an order ideal in $C$.
  Let
  \begin{equation*}
    L=I\coprod \Big(\coprod_{C\in \pi_0(J-I)} L(C)\Big).
  \end{equation*}
  Then $L$ is an order ideal in $P_\lambda$.
\end{lemma}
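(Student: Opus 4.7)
The plan is to check the defining property of an order ideal directly: assume $x \in L$ and $y \leq x$ in $P_\lambda$, and show $y \in L$. The proof splits along the disjoint decomposition $L = I \sqcup \coprod_C L(C)$.

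First I would dispose of the easy case: if $x \in I$, then since $I$ is an order ideal of $P_\lambda$, we immediately get $y \in I \subseteq L$. So assume $x \in L(C)$ for some $C \in \pi_0(J-I)$. Then $x \in C \subseteq J - I \subseteq J$, and since $J$ is an order ideal of $P_\lambda$, we have $y \in J$. If $y \in I$ we are done, so we may assume $y \in J - I$.

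The remaining (and main) task is to show that $y$ lies in the same connected component $C$ as $x$; once that is established, $y \leq x$ holds in the induced subposet $C$ and we can use the hypothesis that $L(C)$ is an order ideal of $C$ to conclude $y \in L(C) \subseteq L$. To prove $y$ and $x$ are in the same component of $J-I$, I would pick any saturated chain
\begin{equation*}
y = y_0 \lessdot y_1 \lessdot \cdots \lessdot y_n = x
\end{equation*}
in $P_\lambda$ and verify that every intermediate $y_i$ belongs to $J-I$. That each $y_i$ lies in $J$ is immediate from $y_i \leq x \in J$ and $J$ being an order ideal. That each $y_i$ lies outside $I$ follows from $y \leq y_i$ together with $y \notin I$ and $I$ being an order ideal: if $y_i \in I$ then $y \in I$, contradicting $y \in J-I$. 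Finally, a cover relation $y_i \lessdot y_{i+1}$ in $P_\lambda$ between two elements of the induced subposet $J-I$ is automatically a cover relation in $J-I$ (there is nothing strictly between them even in the larger poset), so the chain is a path in the Hasse diagram of $J-I$ from $y$ to $x$, placing both endpoints in the same connected component.

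I do not expect any real obstacle: the proof is essentially a bookkeeping exercise. The only point that requires a moment's thought is the last one, namely the stability of the chain from $y$ to $x$ inside $J-I$, which rests on the dual fact that $I$ and $J$ are respectively an order ideal (closed downward, so one cannot dip below a non-$I$ element into $I$ and come back up) and an order ideal (closed downward, so $x \in J$ keeps the chain in $J$).
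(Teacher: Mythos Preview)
Your proof is correct and essentially unpacks the paper's one-line argument. The paper observes that $\coprod_C L(C)$ is an order ideal in the induced subposet $J-I$ (because connected components are mutually incomparable) and that the union of such an ideal with $I$ is automatically an order ideal in $P_\lambda$; your saturated-chain argument is precisely a hands-on verification of these two facts.
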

\begin{proof}
  Since $\coprod_{C\in \pi_0(J-I)} L(C)$ is an order ideal in $J-I$, its union with $I$ is an order ideal in $P_\lambda$.
\end{proof}
\begin{cor}
  \label{cor:ideal-components}
  If $I\subset J$ are two order ideals in $P_\lambda$ and $C$ and $D$ are distinct components of $J-I$,
  then the intersection with $P_{\lambda(C)}$ of the order ideal in $P_\lambda$ generated by $J(D)$ is contained in $I(C)$.
\end{cor}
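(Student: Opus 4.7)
The plan is a contradiction argument built on a single saturated-chain observation. Suppose $x\in P_{\lambda(C)}$ lies in the order ideal of $P_\lambda$ generated by $J(D)$ but $x\notin I$. Pick $y\in J(D)$ with $x\le y$. Since $J$ is downward-closed and $y\in J$, we have $x\in J$, hence $x\in(J-I)\cap P_{\lambda(C)}$. Also $y\notin I$, for otherwise $x\le y\in I$ together with $I$ being downward-closed would force $x\in I$; thus $y\in(J-I)\cap P_{\lambda(D)}$.

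The central observation, which I will use twice, is that any two comparable members $a\le b$ of $J-I$ lie in the same connected component of the induced subposet $J-I$. To see this, take a saturated chain $a=z_0\lessdot z_1\lessdot\cdots\lessdot z_n=b$ in $P_\lambda$. Each $z_i$ lies in $J$ because $J$ is downward-closed and $b\in J$, and does not lie in $I$ because $a\le z_i$ and $I$ is downward-closed while $a\notin I$; so $z_i\in J-I$. Moreover each $P_\lambda$-cover $z_i\lessdot z_{i+1}$ remains a cover in the induced subposet $J-I$, since having no intermediate in $P_\lambda$ certainly means having none in $J-I$. The chain is therefore a path in the Hasse diagram of $J-I$ from $a$ to $b$.

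Applying this observation to $x\le y$ shows that $x$ and $y$ lie in the same connected component of $J-I$. To identify those components, note that within any single column $\{(v,k):0\le v<k\}$ of $P_\lambda$---which is a chain in $P_\lambda$---the intersection with $J-I$ consists of pairwise comparable elements, so a second application of the observation puts all of them in one component. Hence each nonempty column of $J-I$ lies in a single component, which is the pairwise disjointness of the sets $S_C$, and the hypothesis $x\in P_{\lambda(C)}$ forces $x\in C$; similarly $y\in D$. Thus $x$ and $y$ sharing a component gives $C=D$, contradicting the hypothesis. The only substantive ingredient is the saturated-chain observation of the middle paragraph; the main thing to watch is the downward-closure of both $I$ and $J$, each used for a different purpose in that argument.
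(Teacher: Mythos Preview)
Your proof is correct. The paper argues differently: it invokes Lemma~\ref{lemma:ideals-components} (with $L(D)=D$ and all other $L(C')=\emptyset$) to conclude that $J(D)\cup I$ is an order ideal in $P_\lambda$, hence contains the order ideal generated by $J(D)$; then, since $S_C\cap S_D=\emptyset$, one has $(J(D)\cup I)\cap P_{\lambda(C)}=I(C)$, and the claim follows immediately. Your argument bypasses Lemma~\ref{lemma:ideals-components} and instead unpacks the convexity of $J-I$ directly via saturated chains, then identifies components column by column. Both routes rest on the same underlying fact---that any interval with endpoints in $J-I$ lies entirely in $J-I$---but the paper packages it in the preceding lemma while you spell it out by hand. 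Your approach is more self-contained (and incidentally supplies a proof of the disjointness of the $S_C$'s, which the paper simply asserts); the paper's is shorter once Lemma~\ref{lemma:ideals-components} is available.
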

\begin{proof}
  By Lemma \ref{lemma:ideals-components}, $J(D)\cup I$ is an order ideal in $P_\lambda$.
  Therefore, it contains the order ideal in $P_\lambda$ generated by $J(D)$.
  If $C$ and $D$ are distinct connected components of $J-I$, then $(J(D)\cup I)\cap P_\lambda(C)=I(C)$.
  Therefore the intersection with $P_{\lambda(C)}$ of the order ideal in $P_\lambda$ generated by $J(D)$ is contained in $I(C)$.
\end{proof}
\subsection{Decomposition of endomorphisms}
\label{sec:deco-endo}
Suppose that $A$ has the form (\ref{eq:11}).
Then define $A_C$ to be the subgroup
\begin{equation*}
  A_C=\{(a_1,\ldots,a_l)|a_k=0 \text{ if } k\notin S_C\}.
\end{equation*}
Thus $A_C$ is a finite abelian $p$-group of type $\lambda(C)$.
We have a decomposition
\begin{equation}
  \label{eq:18}
  A=\prod_{C\in \tilde \pi_0(J-I)} A_C.
\end{equation}

Denote the characteristic subgroups of $A_C$ corresponding to $I(C)$ and $J(C)$ (which are order ideals in $P_{\lambda(C)}$) by $A_{I,C}$ and $A_{J,C}$ respectively.
The decomposition (\ref{eq:18}) induces a decomposition
\begin{equation}
  \label{eq:19}
  A_J/A_I=\prod_{C\in \pi_0(J-I)} A_{J,C}/A_{I,C}.
\end{equation}
There is no contribution from $A_0$ since $A_{I,0}=A_{J,0}$.

With respect to the decomposition (\ref{eq:18}), every endomorphism of $A$ can be written as a square matrix $\{\phi_{CD}\}$, where $\phi_{CD}:A_D\to A_C$ is a homomorphism.
\begin{lemma}
  \label{lemma:sum-endo}
  Let $I\subset J$ be order ideals in $P_\lambda$.
  Then every endomorphism $\phi$ of $A$ induces an endomorphism
  \begin{equation*}
    \bar \phi: A_J/A_I\to A_J/A_I
  \end{equation*}
  such that $\bar\phi(A_{J,C}/A_{I,C})\subset A_{J,C}/A_{I,C}$ for each $C\in \pi_0(J-I)$, and
  \begin{equation*}
    \bar \phi= \bigoplus_{C\in \pi_0(J-I)} \overline{\phi_{CC}},
  \end{equation*}
  where $\overline{\phi_{CC}}$ is the endomorphism of $A_{J,C}/A_{I,C}$ induced by $\phi_{CC}$.
\end{lemma}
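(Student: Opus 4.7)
The plan is to first verify that $\bar\phi$ is well defined, then fix $a\in A_{J,D}$ and show that every off-diagonal block $\phi_{CD}(a)$ with $C\neq D$ lies in $A_I$, so that only $\phi_{DD}(a)$ survives modulo $A_I$.

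For well-definedness, the key observation is that $A_I$ and $A_J$ are preserved by \emph{every} endomorphism of $A$, not merely by automorphisms: if $a\in A_I$ then Theorem~\ref{theorem:degeneration} gives $I(\phi(a))\subset I(a)\subset I$, so $\phi(a)\in A_I$, and similarly $\phi(A_J)\subset A_J$. Hence $\phi$ descends to an endomorphism $\bar\phi$ of $A_J/A_I$.

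For the off-diagonal blocks, fix $a\in A_{J,D}$ and $C\in\tilde\pi_0(J-I)$ with $C\neq D$. Extend $\phi_{CD}\colon A_D\to A_C$ to an endomorphism $\tilde\phi$ of $A$ in the obvious way (project onto $A_D$, apply $\phi_{CD}$, include $A_C\hookrightarrow A$); Theorem~\ref{theorem:degeneration} applied to $\tilde\phi$ yields $I(\phi_{CD}(a))\subset I(a)$, and since $a\in A_{J,D}$ the right side lies in the order ideal of $P_\lambda$ generated by $J(D)$. Because $\phi_{CD}(a)\in A_C$, its ideal lies in $P_{\lambda(C)}$, and for $C\in\pi_0(J-I)$ Corollary~\ref{cor:ideal-components} forces $I(\phi_{CD}(a))\subset I(C)$, giving $\phi_{CD}(a)\in A_{I,C}\subset A_I$. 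For $C=0$ the ideals $I$ and $J$ agree on $P_{\lambda(0)}$, so $A_{J,0}=A_{I,0}$, and $\phi(a)\in A_J$ forces $\phi_{0D}(a)\in A_J\cap A_0=A_{I,0}\subset A_I$. Combining, $\phi(a)\equiv\phi_{DD}(a)\pmod{A_I}$ for $a\in A_{J,D}$, which is precisely the block-diagonal formula $\bar\phi=\bigoplus_C\overline{\phi_{CC}}$ relative to the decomposition~(\ref{eq:19}).

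The one non-routine step is the invocation of Corollary~\ref{cor:ideal-components}: Theorem~\ref{theorem:degeneration} alone only bounds $I(\phi_{CD}(a))$ inside the order ideal of $P_\lambda$ generated by $J(D)$, which a priori is larger than $I$ on $P_{\lambda(C)}$. It is the component-wise separation captured by Corollary~\ref{cor:ideal-components} that sharpens this bound to $I(C)\subset I$, making the off-diagonal contributions vanish in the quotient.
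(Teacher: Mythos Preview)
Your proof is correct and follows essentially the same approach as the paper's: both show that $\phi_{CD}(A_{J,D})\subset A_{I,C}$ for $C\neq D$ by combining Theorem~\ref{theorem:degeneration} with Corollary~\ref{cor:ideal-components}. You simply fill in more detail than the paper's two-line argument---explicitly addressing well-definedness of $\bar\phi$, the extension of $\phi_{CD}$ to an endomorphism of $A$ so that Theorem~\ref{theorem:degeneration} applies, and the boundary case $C=0$. One small imprecision: strictly speaking Corollary~\ref{cor:ideal-components} only gives $I(\phi_{CD}(a))\cap P_{\lambda(C)}\subset I(C)$, not $I(\phi_{CD}(a))\subset I(C)$; but since $\phi_{CD}(a)\in A_C$ the generators of $I(\phi_{CD}(a))$ all lie in $P_{\lambda(C)}$, so this suffices for $\phi_{CD}(a)\in A_{I,C}$.
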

\begin{proof}
  By Theorem~\ref{theorem:degeneration} and Lemma~\ref{cor:ideal-components}, if $C\neq D$ then $\phi_{CD}(A_{J,D})\subset A_{I,C}$.
  Therefore, $\bar \phi$ remains unchanged if $\phi_{CD}$ is replaced by $0$ for all $C\neq D$.
  This amounts to replacing $\phi$ by $\oplus_C \phi_{CC}$ and the lemma follows.
\end{proof}
\subsection{Tensor product decomposition of invariant subspaces}
Let $I$ be a small order ideal.
We shall use the notation of Section~\ref{sec:deco-endo} with $J=I^\perp$.
For each $C\in \pi_0(I^\perp-I)$ let $K_C=A_C\times \hat A_C$, and let $\Sp(K_C)$ be the corresponding symplectic group.
Just as (by Theorem~\ref{theorem:characteristic-invariant}) $L^2(A_{I^\perp}/A_I)$ is an invariant subspace for the Weil representation of $\Sp(K)$ on $L^2(A)$, $L^2(A_{I^\perp,C}/A_{I,C})$ is an invariant subspace for the Weil representation of $\Sp(K_C)$ on $L^2(A_C)$.

Now, if $g\in \Sp(K)$, we may write $g=
\left(\begin{smallmatrix}
    g_{11}& g_{12}\\g_{21}&g_{22}
\end{smallmatrix}\right)
$ with respect to the decomposition $K=A\times \hat A$.
For convenience, we identify $\hat A$ with $A$ using $e_i\mapsto \epsilon_i$ for $i=1,\ldots,l$, where $e_i$ and $\epsilon_i$ are as in Section~\ref{sec:symplectic-orbits}.
Hence, we may think of each $g_{ij}$ as an endomorphism of $A$.
By Lemma~\ref{lemma:sum-endo}, the resulting endomorphism $\bar g_{ij}$ of $A_{I^\perp}/A_I$ preserves $A_{I^\perp,C}/A_{I,C}$ for each $C$.
It follows that the image of $\Sp(K)$ in $\Sp(K(I))$ (see Section~\ref{sec:invar-spac-corr}) is the product of the images of the $\Sp(K_C)$'s in the $\Sp(K_C(I\cap C))$'s as $C$ ranges over $\pi_0(I^\perp-I)$.

Thus, by Theorems~\ref{theorem:product-Weil} and~\ref{theorem:invar-I},
\begin{cor}
  \label{cor:component-decomposition}
  The Weil representation of $\Sp(K)$ on $L^2(A_{I^\perp}/A_I)$ is projectively equivalent to the tensor product of the Weil representations of $\Sp(K_C)$ on $L^2(A_{I^\perp,C}/A_{I,C})$ as $C$ ranges over $\pi_0(I^\perp-I)$.
\end{cor}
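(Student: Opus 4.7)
\emph{Proof plan.} The strategy is to chain together three results from earlier in the paper: the quotient description of Theorem~\ref{theorem:invar-I}, the block-diagonal structure captured by Lemma~\ref{lemma:sum-endo}, and the tensor product formula of Theorem~\ref{theorem:product-Weil}.

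First I would note that the primary decomposition (\ref{eq:19}) factors the quotient group as
\begin{equation*}
A_{I^\perp}/A_I = \prod_{C\in \pi_0(I^\perp-I)} A_{I^\perp,C}/A_{I,C}.
\end{equation*}
Dualising yields a compatible product decomposition $K(I) = \prod_C K_C(I\cap C)$, and iterating (\ref{eq:9}) identifies $L^2(A_{I^\perp}/A_I)$ with the tensor product $\bigotimes_C L^2(A_{I^\perp,C}/A_{I,C})$.

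Next, as already recorded in the text immediately preceding the corollary, Lemma~\ref{lemma:sum-endo} applied to the matrix entries $g_{ij}$ of an element $g\in \Sp(K)$ (interpreted via the identification $\hat A \cong A$ of Section~\ref{sec:symplectic-orbits}) shows that the image of $\Sp(K)$ in $\Sp(K(I))$ lies inside the block-diagonal subgroup $\prod_C \Sp(K_C(I\cap C))$. Theorem~\ref{theorem:invar-I} then identifies the Weil representation of $\Sp(K)$ on $L^2(A_{I^\perp}/A_I)$ projectively with the pullback of the restriction of the Weil representation of $\Sp(K(I))$ to this product subgroup.

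Finally, an iterated application of Theorem~\ref{theorem:product-Weil} to the factorisation $K(I) = \prod_C K_C(I\cap C)$ expresses that restriction projectively as $\bigotimes_C$ (Weil representation of $\Sp(K_C(I\cap C))$ on $L^2(A_{I^\perp,C}/A_{I,C})$). A second pass through Theorem~\ref{theorem:invar-I}, now inside each factor and using that $I\cap C$ is a small order ideal in $P_{\lambda(C)}$ with $\perp$-dual $I^\perp\cap C$, pulls each factor back to the Weil representation of $\Sp(K_C)$, yielding the claim. The main obstacle will be the bookkeeping around projective equivalence: one must check that the commutator cocycle $c$ genuinely factors across the product $K(I) = \prod_C K_C(I\cap C)$ so that Theorem~\ref{theorem:product-Weil} applies, and that the ambient identification $\hat A\cong A$ respects the component decomposition so that the $\perp$-involution on order ideals is compatible with restriction to each $P_{\lambda(C)}$. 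Both points should be routine, resting on the block-diagonal observation already recorded in Lemma~\ref{lemma:sum-endo}.
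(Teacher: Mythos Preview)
Your proposal is correct and follows the same route as the paper: the paper's entire argument is the single clause ``Thus, by Theorems~\ref{theorem:product-Weil} and~\ref{theorem:invar-I},'' resting on the paragraph just above it, which is exactly the block-diagonal observation via Lemma~\ref{lemma:sum-endo} that you invoke. Your write-up is simply a more explicit unpacking of that sentence, including the componentwise second application of Theorem~\ref{theorem:invar-I} that the paper leaves implicit; the two ``obstacles'' you flag are indeed routine and are precisely what the paper absorbs into its terse citation of those two theorems.
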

\section{Poset of invariant subspaces}
\label{sec:poset-invar-sub}
\subsection{The invariant subspaces}
Let
\begin{equation*}
  Q_\lambda=\big\{(I,\phi)|I\subset P_\lambda\text{ a small order ideal},\;\phi:\pi_0(I^\perp-I)\to \Z/2\Z \text{ any function}\big\}.
\end{equation*}
For each $(I,\phi)\in Q_\lambda$ use the decomposition of Corollary~\ref{cor:component-decomposition} to define $L^2(A)_{I,\phi}$ as the subspace of $L^2(A_{I^\perp}/A_I)$ given by
\begin{equation*}
  L^2(A)_{I,\phi}=\bigotimes_{C\in \pi_0(I^\perp-I)} L^2(A_{I^\perp,C}/A_{I,C})_{\phi(C)}
\end{equation*}
where $L^2(A_{I^\perp,C}/A_{I,C})_{\phi(C)}$ denotes the space of even or odd functions on $A_{I^\perp,C}/A_{I,C}$ when $\phi(C)$ is $0$ or $1$ respectively.
In other words, $L^2(A)_{I,\phi}$ consists of functions on $A_{I^\perp}/A_I$ which, under the decomposition
\begin{equation}
  \label{eq:20}
  A_{I^\perp}/A_I=\prod_{C\in \pi_0(I^\perp-I)} A_{I^\perp,C}/A_{I,C}
\end{equation}
are even in the components where $\phi(C)=0$ and odd in the components where $\phi(C)=1$.
By Theorems~\ref{theorem:even-odd} and~\ref{theorem:characteristic-invariant}, and by Corollary~\ref{cor:component-decomposition}, $L^2(A)_{I,\phi}$ is an $\Sp(K)$-invariant subspace of $L^2(A)$ for each $(I,\phi)\in Q_\lambda$.

\subsection{The partial order}
\label{sec:partial-order}
Clearly,
\begin{lemma}
  \label{lemma:poset}
  For $(I,\phi)$ and $(I',\phi')$ in $Q_\lambda$, $L^2(A)_{I',\phi'}\subset L^2(A)_{I,\phi}$ if and only if the following conditions are satisfied:
  \begin{enumerate}
  \item \label{item:3} $I\subset I'$.
  \item \label{item:4} For each $P\in \pi_0(I^\perp-I)$,
    \begin{equation*}
      \phi(P)=\sum_{P'\in \pi_0(I^{\prime\perp}-I'),P'\subset P}\phi'(P').
    \end{equation*}
  \end{enumerate}
\end{lemma}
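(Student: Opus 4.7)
The plan is to unwind the tensor-product definition of $L^2(A)_{I,\phi}$ and compare it factor-by-factor against $L^2(A)_{I',\phi'}$, extracting conditions~\ref{item:3} and~\ref{item:4} from the support/invariance data and the parity data respectively. For condition~\ref{item:3}, I would start from any nonzero $f' \in L^2(A)_{I',\phi'}$: it is supported on $A_{I'^\perp}$ and invariant under $A_{I'}$, so membership in $L^2(A)_{I,\phi}$ forces $A_{I'^\perp}\subset A_{I^\perp}$ and $A_I\subset A_{I'}$, both of which amount to $I\subset I'$ via the lattice isomorphism and the order-reversing involution $\perp$ recalled in Section~\ref{section:char-sub}. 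The converse containments are immediate from the same dictionary.

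Next, assuming $I\subset I'$, I would check that the inclusion $A_{I'^\perp}/A_{I'}\hookrightarrow A_{I^\perp}/A_I$ is compatible with the product decompositions: since $I\subset I' \subset I'^\perp \subset I^\perp$, we have $I'^\perp-I'\subset I^\perp-I$, so every component $P'$ of $I'^\perp-I'$ lies in a unique $P\in\pi_0(I^\perp-I)$. Grouping by $P$ and applying Corollary~\ref{cor:component-decomposition} inside each factor $A_P$ yields
\begin{equation*}
A_{I'^\perp}/A_{I'} \;=\; \prod_{P\in\pi_0(I^\perp-I)}\;\prod_{\substack{P'\in\pi_0(I'^\perp-I')\\P'\subset P}} A_{I'^\perp,P'}/A_{I',P'},
\end{equation*}
with the inner product embedding naturally inside $A_{I^\perp,P}/A_{I,P}$.

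With this in hand, the parity argument closes both directions. Parity distributes additively over tensor products: a pure tensor $\bigotimes_{P'} f'_{P'}$ with $f'_{P'}$ of parity $\phi'(P')$, viewed through the coarser grouping, has $P$-factor of parity $\sum_{P'\subset P}\phi'(P')$ in $\Z/2\Z$. For the backward direction, condition~\ref{item:4} identifies this with $\phi(P)$, so $f'\in L^2(A)_{I,\phi}$, and linearity extends to all of $L^2(A)_{I',\phi'}$. For the forward direction, since each $f'_{P'}$ can independently be chosen nonzero with its prescribed parity, the containment forces $\sum_{P'\subset P}\phi'(P')=\phi(P)$ for every $P$.

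The main obstacle is the compatibility of the two product decompositions in the second paragraph—that the characteristic subquotient $A_{I'^\perp,P}/A_{I',P}$ viewed inside $A_P$ really is a product over the connected components of $I'^\perp-I'$ contained in $P$. This rests on the column-wise nature of the involution $\perp$, so that restriction to $P_{\lambda(P)}$ commutes with $\perp$ and makes $I'\cap P_{\lambda(P)}$ a small order ideal in $P_{\lambda(P)}$, together with the disjointness of the segments $S_C$, which ensures that connected components of $I^\perp-I$ respect the column grouping. These facts are essentially formal but require careful bookkeeping.
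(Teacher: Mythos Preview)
The paper gives no proof of this lemma at all: it is introduced with the single word ``Clearly,'' and the statement is left to the reader. So there is no approach to compare against, and your proposal is a reasonable and essentially correct unpacking of what the authors regard as evident.

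One small point of looseness worth tightening: in your forward argument for condition~\ref{item:3}, you write that for \emph{any} nonzero $f'\in L^2(A)_{I',\phi'}$, membership in $L^2(A)_{I,\phi}$ forces $A_{I'^\perp}\subset A_{I^\perp}$. A single $f'$ only tells you that its support lies in $A_{I^\perp}$, and if some $\phi'(P')=1$ then every $f'$ vanishes on a proper subvariety of $A_{I'^\perp}$. You need to choose $f'$ judiciously---for instance one that does not vanish at $a(I'^\perp)$ (which exists, essentially by the construction in Lemma~\ref{lemma:existence}); then $a(I'^\perp)\in A_{I^\perp}$ forces $I'^\perp\subset I^\perp$, i.e.\ $I\subset I'$. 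The invariance half follows dually, or simply from the equivalence $A_I\subset A_{I'}\iff I\subset I'$. With that adjustment your argument goes through; the parity bookkeeping in your third paragraph is correct as stated.
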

Thus the conditions \ref{item:3} and \ref{item:4} define a partial order on $Q_\lambda$ (which is obviously independent of $p$).
Recall that the multiplicity $m(x)$ of an element $x=(v,k)\in P_\lambda$ is the number of times $k$ occurs in the partition $\lambda$.
For any subset $S\subset P_\lambda$, let $[S]$ denote the number of elements of $S$, counted with multiplicity:
\begin{equation*}
  [S]=\sum_{x\in S} m(x).
\end{equation*}
\begin{lemma}
  \label{lemma:dimension}
  For each $(I,\phi)\in Q_\lambda$,
  \begin{equation*}
    \dim L^2(A)_{I,\phi}=\prod_{C\in \pi_0(I^\perp-I)}\frac{p^{[C]}+(-1)^{\phi(C)}}2.
  \end{equation*}
\end{lemma}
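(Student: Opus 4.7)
The approach combines three ingredients: the tensor product decomposition built into the definition of $L^2(A)_{I,\phi}$, the standard dimension formula for even and odd functions on a finite abelian group of odd order, and the order formula for characteristic subgroups from Section~\ref{section:char-sub}.

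By the definition of $L^2(A)_{I,\phi}$ as a tensor product, its dimension factors as
\begin{equation*}
  \dim L^2(A)_{I,\phi} = \prod_{C \in \pi_0(I^\perp - I)} \dim L^2(A_{I^\perp,C}/A_{I,C})_{\phi(C)}.
\end{equation*}
For any finite abelian group $B$ of odd order, the even and odd subspaces of $L^2(B)$ are the $\pm 1$-eigenspaces of the involution $f \mapsto f \circ (-\mathrm{id})$; its trace equals the number of elements fixed by negation, which is $1$ since $|B|$ is odd. Combined with $\dim L^2(B) = |B|$, this yields even- and odd-subspace dimensions $(|B| \pm 1)/2$, so that the $\phi(C)$-parity subspace has dimension $(|B| + (-1)^{\phi(C)})/2$. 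Applying this with $B = A_{I^\perp,C}/A_{I,C}$ reduces the lemma to the order identity $|A_{I^\perp,C}/A_{I,C}| = p^{[C]}$.

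For that identity, I would invoke $|A_J| = p^{[J]}$ from Section~\ref{section:char-sub}, applied within the $p$-group $A_C$ to the order ideals $I(C), I^\perp(C) \subset P_{\lambda(C)}$, which gives $|A_{I^\perp,C}/A_{I,C}| = p^{[I^\perp(C)] - [I(C)]}$. It then suffices to establish the set identity $(I^\perp - I) \cap P_{\lambda(C)} = C$. One inclusion is immediate from the fact that $C$ is a connected component of $I^\perp - I$ supported on $P_{\lambda(C)}$. The reverse inclusion is the main (mild) obstacle: the key point is that within a single column of $P_\lambda$ (fixed second coordinate), both $I$ and $I^\perp$ restrict to down-closed tails, so their set-theoretic difference restricted to that column is a contiguous sub-chain, hence connected in the induced Hasse diagram. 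Thus any two elements of $I^\perp - I$ sharing a second coordinate lie in the same connected component, forcing every element of $I^\perp - I$ whose column index lies in $S_C$ to belong to $C$. Once this identity is in place, additivity of $[\cdot]$ on disjoint unions gives $[I^\perp(C)] - [I(C)] = [C]$, and substituting $|B| = p^{[C]}$ into the product above completes the proof.
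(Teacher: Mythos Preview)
Your argument is correct, and since the paper states this lemma without proof (treating it as evident from the definitions), your write-up is a faithful elaboration of exactly the intended reasoning: tensor decomposition, the $(|B|\pm 1)/2$ count for even/odd functions, and the cardinality formula $|A_J|=p^{[J]}$.

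One small simplification: in your final step you do not actually need the contiguity of $(I^\perp-I)$ restricted to a column. Any two elements $(v,k)$ and $(v',k)$ in the same column of $P_\lambda$ are comparable, so if both lie in the subposet $I^\perp-I$ they automatically lie in the same connected component. This immediately gives $(I^\perp-I)\cap P_{\lambda(C)}\subset C$, since any element there shares a column with some element of $C$.
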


\section{Irreducible subspaces}
\label{sec:parametrization}
\subsection{A bijection between $J(P_\lambda)$ and $Q_\lambda$}
\label{sec:biject-betw-p_lambda}
Let $J(P_\lambda)$ denote the lattice of order ideals in $P_\lambda$.
\begin{lemma}
  \label{lemma:cardinalities}
  For each partition $\lambda$, $|J(P_\lambda)|=|Q_\lambda|$. 
\end{lemma}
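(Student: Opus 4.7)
The plan is to group each $L\in J(P_\lambda)$ according to the small order ideal $I_L:=L\cap L^\perp$ and show that the fiber over a given small $I$ has cardinality $2^{|\pi_0(I^\perp - I)|}$; this yields $|J(P_\lambda)|=\sum_I 2^{|\pi_0(I^\perp - I)|}=|Q_\lambda|$. First I would record the consequences of $L$ being an order ideal: since $\perp$ is an order-reversing involution, $I_L^\perp=L^\perp\cup L^{\perp\perp}=L\cup L^\perp$, so $I_L\subset I_L^\perp$ (i.e., $I_L$ is small) and $L\subset I_L^\perp$. Consequently $L=I_L\sqcup S_L$ with $S_L:=L-I_L\subset I_L^\perp - I_L$, and $S_L$ is an order ideal of the induced subposet $I_L^\perp - I_L$ because $L$ and $I_L$ are both order ideals of $P_\lambda$. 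Writing $r$ for the order-reversing involution of $P_\lambda$ for which $J^\perp=P_\lambda - r(J)$ for every order ideal $J$, the identity $S_L=L-L^\perp=L\cap r(L)$ shows that $S_L$ is $r$-invariant. Conversely, for any small $I$ and any $r$-invariant order ideal $S$ of $I^\perp - I$, Lemma~\ref{lemma:ideals-components} ensures $L:=I\sqcup S$ is an order ideal of $P_\lambda$, and a short computation using $r(S)=S$ gives $L^\perp = I\sqcup ((I^\perp - I) - S)$, whence $L\cap L^\perp=I$. Thus the fiber over $I$ is in natural bijection with the $r$-invariant order ideals of $I^\perp - I$.

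The main obstacle is to identify these $r$-invariant order ideals with subsets of $\pi_0(I^\perp - I)$, and the key step is to show that every connected component $C$ of $I^\perp - I$ is itself $r$-invariant. Granting this, any union of components is $r$-invariant, and conversely any $r$-invariant order ideal of $I^\perp - I$ is simultaneously upward-closed (since $r$ reverses order), hence a union of connected components. So the $r$-invariant order ideals are exactly the $2^{|\pi_0(I^\perp - I)|}$ unions of components, completing the count.

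To establish the key step, I would use the column structure of $P_\lambda$. For each distinct part $k$ of $\lambda$, the ``column'' $\{(v,k):0\leq v<k\}$ is a chain and is preserved by $r$, which acts on it via $(v,k)\mapsto(k-1-v,k)$. The intersections of $I$ and $I^\perp$ with this column are nested order ideals of the chain, so the difference $(I^\perp - I)\cap\{(v,k):0\leq v<k\}$ is a single interval in the chain, and hence connected. The smallness $I\subset I^\perp$ forces $I\cap r(I)=\emptyset$, so $r$ preserves $I^\perp - I$ setwise; therefore for any $x\in I^\perp - I$ both $x$ and $r(x)$ lie in the same column-interval of $I^\perp - I$, hence in the same connected component. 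This shows $r(C)\subset C$ for every component $C$, and by cardinality $r(C)=C$.
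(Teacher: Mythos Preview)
Your proof is correct and follows essentially the same route as the paper: the paper writes down the explicit bijection $\Theta:Q_\lambda\to J(P_\lambda)$ sending $(I,\phi)$ to $I\cup\bigcup_{\phi(C)=1}C$, with inverse $J\mapsto(J\cap J^\perp,\phi_J)$, which is precisely your fiber description. You actually supply the verification (via $r$-invariance and the column argument) that $J-(J\cap J^\perp)$ is a union of connected components of $(J\cap J^\perp)^\perp-(J\cap J^\perp)$, a point the paper leaves as ``easy to verify''.
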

\begin{proof}
  We construct an explicit bijection $Q_\lambda\to J(P_\lambda)$.
  To $(I,\phi)\in Q_\lambda$, associate the ideal (see Lemma~\ref{lemma:ideals-components})
  \begin{equation*}
    \Theta(I,\phi)=I\bigcup\Bigg(\coprod_{C\in \pi_0(I^\perp-I)} I_{\phi(C)}\Bigg)
  \end{equation*}
  where
  \begin{equation*}
    I_{\phi(C)}=
    \begin{cases}
      I\cap C & \text{ if } \phi(C)=0\\
      I^\perp\cap C & \text{ if } \phi(C)=1.
    \end{cases}
  \end{equation*}
  In the other direction, given an ideal $J\subset P_\lambda$, $I=J\cap J^\perp$ is a small order ideal.
  We have $I^\perp=J\cup J^\perp$.
  For each $C\in \pi_0(I^\perp-I)$ define
  \begin{equation*}
    \phi_J(C)=
    \begin{cases}
      0 & \text{ if } I\cap C=J\cap C,\\
      1 & \text{ if } I\cap C=J^\perp\cap C.
    \end{cases}
  \end{equation*}
  Define $\Psi:Q_\lambda\to J(P_\lambda)$ by $\Psi(J)=(J\cap J^\perp,\phi_J)$ where $\phi_J$.
  It is easy to verify that $\Phi$ and $\Psi$ are mutual inverses.
\end{proof}
\subsection{Existence lemma}
\begin{lemma}
  \label{lemma:existence}
  For every $(I,\phi)\in Q_\lambda$, there exists $f\in L^2(A)_{I,\phi}$ such that $f\notin L^2(A)_{I',\phi'}$ for any $(I',\phi')<(I,\phi)$.
\end{lemma}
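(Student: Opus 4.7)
The plan is to deduce the lemma from Lemma~\ref{lemma:poset} via a short general-position argument, rather than by constructing $f$ explicitly. The key point is that Lemma~\ref{lemma:poset} is a genuine ``if and only if'', so the partial order on $Q_\lambda$ reads off exactly which inclusions hold among the subspaces $L^2(A)_{I,\phi}$.

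First I would show that whenever $(I',\phi')<(I,\phi)$ in $Q_\lambda$, the subspace $L^2(A)_{I',\phi'}$ is \emph{proper} in $L^2(A)_{I,\phi}$. The $(I',\phi')\leq(I,\phi)$ direction of Lemma~\ref{lemma:poset} already gives the inclusion. If it were an equality, then also $L^2(A)_{I,\phi}\subset L^2(A)_{I',\phi'}$, and applying Lemma~\ref{lemma:poset} with the roles swapped would force $(I,\phi)\leq (I',\phi')$; antisymmetry of the partial order then yields $(I,\phi)=(I',\phi')$, contradicting strictness.

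Second, since $Q_\lambda$ is finite, only finitely many subspaces $L^2(A)_{I',\phi'}$ with $(I',\phi')<(I,\phi)$ need be avoided. No complex vector space is a union of finitely many of its proper subspaces, so the set
\begin{equation*}
L^2(A)_{I,\phi}\setminus\bigcup_{(I',\phi')<(I,\phi)}L^2(A)_{I',\phi'}
\end{equation*}
is non-empty, and any element of it serves as the desired $f$.

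I anticipate no real obstacle: the substance of the lemma is contained in the bookkeeping of Lemma~\ref{lemma:poset}, and the conclusion is then purely formal. An explicit construction of $f$ as a tensor product across the components $C\in\pi_0(I^\perp-I)$—taking in each factor a function of the prescribed parity whose support is not confined to any proper characteristic subgroup of $A_{I^\perp,C}/A_{I,C}$—is possible but unnecessary for the existence statement required here.
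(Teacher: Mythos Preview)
Your argument is correct and takes a genuinely different route from the paper's. The paper exhibits an explicit witness: the unique function in $L^2(A)_{I,\phi}$ supported on the sign-orbit of $a(I^\perp)+A_I$ under the decomposition~(\ref{eq:20}) and taking the value $1$ there. You instead extract properness of each inclusion $L^2(A)_{I',\phi'}\subset L^2(A)_{I,\phi}$ from the ``only if'' direction of Lemma~\ref{lemma:poset} and then finish with the standard fact that a complex vector space is never a finite union of proper subspaces. Your proof is shorter and more conceptual; the paper's has the advantage of producing a concrete $f$, and in particular makes no appeal to the ``only if'' half of Lemma~\ref{lemma:poset}, which the paper states without proof. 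There is no circularity---strict inequality $(I',\phi')<(I,\phi)$ forces $I\subsetneq I'$, and then properness follows already from the dimension formula of Lemma~\ref{lemma:dimension}---but it is worth being aware that your argument leans on that direction while the paper's does not.
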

\begin{proof}
  Take as $f$ the unique element in $L^2(A)_{I,\phi}$ whose value at $a(I^\perp)+A_I$ (using the notation of Section~\ref{sec:orbits}) is $1$, and which vanishes on all elements of $A_{I^\perp}/A_I$ not obtained from $a(I^\perp)+A_I$ by changing the signs of some of its components under the decomposition (\ref{eq:20}).
\end{proof}
\subsection{The irreducible invariant subspaces}
\label{sec:irred-invar-subsp}
The two lemmas above are enough to give us the main theorem:
\begin{theorem}
  \label{theorem:main}
  For each $(I,\phi)\in Q_\lambda$, there is a unique irreducible subspace for the Weil representation of $\Sp(K)$ on $L^2(A)$ which is contained in $L^2(A)_{I,\phi}$ but not $L^2(A)_{I',\phi'}$ for any $(I',\phi')<(I,\phi)$.
  As $p$ varies, the dimension of this representation is a polynomial in $p$ of degree $[I^\perp-I]$ with leading coefficient $2^{-|\pi_0(I^\perp-I)|}$ and all coefficients in $\Z_{(2)}$.
\end{theorem}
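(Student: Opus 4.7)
The plan is to match the number $|Q_\lambda|$ of candidate parameters against the total count of irreducibles in $L^2(A)$, and then extract the dimension by Möbius inversion on $Q_\lambda$. By Theorems~\ref{theorem:symplectic-orbits} and~\ref{theorem:multiplicity-free}, Corollary~\ref{cor:orbit-mult}, and Lemma~\ref{lemma:cardinalities}, the Weil representation contains exactly $|J(P_\lambda)|=|Q_\lambda|$ distinct irreducibles. For each $(I,\phi)\in Q_\lambda$, Lemma~\ref{lemma:existence} supplies a vector in $L^2(A)_{I,\phi}$ lying in no strictly smaller $L^2(A)_{I',\phi'}$; since multiplicity-freeness makes $L^2(A)_{I,\phi}$ a direct sum of distinct irreducibles, this vector has a component in some irreducible $\Hilb_{I,\phi}\subset L^2(A)_{I,\phi}$ not contained in any $L^2(A)_{I',\phi'}$ with $(I',\phi')<(I,\phi)$. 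Write $\sigma(I,\phi)$ for the set of such irreducibles, so $|\sigma(I,\phi)|\geq 1$.

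The key step is to show $\sigma(I_1,\phi_1)\cap\sigma(I_2,\phi_2)=\emptyset$ whenever $(I_1,\phi_1)\neq(I_2,\phi_2)$. The comparable case is immediate from the definitions. For the incomparable case I would use the description of $L^2(A)_{I,\phi}$ from Theorem~\ref{theorem:characteristic-invariant} and Section~\ref{sec:poset-invar-sub}: its elements are functions supported on $A_{I^\perp}$, invariant under translations in $A_I$, with parity $\phi$ on the decomposition~(\ref{eq:20}). The intersection $L^2(A)_{I_1,\phi_1}\cap L^2(A)_{I_2,\phi_2}$ then consists of functions supported on $A_{I_1^\perp\cap I_2^\perp}$ and invariant under $A_{I_1\cup I_2}$, which (when nonzero) forces $I_3:=I_1\cup I_2$ to be small and embeds the intersection into $L^2(A)_{I_3,\phi_3}$ for a $\phi_3$ built from $\phi_1$ and $\phi_2$; since $(I_3,\phi_3)$ is strictly below both $(I_i,\phi_i)$, no irreducible can lie in both $\sigma$'s. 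With $|Q_\lambda|$ pairwise disjoint nonempty sets $\sigma(I,\phi)$ inside a pool of only $|Q_\lambda|$ irreducibles, each $\sigma(I,\phi)$ is a singleton, yielding both existence and uniqueness of $\Hilb_{I,\phi}$.

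For the dimension I would first establish that $\Hilb_{I',\phi'}\subset L^2(A)_{I,\phi}$ iff $(I',\phi')\leq(I,\phi)$: the ``if'' direction is immediate from Lemma~\ref{lemma:poset}; the ``only if'' follows because the up-set $U=\{(J,\psi):\Hilb_{I',\phi'}\subset L^2(A)_{J,\psi}\}$ has, by the disjointness just proved, the single minimal element $(I',\phi')$, so in the finite poset $Q_\lambda$ it equals the principal up-set generated by $(I',\phi')$. Hence $\dim L^2(A)_{I,\phi}=\sum_{(I',\phi')\leq(I,\phi)}\dim\Hilb_{I',\phi'}$, and Möbius inversion on $Q_\lambda$ writes $\dim\Hilb_{I,\phi}$ as an integer combination of the polynomials of Lemma~\ref{lemma:dimension}, whose coefficients lie in $\Z_{(2)}$. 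For the top term I would note that $(I',\phi')<(I,\phi)$ strictly forces $I\subsetneq I'$, since when $I=I'$ condition~(\ref{item:4}) of Lemma~\ref{lemma:poset} collapses to $\phi=\phi'$; hence $[I^{\prime\perp}-I']=[I^{\prime\perp}]-[I']<[I^\perp]-[I]=[I^\perp-I]$. Only the term $(I',\phi')=(I,\phi)$ in the Möbius sum contributes at top degree, giving degree $[I^\perp-I]$ and leading coefficient $2^{-|\pi_0(I^\perp-I)|}$ directly from Lemma~\ref{lemma:dimension}. The main obstacle is the disjointness step, where the support, invariance, and parity descriptions of $L^2(A)_{I,\phi}$ must be combined carefully to construct the correct $\phi_3$ for two incomparable elements.
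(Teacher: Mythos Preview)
Your overall strategy matches the paper's proof exactly: count the irreducibles via Corollary~\ref{cor:orbit-mult}, Theorem~\ref{theorem:symplectic-orbits}, and Lemma~\ref{lemma:cardinalities}; show the sets $\sigma(I,\phi)$ (the paper's $R_{I,\phi}$) are nonempty and pairwise disjoint; conclude by cardinality that each is a singleton; then M\"obius-invert Lemma~\ref{lemma:dimension}. The paper in fact simply \emph{asserts} that the $R_{I,\phi}$ are pairwise disjoint, and likewise writes down the identity $\sum_{(I',\phi')\leq(I,\phi)}\dim V_{I',\phi'}=\dim L^2(A)_{I,\phi}$ without justifying the ``only if'' direction. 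So your proposal is more explicit than the paper on precisely the two points you single out.

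There is, however, a genuine gap in your disjointness sketch. Your incomparable-case argument sets $I_3=I_1\cup I_2$ and claims $(I_3,\phi_3)$ lies strictly below both $(I_i,\phi_i)$ in $Q_\lambda$. This fails when $I_1\subsetneq I_2$ as order ideals but $(I_1,\phi_1)$ and $(I_2,\phi_2)$ are nonetheless incomparable in $Q_\lambda$ because the parity condition~\ref{item:4} fails: then $I_3=I_2$, and no $(I_2,\phi_3)$ can be strictly below $(I_2,\phi_2)$. The fix is easy: in this situation the intersection $L^2(A)_{I_1,\phi_1}\cap L^2(A)_{I_2,\phi_2}$ is already zero. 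Indeed, since $I_1\subset I_2$ one has $L^2(A_{I_2^\perp}/A_{I_2})=\bigoplus_{\phi_3}L^2(A)_{I_2,\phi_3}$, and each summand lies in $L^2(A)_{I_1,\psi_1}$ for the unique $\psi_1$ determined from $\phi_3$ by Lemma~\ref{lemma:poset}; intersecting with $L^2(A)_{I_1,\phi_1}$ kills all summands with $\psi_1\neq\phi_1$, and intersecting with $L^2(A)_{I_2,\phi_2}$ forces $\phi_3=\phi_2$, which would give $(I_2,\phi_2)\leq(I_1,\phi_1)$, contradicting incomparability. With this case handled separately, your argument goes through; only when $I_1$ and $I_2$ are incomparable as order ideals does one actually need $I_3$ strictly larger and the strict-below mechanism you describe. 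Your remark that $(I',\phi')<(I,\phi)$ forces $I\subsetneq I'$ (hence strictly smaller degree) is correct, and the remaining M\"obius-inversion argument is identical to the paper's.
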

\begin{proof}
  By Corollary~\ref{cor:Boolean-lattice}, the $\Sp(K)$-invariant subspaces of $L^2(A)$ form a Boolean lattice $\Lambda$.
  Let $R$ denote the set of minimal non-trivial $\Sp(K)$-invariant subspaces of $L^2(A)$.
  These are the atoms of $\Lambda$.
  By Corollary~\ref{cor:orbit-mult} and Theorem~\ref{theorem:symplectic-orbits} the cardinality of $R$ is the same as that of $J(P_\lambda)$.
  Each invariant subspace is determined by the atoms which are contained in it.
  The map $(I,\phi)\mapsto L^2(A)_{I,\phi}$ is an order-preserving map $Q_\lambda\to \Lambda$.
  Let $R_{I,\phi}$ be the set of atoms which occur in $L^2(A)_{I,\phi}$ but not in $L^2(A)_{I',\phi'}$ for any $(I',\phi')<(I,\phi)$.
  The subsets $R_{I,\phi}$ are $|Q_\lambda|$ pairwise disjoint subsets of $R$, and by Lemma~\ref{lemma:existence}, each of them is non-empty.
  Therefore, by Lemma~\ref{lemma:cardinalities}, each of them must be singleton and these subspaces exhaust $R$.
  It follows that there is a unique irreducible representation of $\Sp(K)$ that occurs in $L^2(A)_{I,\phi}$ but not in $L^2(A)_{I',\phi'}$ for any $(I',\phi')<(I,\phi)$.
  Let $V_{I,\phi}$ denote this irreducible subspace.

  By Lemma~\ref{lemma:dimension},
  \begin{equation*}
    \sum_{(I',\phi')\leq (I,\phi)}\dim V_{I',\phi'} = \prod_{P\in \pi_0(I^\perp-I)} \frac{p^{[C]}+(-1)^{\phi(C)}}2.
  \end{equation*}
  By the M\"obius inversion formula \cite[Section~3.7]{MR1442260},
  \begin{equation}
    \label{eq:10}
    \dim V_{I,\phi}=\sum_{(I',\phi')\leq (I,\phi)} \mu((I,\phi),(I',\phi'))\prod_{C\in \pi_0(I^{\prime\perp}-I')} \frac{p^{[C]}+(-1)^{\phi(C)}}2,
  \end{equation}
  where $\mu$ is a the M\"obius function of $Q_\lambda$.
  Since $\mu((I,\phi),(I,\phi))=1$ and the M\"obius function is integer-valued, the right hand side of (\ref{eq:10}) is indeed a polynomial in $p$ with leading coefficient $2^{-|\pi_0(I^\perp-I)|}$. Clearly, the other coefficients do not have denominators other than powers of $2$.
\end{proof}
\subsection{A combinatorial lemma}
\label{sec:combinatorial-lemma}
\begin{lemma}
  \label{lemma:combinatorial}
  Let $P$ be a poset and $J(P)$ be its lattice of order ideals.
  Let $m:P\to \mathbf N$ be any function (called the multiplicity function).
  For each $S\subset P$ let $[S]=\sum_{x\in S} m(x)$, the elements of $S$ counted with multiplicity, and $\max S$ denote the set of maximal elements of $S$.
  If $\alpha:J(P)\to \C[t]$ is a function such that
  \begin{equation*}
    \sum_{J\subset I}\alpha(J)=t^{[I]} \text{ for every order ideal } I\subset P,
  \end{equation*}
  then,
  \begin{equation}
    \label{eq:27}
    \alpha(I)=t^{[I]}\prod_{x\in \max I}(1-t^{-m(x)}).
  \end{equation}
\end{lemma}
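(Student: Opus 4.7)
\bigskip

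\noindent\textbf{Proof proposal.} The plan is to verify directly that the proposed formula satisfies the defining system of equations, and then invoke uniqueness. Uniqueness is immediate: for each order ideal $I$, the equation $\sum_{J\subseteq I}\alpha(J)=t^{[I]}$ determines $\alpha(I)$ in terms of the values $\alpha(J)$ for $J\subsetneq I$, so a simple induction on $|I|$ shows that at most one function $\alpha$ can satisfy the hypothesis. (Alternatively, one can invoke M\"obius inversion on $J(P)$.) It therefore suffices to show that the right-hand side of (\ref{eq:27}) plugs in correctly.

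First I would expand the product. Since $\max I$ is an antichain and $S\subseteq \max I$ gives an order ideal $I\setminus S$ with $[I\setminus S]=[I]-[S]$,
\begin{equation*}
  t^{[I]}\prod_{x\in\max I}(1-t^{-m(x)}) = \sum_{S\subseteq \max I}(-1)^{|S|}t^{[I\setminus S]}.
\end{equation*}
Calling this expression $\beta(I)$, I then compute
\begin{equation*}
  \sum_{J\subseteq I}\beta(J) = \sum_{J\subseteq I}\sum_{S\subseteq \max J}(-1)^{|S|}t^{[J\setminus S]}
\end{equation*}
and reindex by $L=J\setminus S$. The key combinatorial observation is that for a fixed order ideal $L\subseteq I$, the pairs $(J,S)$ with $J\subseteq I$ an order ideal, $S\subseteq\max J$, and $J\setminus S=L$ are precisely the pairs where $S$ ranges over arbitrary subsets of $\min(I\setminus L)$ and $J=L\cup S$.

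The verification of this bijection is the one step requiring care, and I expect it to be the main technical point. In one direction, if $S\subseteq \min(I\setminus L)$, then everything strictly below any $s\in S$ lies in $L$ (by minimality in $I\setminus L$ together with the fact that $L\subseteq I$ is an order ideal), so $J:=L\cup S$ is an order ideal; moreover no element of $L$ can exceed any $s\in S$ (else $s\in L$ by ideal closure), and $S$ is an antichain, so $S\subseteq\max J$ and $J\setminus S=L$. Conversely, given such a pair $(J,S)$, any $s\in S$ satisfies $\{t<s\}\cap P\subseteq L$ (else some $t<s$ would lie in $S$, contradicting $s\in\max J$), which forces $s\in\min(I\setminus L)$.

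Granting this bijection, the sum becomes
\begin{equation*}
  \sum_{J\subseteq I}\beta(J) = \sum_{L\subseteq I} t^{[L]}\sum_{S\subseteq \min(I\setminus L)}(-1)^{|S|}.
\end{equation*}
The inner alternating sum over subsets of a finite set vanishes unless the set is empty, which happens precisely when $L=I$. Hence $\sum_{J\subseteq I}\beta(J)=t^{[I]}$, and by uniqueness $\alpha=\beta$, completing the proof.
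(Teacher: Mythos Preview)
Your proof is correct, but it takes a different route from the paper's. The paper applies M\"obius inversion for the distributive lattice $J(P)$ directly: citing the known fact that $\mu(J,I)=(-1)^{|I\setminus J|}$ when $I\setminus\max I\subseteq J\subseteq I$ and $\mu(J,I)=0$ otherwise, it obtains
\[
  \alpha(I)=\sum_{I\setminus\max I\subseteq J\subseteq I}(-1)^{|I\setminus J|}t^{[J]}
  =t^{[I]}\sum_{S\subseteq\max I}(-1)^{|\max I\setminus S|}t^{-[\max I\setminus S]},
\]
and then simply recognizes the right-hand side as the expansion of the product. You instead start from the product, expand it into the same alternating sum, and then verify the defining identity by a reindexing bijection between pairs $(J,S)$ with $S\subseteq\max J$, $J\setminus S=L$ and subsets $S\subseteq\min(I\setminus L)$, together with the usual vanishing of $\sum_{S\subseteq X}(-1)^{|S|}$ for $X\neq\emptyset$. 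Your argument is more self-contained---it does not need the M\"obius function formula for distributive lattices and in effect reproves the relevant instance of it---at the cost of being a bit longer; the paper's argument is shorter but leans on that citation.
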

\begin{proof}
  By the M\"obius inversion formula for a finite distributive lattice \cite[Example~3.9.6]{MR1442260},
  \begin{eqnarray}
    \nonumber
    \alpha(I)&=&\sum_{I-\max I\subset J\subset I}(-1)^{|I-J|}t^{[J]}\\
    \label{eq:14}
    &=&t^{[I]}\sum_{S\subset \max I}(-1)^{|\max I -S|}t^{-[\max I-S]}.
    \end{eqnarray}
    Each term in the expansion of the product
    \begin{equation*}
      \prod_{x\in \max I}(1-t^{-m(x)})
    \end{equation*}
    is obtained choosing a subset $S\subset \max I$ and taking
    \begin{equation*}
      \prod_{x\notin S} (-t^{-m(x)})=(-1)^{|\max I-S|}t^{-[\max I-S]}.
    \end{equation*}
    Therefore, the expression (\ref{eq:14}) for $\alpha(I)$ reduces to (\ref{eq:27}) as claimed.
  \end{proof}
\subsection{Explicit formula for the dimension}
\label{sec:formula-dimension}
Recall (from Section~\ref{sec:irred-invar-subsp}) that for each $(I,\phi)\in Q_\lambda$, $V_{I,\phi}$ denotes the unique irreducible $\Sp(K)$-invariant subspace of $L^2(A)$ which lies in $L^2(A)_{I,\phi}$ but not in any proper subspace of the form $L^2(A)_{I',\phi'}$.
We shall obtain a nice expression for $\dim V_{I,\phi}$ by applying Lemma~\ref{lemma:combinatorial} to the induced subposet of $P_\lambda$ given by
\begin{equation*}
  P_\lambda^+=\{(v,k)\in P_\lambda:v< (k-1)/2\}.
\end{equation*}
For each small order ideal $I\subset P_\lambda$, let $I^+=I^\perp\cap P_\lambda^+$.
Then $I\mapsto I^+$ is an order reversing isomorphism from the \poset{} of small order ideals in $P_\lambda$ to the \poset{} $J(P_\lambda^+)$ of all order ideals in $P_\lambda^+$.

  Let
  \begin{equation}
    \label{eq:21}
    V_I=\bigoplus_{\phi:\pi_0(I^\perp-I)\to \Z/2\Z} V_{I,\phi}.
  \end{equation}
  Denote by $V_I^0$ and $V_I^1$ the subspaces of even or odd functions in $V_I$ respectively.
\begin{lemma}
  \label{theorem:connected-dim}
  If $I\subset P_\lambda$ is a small order ideal, then for $\epsilon\in \{0,1\}$,
  \begin{equation*}
    \dim V_I^\epsilon=
    \begin{cases}
      (p^{[I^\perp-I]}+(-1)^\epsilon)/2 & \text{ if } I^+= \emptyset,\\
      p^{[I^\perp-I]}\prod_{x\in \max I^+}(1-p^{-2m(x)})/2 & \text{ otherwise.} 
    \end{cases}
  \end{equation*}
\end{lemma}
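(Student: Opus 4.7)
The plan is to compute $\dim V_I^0$ and $\dim V_I^1$ by determining their sum $\dim V_I$ and their signed difference $\dim V_I^0 - \dim V_I^1$ separately; each satisfies a recurrence over the small order ideals containing $I$ that can be inverted via Lemma~\ref{lemma:combinatorial}.

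First I would establish the decomposition
\[
L^2(A_{I^\perp}/A_I) = \bigoplus_{\substack{I' \supseteq I \\ I' \text{ small}}} V_{I'}.
\]
By the support-and-invariance description in Remark~\ref{remark:alternative}, an atom $V_{I',\phi'} \subseteq L^2(A_{I'^\perp}/A_{I'})$ sits inside $L^2(A_{I^\perp}/A_I)$ if and only if $A_{I'^\perp} \subseteq A_{I^\perp}$ and $A_I \subseteq A_{I'}$, both equivalent to $I \subseteq I'$. Taking dimensions on both sides yields $\sum_{I' \supseteq I \text{ small}} \dim V_{I'} = p^{[I^\perp - I]}$. Taking instead the trace of the involution $f \mapsto f(-\cdot)$, which acts on $V_{I'}^\epsilon$ by $(-1)^\epsilon$ (since on $V_{I',\phi'}$ it acts by the scalar $(-1)^{\sum_C \phi'(C)}$), together with the fact that $-1$ has exactly one fixed point on the odd-order group $A_{I^\perp}/A_I$, yields $\sum_{I' \supseteq I \text{ small}}(\dim V_{I'}^0 - \dim V_{I'}^1) = 1$.

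Next I would reparametrize via the order-reversing bijection $I \mapsto J = I^+$ between small order ideals of $P_\lambda$ and order ideals of $P_\lambda^+$. A small $I$ is forced to lie strictly below the horizontal symmetry axis of $P_\lambda$ (otherwise some element of $I$ and its reflection would both belong to $I$), so the reflection symmetry, which preserves multiplicities, yields the identity $[I^\perp - I] = 2[I^+] + [P_\lambda^0]$, where $P_\lambda^0$ denotes the midline of $P_\lambda$. With $J = I^+$, the two recurrences become
\[
\sum_{J' \subseteq J} p^{-[P_\lambda^0]} \dim V_{I(J')} = (p^2)^{[J]} \quad\text{and}\quad \sum_{J' \subseteq J}(\dim V_{I(J')}^0 - \dim V_{I(J')}^1) = 1^{[J]},
\]
so Lemma~\ref{lemma:combinatorial} applied to $P_\lambda^+$ with $t = p^2$ and $t = 1$ respectively inverts them to
\[
\dim V_I = p^{[I^\perp - I]}\prod_{x \in \max I^+}(1 - p^{-2m(x)}) \quad\text{and}\quad \dim V_I^0 - \dim V_I^1 = \prod_{x \in \max I^+}(1 - 1),
\]
the latter equalling $1$ when $I^+ = \emptyset$ (empty product) and $0$ otherwise.

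Finally, solving $\dim V_I^\epsilon = \tfrac{1}{2}(\dim V_I + (-1)^\epsilon(\dim V_I^0 - \dim V_I^1))$ yields the two cases of the statement: when $I^+ = \emptyset$ one obtains $(p^{[I^\perp-I]} + (-1)^\epsilon)/2$, and when $I^+ \neq \emptyset$ the difference vanishes and $\dim V_I^\epsilon$ reduces to half the product formula. I expect the main obstacle to be verifying the atomic decomposition of $L^2(A_{I^\perp}/A_I)$ cleanly and checking the bookkeeping identity $[I^\perp - I] = 2[I^+] + [P_\lambda^0]$; once both are in place, the remainder of the argument is routine Möbius inversion.
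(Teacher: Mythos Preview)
Your argument is correct and closely parallels the paper's proof for the computation of $\dim V_I$: both set up the decomposition $L^2(A_{I^\perp}/A_I)=\bigoplus_{I'\supseteq I}V_{I'}$, reparametrize small ideals by $I\mapsto I^+\in J(P_\lambda^+)$, verify $[I^\perp-I]=2[I^+]+[\text{midline}]$, and invoke Lemma~\ref{lemma:combinatorial} with $t=p^2$.

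Where you diverge is in separating the even and odd parts. The paper argues by induction on $|I^+|$: the base case $I^+=\emptyset$ is the direct count of even and odd functions on $A_{I^\perp}/A_I$, and the inductive step observes that in the decomposition the dimension discrepancy of~$1$ between even and odd functions is entirely absorbed by the $J^+=\emptyset$ summand, forcing $\dim V_I^0=\dim V_I^1$ for all other $I$. You instead compute the trace of the involution $f\mapsto f(-\cdot)$ on $L^2(A_{I^\perp}/A_I)$, obtaining $\sum_{I'\supseteq I}(\dim V_{I'}^0-\dim V_{I'}^1)=1$, and then apply Lemma~\ref{lemma:combinatorial} a second time with $t=1$ to read off the difference as the empty-or-zero product $\prod_{x\in\max I^+}(1-1)$. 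This is a genuine variation: it is more uniform (both the sum and the difference are handled by the same Möbius-inversion mechanism) and dispenses with the induction, at the cost of the extra trace computation, which is itself immediate since $|A|$ is odd. Both routes are short; yours has the aesthetic advantage of making the two cases of the formula emerge from the same lemma evaluated at two values of~$t$.
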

\begin{proof}
  Suppose $I\subset P_\lambda$ is a small order ideal.
  Then,
  \begin{equation}
    \label{eq:16}
    L^2(A_{I^\perp}/A_I)=\bigoplus_{J\supset I,\;J\text{ small}} V_J=\bigoplus_{J^+\subset I^+}V_J.
  \end{equation}
  Define $\alpha:J(P_\lambda^+)\to \C$ by $\alpha(J^+)=\dim V_J$.
  Comparing dimensions
  \begin{equation}
    \label{eq:15}
    \sum_{J^+\subset I^+} \alpha(J^+)=p^{[I^\perp-I]}.
  \end{equation}
  Let $E=\{(v,k)\in I^\perp-I|v=(k-1)/2\}$, the set of points in $I^\perp-I$ which lie on its axis of symmetry.
  Then $[I^\perp-I]=[E]+2[I^+]$.
  Therefore (\ref{eq:15}) becomes
  \begin{equation*}
    \sum_{J^+\subset I^+} \alpha(J^+)=p^{[E]}p^{2[I^+]}.
  \end{equation*}
  Taking $P=P_\lambda^+$ and setting $t=p^2$ in Lemma~\ref{lemma:combinatorial} gives
  \begin{eqnarray*}
    \dim V_I & = & p^{[E]+2[I^+]}\prod_{x\in \max I^+} (1-p^{-2m(x)})\\
    & = & p^{[I^\perp-I]}\prod_{x\in \max I^\perp}(1-p^{-2m(x)}).
  \end{eqnarray*}
  In order to obtain Lemma~\ref{theorem:connected-dim}, it remains to find the dimensions of the spaces of even and odd functions in $V_I$.
  If $I^+=\emptyset$ then $E=I^\perp-I$.
  In this case, $V_{I,\phi}$ is just the set of even or odd functions in $L^2(A_{I^\perp}/A_I)$ and has dimension as claimed.
  
  Otherwise, we proceed by induction on $I^+$.
  Thus assume that Lemma~\ref{theorem:connected-dim} holds for small order ideals $I'\supsetneq I$.
  The space of even functions in $L^2(A_{I^\perp}/A_I)$ has dimension one more than the space of odd functions.
  Breaking up the spaces in (\ref{eq:16}) into even and odd functions, we see this difference is accounted for by the summand corresponding to $J^+=\emptyset$, as discussed above. By the induction hypothesis, the dimensions of even and odd parts of the summands corresponding to $\emptyset\subsetneq J^+\subsetneq I^+$ are equal.
  Therefore, the even and odd parts of $V_I$ must have the same dimension.
\end{proof}
\begin{theorem}
  \label{theorem:dim}
  If $I\subset P_\lambda$ is a small order ideal, then
  \begin{equation*}
    \dim V_{I,\phi}=\prod_{C\in \pi_0(I^\perp-I)} \dim V_{I(C),\phi(C)},
  \end{equation*}
  where, since $I(C)^\perp-I(C)$ is connected, $\dim V_{I(C),\phi(C)}$ is given by Lemma~\ref{theorem:connected-dim}.
\end{theorem}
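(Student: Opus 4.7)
The plan is to exhibit $V_{I,\phi}$ as an external tensor product of the irreducibles $V_{I(C),\phi(C)}$ coming from the smaller symplectic groups $\Sp(K_C)$ acting on the factors $L^2(A_{I^\perp,C}/A_{I,C})$, then invoke Lemma~\ref{theorem:connected-dim} (which applies because each $I(C)^\perp - I(C) = C$ is a single connected component). The main ingredients are Corollary~\ref{cor:component-decomposition} and the multiplicity-freeness of Theorem~\ref{theorem:multiplicity-free}.

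First I would observe that Corollary~\ref{cor:component-decomposition} provides a projective equivalence of $\Sp(K)$-representations
\[
L^2(A_{I^\perp}/A_I) \;\simeq\; \bigotimes_{C \in \pi_0(I^\perp - I)} L^2(A_{I^\perp,C}/A_{I,C}),
\]
and, from Section~\ref{sec:deco-endo}, the image of $\Sp(K)$ in $\Sp(K(I))$ is the direct product of the images of the $\Sp(K_C)$'s. Under this identification, $L^2(A)_{I,\phi}$ corresponds by definition to $\bigotimes_C L^2(A_{I^\perp,C}/A_{I,C})_{\phi(C)}$. Because $I(C)^\perp - I(C) = (I^\perp - I) \cap P_{\lambda(C)} = C$ is a single connected component of its ambient poset, applying the construction of Theorem~\ref{theorem:main} inside $Q_{\lambda(C)}$ produces an irreducible $\Sp(K_C)$-subspace $V_{I(C),\phi(C)} \subset L^2(A_{I^\perp,C}/A_{I,C})_{\phi(C)}$.

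Next I would verify that $W := \bigotimes_{C} V_{I(C),\phi(C)}$ has the defining property of $V_{I,\phi}$ from Theorem~\ref{theorem:main}. External tensor products of irreducibles of different factors of a direct product of groups are irreducible, so $W$ is an irreducible constituent of $L^2(A)_{I,\phi}$. To rule out $W \subset L^2(A)_{I',\phi'}$ for any $(I',\phi') < (I,\phi)$, I would unfold Lemma~\ref{lemma:poset} componentwise: writing $I' = I \sqcup \bigsqcup_C L(C)$ as in Lemma~\ref{lemma:ideals-components}, the relation $(I',\phi') < (I,\phi)$ is equivalent to $(I'(C),\phi'(C)) \leq (I(C),\phi(C))$ in each $Q_{\lambda(C)}$ with strict inequality in at least one component. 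Thus under the tensor product decomposition $L^2(A)_{I',\phi'}$ becomes a product of componentwise subspaces which is strictly smaller in at least one factor, and in that factor $V_{I(C),\phi(C)}$ is not contained in the smaller subspace by its own defining property; hence $W \not\subset L^2(A)_{I',\phi'}$. Uniqueness from Theorem~\ref{theorem:main} then forces $V_{I,\phi} \simeq W$.

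Taking dimensions across the tensor product yields the stated formula, each factor being supplied by Lemma~\ref{theorem:connected-dim}. The main obstacle is the componentwise bookkeeping for the partial order: once Lemma~\ref{lemma:poset} is translated into the product language of the $Q_{\lambda(C)}$'s, the rest is a direct unpacking of definitions.
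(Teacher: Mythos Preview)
Your argument is correct and is exactly the reasoning the paper has in mind: the paper states Theorem~\ref{theorem:dim} without a separate proof, treating it as an immediate consequence of Corollary~\ref{cor:component-decomposition} together with the uniqueness clause of Theorem~\ref{theorem:main}. You have simply spelled out the implicit step, namely that $\bigotimes_{C} V_{I(C),\phi(C)}$ is an irreducible $\Sp(K)$-subspace of $L^2(A)_{I,\phi}$ not contained in any smaller $L^2(A)_{I',\phi'}$, and hence must coincide with $V_{I,\phi}$.
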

\subsection{Examples}
\label{sec:examples}
We begin with the case $A=(\Z/p^k\Z)^l$, corresponding to $\lambda=(k,\ldots,k)$ (repeated $l$ times).
$P_\lambda$ is then a linear order, with $k$ points.
$Q_\lambda$ has two linear components, consisting of the even and odd parts.
An informative way to display the decomposition of $L^2(A)$ is as the Hasse diagram of $Q_\lambda$, but with the dimension of the corresponding irreducible invariant subspace in place of each vertex.
In this case we get
\begin{equation*}
  \xymatrix{
    \frac{p^{lk}(1-p^{-2})}2 \ar@{-}[d] & \frac{p^{lk}(1-p^{-2})}2 \ar@{-}[d]\\
    \frac{p^{l(k-2)}(1-p^{-2})}2 \ar@{-}& \frac{p^{l(k-2)}(1-p^{-2})}2 \ar@{-}\\
    \vdots & \vdots\\
    \frac{p^{(k-2(\lfloor k/2\rfloor-1))}(1-p^{-2})}2\ar@{-}[d] & \frac{p^{(k-2(\lfloor k/2\rfloor-1))}(1-p^{-2})}2\ar@{-}[d]\\
    \frac{p^{(k-2\lfloor k/2 \rfloor)}+1}2 & \frac{p^{(k-2\lfloor k/2 \rfloor)}-1}2
  }
\end{equation*}
The entry at the bottom right is zero when $k$ is even and should be omitted.
This is consistent with the previously known results of Prasad \cite{MR1478492} and Cliff-McNeilly-Szechtman \cite{MR1783635}.
The picture for $\lambda=(2,1)$ is same as that for $\lambda=(3)$.
Perhaps the simplest non-trivial example is $\lambda=(3,1)$ (it is the smallest example where $J(P_\lambda)$ is not a chain). We get
\begin{equation*}
  \begin{array}{cc}
    \xymatrix{
      & \frac{p^4-p^2}2 \ar@{-}[dl] \ar@{-}[d]\\
      \frac{(p+1)^2}4 & \frac{(p-1)^2}4
    }&
    \xymatrix{
      \frac{p^4-p^2}2 \ar@{-}[d] \ar@{-}[dr]&\\
      \frac{p^2-1}4 & \frac{p^2-1}4
    }
  \end{array}
\end{equation*}
For $\lambda=(3,2,1)$, we get
\begin{equation*}
  \begin{array}{cc}
    \xymatrix{
      & \frac{p^6-p^4}2\ar@{-}[d]\\
      & \frac{p^4-p^2}2\ar@{-}[dl]\ar@{-}[d]\\
      \frac{(p+1)^2}4&\frac{(p-1)^2}4
    }
    &
    \xymatrix{
      \frac{p^6-p^4}2\ar@{-}[d]&\\
      \frac{p^4-p^2}2\ar@{-}[d]\ar@{-}[dr]&\\
      \frac{p^2-1}4&\frac{p^2-1}4
    }
  \end{array}
\end{equation*}
For $\lambda=(4,2)$, we have
\begin{equation*}
  \begin{array}{cc}
    \xymatrix{
      & \frac{p^6-p^4}2\ar@{-}[d]\\
      & \frac{p^4-2p^2+1}2\ar@{-}[dl]\ar@{-}[d]\\
      \frac{p^2-1}2\ar@{-}[dr]&\frac{p^2-1}2\ar@{-}[d]\\
      & 1
    }
    &
    \xymatrix{
      \frac{p^6-p^4}2\ar@{-}[d]&\\
      \frac{p^4-2p^2+1}2\ar@{-}[d]\ar@{-}[dr]&\\
      \frac{p^2-1}2&\frac{p^2-1}2\\
      &
    }
  \end{array}
\end{equation*}
For $\lambda=(4,3,2,1)$, we have
{\small
\begin{equation*}
  \begin{array}{cc}
    \xymatrix{
      & \frac{p^{10}-p^8}2\ar@{-}[d] &\\
      & \frac{p^8-p^6}2\ar@{-}[d] &\\
      & \frac{p^6-2p^4+p^2}2\ar@{-}[d]\ar@{-}[dl]\ar@{-}[dr] &\\
      \frac{p^4-p^2}2\ar@{-}[d]\ar@{-}[dr]&\frac{(p^3-p)(p+1)}4\ar@{-}[dl]
      &\frac{(p^3-p)(p-1)}4\ar@{-}[dl]\\
      \frac{(p+1)^2}4 & \frac{(p-1)^2}4&
    }
    &
    \xymatrix{
      & \frac{p^{10}-p^8}2\ar@{-}[d] &\\
      & \frac{p^8-p^6}2\ar@{-}[d] &\\
      & \frac{p^6-2p^4+p^2}2\ar@{-}[d]\ar@{-}[dl]\ar@{-}[dr] &\\
      \frac{(p^3-p)(p+1)}4\ar@{-}[dr]&\frac{(p^3-p)(p-1)}4
      \ar@{-}[dr]&\frac{p^4-p^2}2\ar@{-}[dl]\ar@{-}[d]\\
      &\frac{p^2-1}4 & \frac{p^2-1}4
    }
  \end{array}
\end{equation*}
}
\subsection{Projections onto the irreducible subspaces}
\label{sec:proj-onto-irred}
For each $(I,\phi)\in Q_\lambda$, let $E_{I,\phi}$ denote the projection operator onto $V_{I,\phi}$.
Recall from Lemma~\ref{lemma:onb} that the set of Weyl operators
\begin{equation*}
  \{W_k:k\in K\}
\end{equation*}
is an orthonormal basis of $\End_\C L^2(A)$.
Therefore, we may write
\begin{equation*}
  E_{I,\phi}=\sum_{k\in K} e_k(I,\phi)W_k
\end{equation*}
for some scalars $e_k(I,\phi)$.
The goal of this section is to show that this expansion is completely combinatorial.
More precisely, by Theorem~\ref{theorem:symplectic-orbits}, each $\Sp(K)$-orbit in $K$ corresponds to an order ideal in $P_\lambda$.
We shall show that if $k$ lies in the $\Sp(K)$-orbit corresponding to the order ideal $J$, then $e_k(I,\phi)$ is a polynomial in $p$ whose coefficients depend only on the combinatorial data $I$, $\phi$, and $J$.

In Section~\ref{sec:relat-comm} we saw that
\begin{equation*}
  \{\Delta_L:L\in J(P_\lambda)\}
\end{equation*}
is a basis of $\End_{\Sp(K)}L^2(A)$.
Therefore, we may write
\begin{equation*}
  E_{I,\phi}=\sum_{L\subset P_\lambda} \alpha_L(I,\phi) \Delta_L,
\end{equation*}
for some constants $\alpha_L(I,\phi)$.
If $k$ lies in the orbit corresponding to $J$ then
\begin{equation*}
  e_k(I,\phi)=\sum_{L\supset J}\alpha_L(I,\phi).
\end{equation*}
Therefore, it suffices to show that the $\alpha_L(I,\phi)$ are polynomials in $p$ whose coefficients are determined by the combinatorial data $L$, $I$, and $\phi$ (Theorem~\ref{theorem:alpha-qualitative}).
In fact, Theorems~\ref{theorem:alpha-supp} and~\ref{theorem:alpha-exact} compute $\alpha_L(I,\phi)$ explicitly.

To begin with, consider the case where $I^\perp-I$ is connected.
If $E_I$ is the projection operator onto $V_I$ (defined by (\ref{eq:21})), then by \ref{item:1},
\begin{equation*}
  |A|^{-1}p^{[I^\perp-I]}\Delta_I = \sum_{J^+\subset I^+} E_J.
\end{equation*}
Using M\"obius inversion for a finite distributive lattice as in Section~\ref{sec:combinatorial-lemma},
\begin{equation*}
  |A|E_I=\sum_{I^+-\max I^+\subset J^+\subset I^+} (-1)^{|I^+-J^+|}p^{[J^\perp-J]}\Delta_J.
\end{equation*}
Since $V_{I,\phi}$ consists of even or odd functions in $V_I$ (depending on whether $\phi(I^\perp-I)$ is $0$ or $1$), by (\ref{eq:22}), $E_{I,\phi}$ is given by
\begin{equation*}
  E_{I,\phi}=E_I(\mathrm{id}_{L^2(A)}+(-1)^{\phi(I^\perp-I)} |A|^{-1}\Delta_{P_\lambda})/2.
\end{equation*}
By Lemma~\ref{lemma:product},
\begin{equation*}
  (|A|^{-1}p^{[J^\perp-J]}\Delta_J)(|A|^{-1}\Delta_{P_\lambda})=|A|^{-1}\Delta_{J^\perp}.
\end{equation*}
Therefore when $I^\perp-I$ is connected 
\begin{equation}
  \label{eq:23}
  2|A|E_{I,\phi}=\sum_{I^+-\max I^+\subset J^+\subset I^+}(-1)^{|I^+-J^+|}(p^{[J^\perp-J]}\Delta_J+(-1)^{\phi(I^\perp-I)}\Delta_{J^\perp}).
\end{equation}

Now take $I\subset P_\lambda$ to be any small order ideal.
The decomposition (\ref{eq:18}) gives
\begin{equation*}
  L^2(A)=\bigotimes_{C\in \tilde\pi_0(I^\perp-I)} L^2(A_C)
\end{equation*}
and
\begin{equation*}
  V_{I,\phi}=\Big(\bigotimes_{C\in \pi_0(I^\perp-I)} V_{I(C),\phi(C)}\Big)\otimes L^2(A_{I^\perp(0)}/A_{I(0)}),
\end{equation*}
the last factor being one dimensional (since $I(0)=I^\perp(0)$).
So we have
\begin{equation}
  \label{eq:25}
  E_{I,\phi}=\Big(\bigotimes_{C\in \pi_0(I^\perp-I)} E_{I(C),\phi(C)}\Big)\otimes \Delta_{I(0)},
\end{equation}
where, since $I(C)^\perp-I(C)$ is connected, $E_{I(C),\phi(C)}$ is determined by (\ref{eq:23}).
A typical term in the expansion (\ref{eq:25}) will be of the form
\begin{equation}
  \label{eq:26}
  \Big(\bigotimes_{C\in \pi_0(I^\perp-I)}\Delta_{L(C)}\Big)\otimes \Delta_{I(0)},
\end{equation}
where, for each $C\in \pi_0(I^\perp-I)$, $I(C)\subset L(C)\subset I^\perp(C)$ with either $L(C)$ or $L(C)^\perp$ is a small order ideal in $P_{\lambda(C)}$.
But this is just $\Delta_L$, where
\begin{equation}
  \label{eq:24}
  L=I\bigcup \Big(\coprod_{C\in \pi_0(I^\perp-I)}L(C)\Big),
\end{equation}
is an order ideal in $P_\lambda$ by Lemma~\ref{lemma:ideals-components}.
We have the qualitative result
\begin{theorem}
  \label{theorem:alpha-qualitative}
  For each $(I,\phi)\in Q_\lambda$, $2^{|\pi_0(I^\perp-I)|}|A|\alpha_L(I,\phi)$ is a polynomial in $p$ whose coefficients are integers which depend only on the combinatorial data $I$, $\phi$ and $L$.
\end{theorem}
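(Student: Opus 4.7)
The strategy is to substitute the explicit connected-case formula (\ref{eq:23}) into the tensor-product factorization (\ref{eq:25}) and read off the coefficient of each $\Delta_L$. For every $C \in \pi_0(I^\perp - I)$ the induced poset $I(C)^\perp - I(C)$ is connected, so (\ref{eq:23}) applied to the Weil representation on $L^2(A_C)$ writes $2|A_C|\,E_{I(C),\phi(C)}$ as a $\Z[p]$-linear combination of $\Delta_J$'s, indexed by $J^+$ with $I(C)^+ - \max I(C)^+ \subset J^+ \subset I(C)^+$; every coefficient is of the form $\pm p^{[J^\perp - J]}$ or $\pm(-1)^{\phi(C)}$, and all data entering these coefficients is manifestly combinatorial in $I(C)$, $\phi(C)$, and $J$.

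Substituting into (\ref{eq:25}) and expanding multilinearly, each summand has the shape (\ref{eq:26}). By Lemma~\ref{lemma:ideals-components} together with the identification $L^2(A) = \bigotimes_{C \in \tilde\pi_0(I^\perp-I)}L^2(A_C)$, such a summand equals $\Delta_L$ for $L = I \sqcup \bigsqcup_C L(C)$ as in (\ref{eq:24}). Conversely, given an order ideal $L \subset P_\lambda$, the pieces $L(C) = L \cap P_{\lambda(C)}$ are uniquely determined by $L$; the only freedom in each component is whether the contribution came from the term with $J = L(C)$ or with $J^\perp = L(C)$ in (\ref{eq:23}) (at most one summand of (\ref{eq:23}) gives each, subject to the $J^+$-constraint). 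Consequently, the coefficient of $\Delta_L$ in $\prod_C 2|A_C|\cdot E_{I,\phi}$ is a sum of at most $2^{|\pi_0(I^\perp-I)|}$ products, each product ranging over components and contributing a $\pm p^{[L(C)^\perp - L(C)]}$ or $\pm(-1)^{\phi(C)}$ factor. This sum is an integer polynomial in $p$ whose coefficients are determined solely by $I$, $\phi$, and $L$.

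It remains to verify the scalar. Each connected-case factor contributes $(2|A_C|)^{-1}$ from (\ref{eq:23}). The factor $\Delta_{I(0)}$ in (\ref{eq:25}), interpreted as the genuine projection $|A_{I(0)}|^{-2}\Delta_{I(0)}$ onto the one-dimensional subspace of $L^2(A_0)$ of functions constant on $A_{I(0)} = A_{I^\perp(0)}$ and supported there, contributes $|A_{I(0)}|^{-2}$. Because $I(0) = I^\perp(0)$ as order ideals in $P_{\lambda(0)}$, one has $|A_{I(0)}|^2 = |A_0|$, so the total normalization is
\[
\frac{1}{|A_{I(0)}|^{2}}\prod_{C \in \pi_0(I^\perp - I)}\frac{1}{2|A_C|} = \frac{1}{|A_0|\cdot 2^{|\pi_0(I^\perp-I)|}\prod_{C \in \pi_0(I^\perp-I)}|A_C|} = \frac{1}{2^{|\pi_0(I^\perp-I)|}|A|}.
\]
Thus $2^{|\pi_0(I^\perp-I)|}|A|\,\alpha_L(I,\phi)$ is the integer polynomial identified above, proving the theorem.

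The main obstacle is this normalization bookkeeping, particularly the proper interpretation of the $\Delta_{I(0)}$ factor in (\ref{eq:25}) and the identity $|A_{I(0)}|^2 = |A_0|$ which absorbs the $S_0$-contribution to $|A|$. Once this is in place, integrality and combinatorial dependence are essentially automatic from the structure of (\ref{eq:23}). The refinements in Theorems~\ref{theorem:alpha-supp} and~\ref{theorem:alpha-exact} should then be obtained by solving the admissibility constraints on $J^+$ explicitly in terms of $L(C)$.
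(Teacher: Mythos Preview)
Your argument is correct and follows essentially the same route as the paper: apply the connected-case formula (\ref{eq:23}) componentwise, substitute into the tensor factorization (\ref{eq:25}), and identify each resulting summand (\ref{eq:26}) with a single $\Delta_L$ via Lemma~\ref{lemma:ideals-components}. Your explicit normalization bookkeeping for the $\Delta_{I(0)}$ factor (interpreting it as the projection $|A_{I(0)}|^{-2}\Delta_{I(0)}$ and using $|A_{I(0)}|^2=|A_0|$ from $I(0)=I(0)^\perp$) is in fact more careful than the paper, which writes $\Delta_{I(0)}$ without comment; and your bound of at most $2^{|\pi_0(I^\perp-I)|}$ contributing products is not sharp (the paper observes each $L$ arises at most once), but this is harmless for the qualitative claim.
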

Let $I_L=L\cap L^\perp$.
Examining (\ref{eq:23}) more carefully gives
\begin{theorem}
  \label{theorem:alpha-supp}
  The coefficient $\alpha_L(I,\phi)$ is non-zero if and only if the following conditions hold:
  \begin{enumerate}
  \item \label{item:5} For each $C\in \pi_0(I^\perp-I)$, either $L(C)$ or $L(C)^\perp$ is a small order ideal in $P_{\lambda(C)}$.
  \item \label{item:6} $I^+-\max I^+\subset I_L^+\subset I^+$.
  \end{enumerate}
\end{theorem}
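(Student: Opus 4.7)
The plan is to compute $\alpha_L(I,\phi)$ by expanding the factorization (\ref{eq:25}) via the connected-case formula (\ref{eq:23}) and reading off when the coefficient of $\Delta_L$ is nonzero. Applying (\ref{eq:23}) inside each tensor factor $E_{I(C),\phi(C)}$ of (\ref{eq:25}) yields a sum indexed by pairs $(J(C),\epsilon(C))$ for each $C\in\pi_0(I^\perp-I)$, where $J(C)$ is a small order ideal in $P_{\lambda(C)}$ satisfying $I(C)^+-\max I(C)^+\subset J(C)^+\subset I(C)^+$, and $\epsilon(C)\in\{0,1\}$ selects either $L(C)=J(C)$ (with scalar $p^{[J(C)^\perp-J(C)]}$) or $L(C)=J(C)^\perp$ (with scalar $(-1)^{\phi(C)}$); both scalars are nonzero. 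Each admissible tuple produces the single basis element $\Delta_L$ with $L=I\cup\coprod_C L(C)$ as in (\ref{eq:24}), and its coefficient is the product of the individual scalars, multiplied by the M\"obius-inversion sign and the factor from $\Delta_{I(0)}$.

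Next I would translate this per-component admissibility into the intrinsic conditions stated in the theorem. Condition (1) is immediate from the shape $L(C)\in\{J(C),J(C)^\perp\}$ with $J(C)$ small. For condition (2), the key step is to verify that the annihilator operation is compatible with the decomposition $A=\prod_{C\in\tilde\pi_0} A_C$: the identity $A_{L(C)}^\perp=\hat A_{L(C)^\perp}$ inside $\hat A_C$ implies $L^\perp\cap P_{\lambda(C)}=L(C)^\perp$ (the perp taken inside $P_{\lambda(C)}$), and hence $I_L\cap P_{\lambda(C)}=L(C)\cap L(C)^\perp=J(C)$ under condition (1). Likewise $I^+$ decomposes as $\coprod_{C\in\tilde\pi_0}I(C)^+$, and a convexity argument for connected components of $I^\perp-I$ in $P_\lambda$ (any chain in $P_\lambda$ between elements of a component $C$ must remain inside $C$, since $I^\perp$ is an order ideal and $I$ is an order subideal of it) gives $\max I^+\cap P_{\lambda(C)}=\max I(C)^+$ for each $C\in\pi_0(I^\perp-I)$. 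Combining these decompositions, condition (2) becomes the conjunction over $C\in\pi_0(I^\perp-I)$ of $I(C)^+-\max I(C)^+\subset J(C)^+\subset I(C)^+$, which matches the admissibility range in (\ref{eq:23}); the trivial component $C=0$ contributes no constraint because $L(0)=I(0)=I^\perp(0)$.

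Finally, to conclude that $\alpha_L\neq 0$ precisely when (1) and (2) hold, I would argue that the assignment $((J(C),\epsilon(C)))_C\mapsto L$ is injective on admissible tuples: distinct tuples give distinct $L$'s since $J(C)=L(C)\cap L(C)^\perp$ recovers $J(C)$ and $\epsilon(C)$ is determined by whether $L(C)$ or $L(C)^\perp$ is the small one. Hence no cross-cancellation between distinct tuples occurs, and the coefficient of each $\Delta_L$ is a single product of the nonzero scalars above. The main technical obstacle is the convexity assertion for connected components of $I^\perp-I$, which underlies the clean per-component decomposition of $\max I^+$; once that is in hand, the equivalence between the local summation range in (\ref{eq:23}) and the global conditions (1)--(2) reduces to careful bookkeeping with the annihilator involution.
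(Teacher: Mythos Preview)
Your approach is essentially the same as the paper's: expand $E_{I,\phi}$ via (\ref{eq:25}) and (\ref{eq:23}), argue that each order ideal $L$ arises from at most one admissible tuple so that no cancellation occurs, and then translate the per-component summation range into the global conditions using $\max I^+=\coprod_C \max I(C)^+$. You supply more detail than the paper does---the compatibility of $(\,\cdot\,)^\perp$ with the product decomposition and the convexity argument behind the identity $\max I^+\cap P_{\lambda(C)}=\max I(C)^+$---but these simply make explicit what the paper asserts in a single clause.
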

\begin{proof}
  For $\alpha_L(I,\phi)$ to be non-zero, it is necessary that $L$ be of the form (\ref{eq:24}) for some order ideals $L(C)$ of $P_{\lambda(C)}$ which occur in the right hand side of (\ref{eq:23}).
  Furthermore, since each order ideal in $P_{\lambda(C)}$ appears at most once in the right hand side of (\ref{eq:23}), so each order ideal in $P_\lambda$ appears only once in the expansion (\ref{eq:25}).
  In particular, no cancellation is possible, and for all such ideals $\alpha_L(I,\phi)\neq 0$.

  Now $L(C)$ appears on the right hand side of (\ref{eq:23}) if and only if \ref{item:5} holds, and $I(C)^+-\max I(C)^+\subset I_L(C)^+\subset I(C)^+$.
Since $\max I^+=\coprod_C \max I(C)^+$, this amounts to the condition \ref{item:6}.
\end{proof}
If these conditions do hold, then for each $C'\in \pi_0(I_L^\perp-I_L)$ there exists $C\in \pi_0(I^\perp-I)$ such that $C'\subset C$.
Furthermore, $\phi_L(C')$ depends only on $C$, so we may denote its value by $\phi_L(C)$.
For $I=I(C)$, the right hand side of (\ref{eq:23}) can be written as
\begin{equation*}
  \sum_{L(C)}(-1)^{|I(C)^+-L(C)^+|+\phi(C)\phi_L(C)}p^{[I_L(C)^\perp-L(C)]},
\end{equation*}
the sum being over an appropriate set of order ideals $L(C)\subset P_{\lambda(C)}$.
Write $\langle \phi_1,\phi_2\rangle$ for $\sum_{C\in \pi_0(I^\perp-I)} \phi_1(C)\phi_2(C)$ for any functions $\phi_i:\pi_0(I^\perp-I)\to \Z/2\Z$.
The additive nature of the exponents in the above expression allows us to get an exact expression of $\alpha_L(I,\phi)$:
\begin{theorem}
  \label{theorem:alpha-exact}
  If an order ideal $L\subset P_\lambda$ satisfies the conditions of Theorem~\ref{theorem:alpha-supp}, then
  \begin{equation*}
    2^{|\pi_0(I^\perp-I)|}|A|\alpha_L(I,\phi)=(-1)^{|I^+-I_L^+|+\langle\phi,\phi_L\rangle}p^{[I_L^\perp-L]}.
  \end{equation*}
\end{theorem}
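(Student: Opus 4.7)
The plan is to compute $\alpha_L(I,\phi)$ directly from the tensor-product formula~(\ref{eq:25}), substituting (\ref{eq:23}) into each factor $E_{I(C),\phi(C)}$, and then identifying which term of the distributed expansion is proportional to $\Delta_L$. The groundwork has essentially been done in the paragraph preceding the theorem; what remains is to unify the case analysis and aggregate exponents across the components.

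First, I would expand the right-hand side of~(\ref{eq:25}) by substituting (\ref{eq:23}) for each $C\in\pi_0(I^\perp-I)$ and distributing the tensor product. Each resulting term is indexed by a choice, for every $C$, of a small ideal $J(C)\subset P_{\lambda(C)}$ with $I(C)^+-\max I(C)^+\subset J(C)^+\subset I(C)^+$ and an $\epsilon(C)\in\{0,1\}$ telling us whether to take $\Delta_{J(C)}$ or $\Delta_{J(C)^\perp}$. By Lemma~\ref{lemma:ideals-components} and the decomposition of Section~\ref{sec:deco-endo}, the tensor of these $\Delta_{L(C)}$'s with $\Delta_{I(0)}$ equals $\Delta_L$ where $L=I\cup\coprod_C L(C)$ and $L(C)\in\{J(C),J(C)^\perp\}$. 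This gives a bijection between the distributed terms and the ideals $L$ meeting conditions \ref{item:5} and \ref{item:6} of Theorem~\ref{theorem:alpha-supp}, with each such $L$ appearing exactly once and with no cancellation.

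Next, I would rewrite the per-component contribution uniformly in terms of $I_L(C)$ and $\phi_L(C)$. When $L(C)$ is small, one has $I_L(C)=L(C)$, $\phi_L(C)=0$, and (\ref{eq:23}) contributes $(-1)^{|I(C)^+-L(C)^+|}p^{[L(C)^\perp-L(C)]}$, which matches $(-1)^{|I(C)^+-I_L(C)^+|+\phi(C)\phi_L(C)}p^{[I_L(C)^\perp-L(C)]}$. When $L(C)^\perp$ is small, one has $I_L(C)=L(C)^\perp$, $\phi_L(C)=1$, and (\ref{eq:23}) contributes $(-1)^{|I(C)^+-I_L(C)^+|+\phi(C)}$ with no $p$-factor; but $I_L(C)^\perp-L(C)=L(C)-L(C)=\emptyset$ in this case, so the same closed form applies. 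Thus the two asymmetric branches of (\ref{eq:23}) collapse into the single expression $(-1)^{|I(C)^+-I_L(C)^+|+\phi(C)\phi_L(C)}p^{[I_L(C)^\perp-L(C)]}$.

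Finally, I would multiply the contributions across $C\in\pi_0(I^\perp-I)$ and note that the $C=0$ factor is $\Delta_{I(0)}$ with trivial sign and trivial $p$-exponent (since $I(0)=I^\perp(0)$ forces $L(0)=I(0)=I_L(0)$). Using the disjoint-union identities $I^+=\coprod_C I(C)^+$, $I_L^+=\coprod_C I_L(C)^+$, and $I_L^\perp-L=\coprod_C(I_L(C)^\perp-L(C))$, the sign exponents add to $|I^+-I_L^+|+\langle\phi,\phi_L\rangle$ and the $p$-exponents to $[I_L^\perp-L]$. Pulling the prefactors of (\ref{eq:23}) onto the left, namely $\prod_{C\in\pi_0(I^\perp-I)}2|A_C|$ times the factor $|A_0|$ absorbed into $\Delta_{I(0)}$, gives exactly $2^{|\pi_0(I^\perp-I)|}|A|$ (since $|A|=\prod_{C\in\tilde\pi_0(I^\perp-I)}|A_C|$), yielding the claimed identity. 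The main obstacle is the bookkeeping in the second step: verifying that the two cases of (\ref{eq:23}), which look asymmetric because only one carries a $p^{[J^\perp-J]}$, nevertheless merge into a single compact expression. Once the vanishing of $I_L^\perp-L$ on the non-small-$L(C)$ components is noticed, the remaining aggregation is routine.
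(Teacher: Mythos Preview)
Your proposal is correct and follows essentially the same route as the paper: rewrite each factor of the tensor-product formula~(\ref{eq:25}) via (\ref{eq:23}), unify the two branches $\Delta_J$ and $\Delta_{J^\perp}$ into the single per-component expression $(-1)^{|I(C)^+-I_L(C)^+|+\phi(C)\phi_L(C)}p^{[I_L(C)^\perp-L(C)]}$, and then sum the exponents across components. The paper compresses all of this into one displayed line and the phrase ``the additive nature of the exponents,'' whereas you spell out the case analysis (small $L(C)$ versus small $L(C)^\perp$) and the bookkeeping for the normalizing constant $2^{|\pi_0(I^\perp-I)|}|A|$; in particular your remark about the factor $|A_0|$ attached to $\Delta_{I(0)}$ correctly accounts for the fact that the last tensor factor in (\ref{eq:25}) must be the \emph{projection} $|A_0|^{-1}\Delta_{I(0)}$ for $E_{I,\phi}$ to be idempotent.
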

\section{Finite modules over a Dedekind domain}
\label{sec:finite-modules}
Let $F$ be a non-Archime\-dean local field with ring of integers $R$.
Let $P$ denote the maximal ideal of $R$.
Assume that the residue field $R/P$ is of odd order $q$.
Fix a continuous character $\psi:F\to U(1)$ whose restriction to $R$ is trivial, but whose restriction to $P^{-1}R$ is not (see, for example, Tate's thesis \cite{MR0217026}).
Then if $\psi_x(y)=\psi(xy)$, the map $x\mapsto \psi_x$ is an isomorphism of $F$ into $\hat F$.
Under this isomorphism, $R$ has image $R^\perp=\widehat{F/R}$.
More generally, $P^{-n}$ has image $(P^n)^\perp=\widehat{F/P^n}$ for every integer $n$ (recall that for positive $n$, $P^{-n}$ is the set of elements $x\in F$ such that $x P^n\in R$).
Thus, it gives rise to an isomorphism $P^{-n}/R\to \widehat{R/P^n}$ for each positive integer $n$.
Since $P^{-n}/R$ inherits the structure of an $R$-module, this isomorphism also allows us to think of $\widehat{R/P^n}$ as an $R$-module.
Now suppose $A$ is a finitely generated torsion module over $R$.
Then
\begin{equation}
  \label{eq:13}
  A=R/P^{\lambda_1}\times\cdots\times R/P^{\lambda_l}
\end{equation}
for a unique partition $\lambda$.
By the discussion above, $\hat A$ is also an $R$-module (non-canonically isomorphic to $A$).
Let $K=A\times \hat A$, and $\Sp(K)$ be as in Section~\ref{theorem:SvN}.
Define $\Sp_R(K)$ to be the subgroup of $\Sp(K)$ consisting of $R$-module automorphisms.

The Weil representation of $\Sp_R(K)$ is simply the restriction of the Weil representation of $\Sp(K)$ on $L^2(A)$ to $\Sp_R(K)$.
All the theorems and proofs in this article concerning finite abelian $p$-groups generalize to the Weil representation of $\Sp_R(K)$ on $L^2(A)$, so long as $p$ is replaced by $q$ in the formulas.
Since every finitely generated torsion module over a Dedekind domain is a product of its primary components, and module automorphisms respect the primary decomposition, the reduction in Section~\ref{sec:prim-decomp} works for finite modules of odd order over Dedekind domains.

Singla \cite{Poojareps,1101.3696} has proved that the representation theory of $G(R/P^2)$, where $G$ is a classical group, depends on $R$ only through $q$, the order of the residue field.
More precisely, if $R$ and $R'$ are two discrete valuation rings and an isomorphism between their residue fields is fixed (for example, take $R=\Z_p$, the ring of $p$-adic integers, and $R'=(\Z/p\Z)[[t]]$, the ring of Laurent series with coefficients in $\Z/p\Z$), then there is a canonical bijection between the irreducible representations of $G(R/P^2)$ and $G(R'/P^2)$ which preserves dimensions.
There is also a canonical bijection between their conjugacy classes which preserves sizes.
All existing evidence points towards the existence of a similar correspondence automorphism groups of modules of type $\lambda$ (see, for example \cite[Conjecture~1.2]{MR2456275}).
The results in this paper also point in the same direction: for each partition $\lambda$, there is a canonical correspondence between the invariant subspaces of the Weil representations associated to the finitely generated torsion $R$-module of type $\lambda$ and the finitely generated torsion $R'$-module of type $\lambda$ which preserves dimensions.
\subsection*{Acknowledgment}
We thank Uri Onn for some helpful comments on a preliminary version of this paper. We thank the referee for some suggestions to improve the readability of this paper.
Kunal Dutta was a student at the Institute of Mathematical Sciences, Chennai when this paper was written.
He thanks the Institute for its hospitality and support.
\def\cprime{$'$}

\end{document}